\newtheorem{thm}{Theorem}[section]
\newtheorem{lem}[thm]{Lemma}
\newtheorem{prop}[thm]{Proposition}
\newtheorem{ques}[thm]{Question}
\newtheorem{cor}[thm]{Corollary}
\newtheorem{defn}[thm]{Definition}
\theoremstyle{remark}
\newtheorem{rem}[thm]{Remark}
\newcommand{\inv}{invariant}
\newcommand{\eps}{\varepsilon}
\newcommand{\im}{invariant measure}
\newcommand{\sq}{sequence}
\newcommand{\z}{\mathbb Z}
\newcommand{\na}{\mathbb N}
\newcommand{\M}{\mathcal M}
\newcommand{\A}{\mathcal A}
\newcommand{\B}{\mathcal B}
\newcommand{\tl}{topological}
\newcommand{\zd}{zero-dimensional}
\newcommand{\CC}{\mathcal{C}}
\newcommand{\CT}{\mathcal{T}}
\newcommand{\CS}{\mathcal{S}}
\numberwithin{equation}{section}
\begin{document}

\baselineskip=12pt
\title{The comparison property of amenable groups}

\author{Tomasz Downarowicz}
\address{Faculty of Pure and Applied Mathematics, Wroc\l aw University of Science and Technology, Wybrze\.ze Wyspia\'nskiego 21, 50-370 Wroc\l aw, Poland}
\email{Tomasz.Downarowicz@pwr.edu.pl}

%\author{Maxence Phalempin}
%\address{Departament de Math\'ematiques, \'Ecole normale sup\'erieure de Rennes, Campus de Kerr Lann,
%Avenue Robert Schuman, 35170 Bruz, France}
%\email{Maxence.Phalempin@ens-rennes.fr}

\author{Guohua Zhang}
\address{School of Mathematical Sciences and Shanghai Center for Mathematical Sciences, Fudan University, Shanghai 200433, China}
\email{chiaths.zhang@gmail.com}

\begin{abstract}
Let a countable amenable group $G$ act on a \zd\ compact metric space $X$. For two clopen subsets $\mathsf A$ and $\mathsf B$ of $X$ we say that $\mathsf A$ is \emph{subequivalent} to $\mathsf B$ (we write $\mathsf A\preccurlyeq \mathsf B$), if there exists a finite partition $\mathsf A=\bigcup_{i=1}^k \mathsf A_i$ of $\mathsf A$ into clopen sets and there are elements $g_1,g_2,\dots,g_k$ in $G$ such that $g_1(\mathsf A_1), g_2(\mathsf A_2),\dots, g_k(\mathsf A_k)$ are disjoint subsets of $\mathsf B$. We say that the action \emph{admits comparison} if for any clopen sets $\mathsf A, \mathsf B$, the condition, that for every $G$-\inv\ probability measure $\mu$ on $X$ we have the sharp inequality $\mu(\mathsf A)<\mu(\mathsf B)$, implies $\mathsf A\preccurlyeq \mathsf B$. Comparison has many desired consequences for the action, such as the existence of tilings with arbitrarily good F\o lner properties, which are factors of the action. Also, the theory of symbolic extensions, known for $\z$-actions, extends to actions which admit comparison. We also study a purely group-theoretic notion of comparison: if every action of $G$ on any \zd\ compact metric space admits comparison then we say that $G$ has the \emph{comparison property}. Classical groups $\z$ and $\z^d$ enjoy the comparison property, but in the general case the problem remains open. In this paper we prove this property for groups whose every finitely generated subgroup has subexponential growth.
\end{abstract}

\thanks{The first author is supported by the NCN (National Science Center, Poland) Grant 2013/08/A/ST1/00275.
The second author is supported by NSFC (National Natural Science Foundation of China) Grants
11671094, 11722103 and 11731003.}

\date{\today}

 \maketitle
\section{Introduction}
The key notion of this paper, the \emph{comparison} originates in the theory of \mbox{$C^*$-al}geb\-ras, but the most important for us ``dynamical'' version concerns group actions on compact spaces. In this setup it was defined by J.~Cuntz (see \cite{Cu}) and further investigated by M.~R\o rdam in \cite{MR1,MR2} and by W.~Winter in \cite{W}. As in the case of many other properties and notions in dynamical systems, the most fundamental form of comparison occurs in actions of the additive group $\z$ of the integers. In this context comparison is guaranteed for any action on a \zd\ compact metric space, which follows from the classical marker property of such actions (see \cite{Bo}). See also \cite{B} for more on comparison in $\z$-actions.
For a wider generality, we refer the reader to a recent paper by David Kerr \cite{K}, where the notion is defined for other actions including \tl\ and measure-preserving ones. We will focus on a particular case where a countable amenable group acts on a \zd\ compact metric space. In fact, this case also plays one of the leading roles in \cite{K}.

The main motivation for this paper is the fact that, unlike for $\z$-actions, in the case of a general countable amenable group acting on a \zd\ compact metric space, it is unknown whether comparison necessarily occurs. There is neither a proof, nor a counterexample, although the problem has been attacked by several specialists for several years. Only a few partial results have been obtained, for instance, it is known (but never published, see \cite{Ph} and also \cite{SG}) that finitely generated groups with a symmetric F\o lner \sq\ satisfying Tempelman's condition (this includes all nilpotent, in particular Abelian, groups) have the comparison property, but beyond this case not much was known. On the other hand, comparison is a very desirable property with many important consequences (see further in this introduction), thus any progress in understanding which actions enjoy comparison (or which groups have comparison for all actions) is valuable.

Our main invention introduced in this paper is a new notion of a \emph{correction chain}---a kind of pseudoorbit which allows to improve a partially defined map and extend its domain.
Using this tool in section \ref{cztery} we succeed in identifying a large class of groups whose any action on a \zd\ compact metric space has comparison. Namely, it is the class of groups whose every finitely generated subgroup has subexponential growth (we call them shortly \emph{subexponential groups}). This covers all nilpotent and in fact virtually nilpotent groups (which have polynomial growth) but also other, with intermediate growth, the most known example of which is the Grigorchuk group (\cite{G}). By a recent result of Breuillard, Green and Tao \cite{BGT}, our result also covers the above mentioned ``Tempelman groups''; they turn out to be virtually nilpotent.
Of course, there exist also amenable groups with exponential growth, and for these the problem remains a challenge.
\smallskip

The last section of the paper is devoted to the connection between comparison and the existence of what we call \emph{dynamical tilings} with arbitrarily good ``F\o lner properties''. Such dynamical tilings, which exist in any aperiodic action of $\z$, have numerous applications in ergodic theory and occur under various names (as Kakutani--Rokhlin partitions or clopen tower partitions, etc.) for example in the study of full groups and orbit equivalence of minimal Cantor systems (see \cite{Sl} for an exposition on this subject). For amenable group actions, for a long time, quasitilings of Ornstein and Weiss (see \cite{OW1}) have played a crucial role, mainly due to their universal existence in all countable amenable groups, and Lindenstrauss' Pointwise Ergodic Theorem (\cite{L}) is one of the most important applications. There are many more such applications, see for example \cite{DZ,FT,HYZ,PS}. However, the Ornstein--Weiss quasitilings are ``algebraic'' (i.e., unrelated to any \emph{a priori} given action).

In \cite{DHZ}, it has been proved that in the algebraic case the Ornstein--Weiss quasitilings (with arbitrarily good F\o lner properties) can be improved to become tilings. Such tilings have already found numerous applications, see e.g. \cite{D,S,Z,ZCY}. In fact, the advantage of (algebraic) tilings over (algebraic) quasitilings is visible also in this paper: these tilings are used in the proof of the key Lemma \ref{key}, where quasitilings would not work.

As mentioned above, it is often desired to have a tiling (or at least a quasitiling) which depends on the \emph{a priori} given action. In \cite{DH} it is proved that dynamical quasitilings with arbitrarily good F\o lner properties exist \emph{as factors} in any free action of any countable amenable group. In this version the result has already been used in \cite{FH, FH1}. In a recent paper \cite{C-T} we find a different approach: a dynamical tiling (in place of quasitiling) is obtained as \emph{a factor of an extension} of a given free minimal action. In the same paper, these tilings are applied to establish some kind of stability for generic actions, using the language of $C^*$-algebras (see also the survey \cite{W1}).

But for many other purposes all the above discussed quasitilings and tilings are insufficient. Dynamical tilings which are factors of a given action are needed for instance to build the theory of symbolic extensions, and neither dynamical quasitilings nor tilings which are factors of some \emph{extended} action seem to be sufficient. This problem will be discussed in detail in our forthcoming paper \cite{DoZ}.

As we have already mentioned, in the general case the existence of a dynamical tiling which is a factor of a given free action (on a \zd\ compact metric space) is unknown. In the last section, we tie the existence of such dynamical tilings with the comparison property. In particular, using the result concerning the comparison property of subexponential groups, we prove that if the group is subexponential, then any free action on a \zd\ compact metric space factors to dynamical tilings with arbitrarily good F\o lner properties. We also prove the reversed implication: an action (not necessarily free), which factors to dynamical tilings with arbitrarily good F\o lner properties, admits comparison.

\medskip
The authors thank Gabor Szabo for valuable information on the current state of the art in the subject matter.
\section{Preliminaries}
In this paper, whenever we say ``a finite set'' we mean a nonempty finite set, and whenever we say ``a countable set'' we mean either a finite or an infinite countable set. Since we will often consider pairs of subsets of a group $G$ as well as pairs of subsets of a compact space $X$, for easier distinction we will use the convention that in the first case these sets will be denoted using slanted font: $A,B\subset G$, and in the second using straight sans serif font: $\mathsf A,\mathsf B\subset X$.
Boldface letters $\mathbf A,\mathbf B$ are reserved for blocks in symbolic systems, while script $\A,\B$ will be used for families of blocks.

\subsection{Amenable groups and their actions}

Let $G$ be a countable group.
\begin{defn}
A \sq\ $(F_n)_{n\in\na}$ of finite subsets of $G$ is called a (left) \emph{F\o lner \sq} if for any $g\in G$ one has
\[
\lim_{n\to\infty}\frac{|gF_n\cap F_n|}{|F_n|}=1,
\]
where $|\cdot|$ denotes the cardinality of a set.
\end{defn}

Equivalently, a sequence of finite sets $(F_n)$ is F\o lner if and only if for every finite set $K\subset G$ and every $\varepsilon>0$ the sets $F_n$ are eventually \emph{$(K,\varepsilon)$-invariant}, i.e., satisfy
$$
\frac{|KF_n\triangle F_n|}{|F_n|}<\varepsilon
$$
($\triangle$ stands for the symmetric difference of sets).

\begin{defn}
A countable group possessing a F\o lner \sq\ is called \emph{amenable}.
\end{defn}

The above is just one of many equivalent definitions of amenability, applicable to countable groups. For more general definitions and properties see for example \cite{P}.
In particular, it is known that a subgroup of an amenable group is amenable.

\medskip
Let $X$ be a \tl\ space. We say that a group $G$ \emph{acts on $X$} if there is a group homomorphism
$\tau:G\to\mathsf{HOMEO}(X,X)$ of $G$ into the group of self-homeomorphisms of $X$. If an action of $G$ on $X$ is understood, it is customary to write $g(x)$ instead of $\tau(g)(x)$.
By the \emph{orbit} of a point $x\in X$ we will mean the set $G(x)=\{g(x):g\in G\}$.
It is a basic property of amenability (and in fact a condition equivalent to it) that if $G$ is amenable then for any action of $G$ on any compact metric space there exists a Borel probability measure $\mu$ on $X$ invariant under the action in the following sense: if $\mathsf A\subset X$ is a Borel set then $\mu(\mathsf A)=\mu(g(\mathsf A))$ for every $g\in G$. We will briefly call such $\mu$ an \emph{\im} (skipping the adjectives ``Borel'' and ``probability''). The collection of all \im s $\M_G(X)$ endowed with the weak-star topology is a compact convex set whose extreme points are precisely the ergodic measures (i.e., such that each Borel-measurable invariant set has measure either zero or one). If $(F_n)$ is any F\o lner \sq\ in $G$ then, for every point $x\in X$, the sequence of atomic measures
$$
\frac1{|F_n|}\sum_{g\in F_n}\delta_{g(x)}
$$
(where $\delta_x$ denotes the point-mass at $x$) accumulates at the set of \im s. If this \sq\ converges to some $\mu$ then $\mu$ is necessarily invariant and we call $x$ \emph{a generic point for $\mu$}. If $(F_n)$ is \emph{tempered}, then for every ergodic measure $\mu$ the set of all points generic for $\mu$ has full measure (see \cite{L} for the definition of the notion ``tempered'' and for the theorem).
\smallskip

The most basic example of an action is the \emph{full shift on a finite alphabet}. Let $\Lambda$ be a finite set (called \emph{the alphabet}) and let $\Lambda^G$ be endowed with the product topology, where the set $\Lambda$ is considered discrete. The group $G$ acts on $\Lambda^G$ by the \emph{shifts} defined as follows: if $x=(x_g)_{g\in G}$ and $h\in G$ then $h(x)=y=(y_g)_{g\in G}$, where, for each $g\in G$, $y_g = x_{gh}$.

By a \emph{subshift} we will understand the action of $G$ on any nonempty closed shift-\inv\ subset $X$ of $\Lambda^G$. If $F\subset G$ is finite then any function $\mathbf B:F\to\Lambda$ is called a \emph{block} over $F$. With each block $\mathbf B$ (over any finite $F$) we associate the \emph{cylinder}
$$
[\mathbf B]=\{x\in \Lambda^G: x|_F=\mathbf B\}.
$$
If $F=\{e\}$ (where $e$ denotes the unity of $G$) and $\mathbf B(e) = \alpha\in\Lambda$
($\mathbf B(g)$ denotes the entry of $\mathbf B$ at the coordinate $g$) then the cylinder $[\mathbf B]$ will be denoted by $[\alpha]$. We say that a block $\mathbf B$ (over some $F$) \emph{occurrs} in some $x\in \Lambda^G$ \emph{at the position $g$} if $g(x)\in[\mathbf B]$, equivalently, if $x_{fg}=\mathbf B(f)$ for every $f\in F$. If we restrict our attention to a subshift $X\subset \Lambda^G$, by the cylinder $[\mathbf B]$
we will understand what should be formally denoted as $[\mathbf B]\cap X$. The collection of all cylinders (corresponding to all blocks over all finite sets) is a clopen base of the topology in $X$.

If $G$ acts on two compact metric spaces, $X$ and $Y$, we will say that the action on $Y$ is a \emph{\tl\ factor} of the action on $X$ if there exists a continuous surjection $\pi:X\to Y$ such that, for every $g\in G$, $g\circ\pi = \pi\circ g$  (where $g$ is understood as a transformation of either $X$ or $Y$).
\smallskip

The property of a group action on a \tl\ space which generalizes that of aperiodicity for $\z$-actions
(i.e., lack of periodic points) is freeness. There are several (not equivalent) definitions of a free group action. We will use the strongest:
\begin{defn}
An action $\tau:G\to\mathsf{HOMEO}(X,X)$ is called \emph{free} if for every $g\in G$
$$
(\exists\,x\in X : g(x)=x) \text{ \ implies \ }g=e.
$$
\end{defn}

\subsection{Subexponential groups}

\begin{defn}
In a group $G$, a set $R$ such that $\bigcup_{n=1}^\infty (R\cup R^{-1})^n=G$ is called a \emph{generator} of $G$. A group having a finite generator is called \emph{finitely generated}.
\end{defn}

\begin{defn}
A finitely generated group $G$ with a generator $R$ has \emph{subexponential growth} if $|(R\cup R^{-1})^n|$ grows subexponentially, i.e.,
$$
\lim_{n\to\infty}\frac1n\log|(R\cup R^{-1})^n|=0.
$$
\end{defn}

It is very easy to see that subexponential growth of a finitely generated group $G$ implies subexponential growth of $|K^n|$ for any finite set $K\subset G$ and thus does not depend on the choice of a finite generator.

\begin{defn}
A countable group $G$ (not necessarily finitely generated) is called \emph{subexponential} if every its finitely generated subgroup has subexponential growth.
\end{defn}

It is a standard fact that a group $G$ is amenable if and only if so is every finitely generated subgroup of $G$. It is also known that finitely generated groups with subexponential growth are amenable \cite{AS}, hence every subexponential group is amenable. This is why we can omit amenability assumption when dealing with subexponential groups. Examples of subexponential groups are: Abelian, nilpotent and virtually nilpotent groups. These examples have polynomial growth, but there are also examples of countable groups with intermediate growth rates (see \cite{G}). By a recent result \cite{BGT}, all finitely generated groups, which admit an increasing \sq\ of sets $(A_n)_{n\in\na}$ with $G=\bigcup_{n=1}^\infty A_n$ and $|A_n^2|<C|A_n|$ for some constant $C>0$, are virtually nilpotent and hence subexponential. In particular, this applies to finitely generated groups possessing a symmetric F\o lner \sq\ $(F_n)$ satisfying Tempelman's condition $|F_n^{-1}F_n|\le C|F_n|$.

\subsection{Upper and lower Banach densities, Banach density advantage}

\begin{defn}\label{1.7}
For a subset $B\subset G$ and a finite set $F\subset G$ denote
\[
\underline D_F(B)=\inf_{g\in G} \frac{|B\cap Fg|}{|F|}\text{ \ and \ } \overline D_F(B)=\sup_{g\in G} \frac{|B\cap Fg|}{|F|}.
\]
If $(F_n)$ is a F\o lner sequence then define
\[
\underline D(B)=\limsup_{n\to\infty} \underline D_{F_n}(B)\text{ \ and \ }
\overline D(B)=\liminf_{n\to\infty} \overline D_{F_n}(B),
\]
which we call the \emph{lower} and \emph{upper Banach density} of $B$, respectively.

\begin{rem}
The notions of upper and lower Banach density have been studied from several points of view.
For example, in \cite{BBF} the reader will find a different definition. It can be shown that that definition is in fact equivalent to ours.
\end{rem}

For two sets $A$ and $B$ of $G$ we define the following quantities
\[
\underline D_F(B,A)=\inf_{g\in G} \frac1{|F|}(|B\cap Fg|-|A\cap Fg|), \ \ \
\underline D(B,A)=\limsup_{n\to\infty} \underline D_{F_n}(B,A).
\]
The latter number will be called the \emph{Banach density advantage} of $B$ over $A$ (which can be negative, but we will never consider such a case).
\end{defn}

We will be using the following elementary fact (see e.g. \cite[Lemma 3.4]{DHZ}, where it is formulated using the language of quasitilings which in this paper will be introduced later).

\begin{lem}\label{1.5}
Let $(A_k)_{k\ge 1}$ and $(g_k)_{k\ge 1}$ be a sequence of subsets of $G$ and a \sq\ of elements of $G$ such that:
\begin{enumerate}
	\item the union $\bigcup_{k=1}^\infty A_k$ is finite,
	\item $A=\bigcup_{k=1}^\infty A_kg_k$ is a disjoint union.
\end{enumerate}
For each $k$ let $B_k\subset A_k$ and let $B=\bigcup_{k=1}^\infty B_kg_k$. Then
$$
\underline D(B)\ge\underline D(A)\cdot\inf_k\frac{|B_k|}{|A_k|}.
$$
\end{lem}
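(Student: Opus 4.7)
Let $\alpha=\inf_k\frac{|B_k|}{|A_k|}$ (we may assume $\alpha>0$, else there is nothing to prove) and set $K=\bigcup_k A_k$, which is finite by assumption (i). Fix a F\o lner \sq\ $(F_n)$. The plan is to show the uniform pointwise bound
\[
|B\cap F_ng|\ge \alpha\,|A\cap F_ng|-|K|\cdot|F_n\setminus\{h:KK^{-1}h\subset F_n\}|
\]
for every $g\in G$, and then pass to the $\limsup$.

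Fix $n$ and $g\in G$. Call a tile $A_kg_k$ \emph{good} if $A_kg_k\subset F_ng$, and \emph{bad} if $A_kg_k\cap F_ng\neq\emptyset$ but $A_kg_k\not\subset F_ng$. Since the family $\{A_kg_k\}$ is disjoint, we have
\[
|A\cap F_ng|=\sum_{\text{good}}|A_k|+\sum_{\text{bad}}|A_kg_k\cap F_ng|
\le\sum_{\text{good}}|A_k|+\sum_{\text{bad}}|A_k|.
\]
For the set $B=\bigsqcup_k B_kg_k\subset A$ we trivially have $B_kg_k\subset F_ng$ whenever $A_kg_k\subset F_ng$, so
\[
|B\cap F_ng|\ge\sum_{\text{good}}|B_k|\ge\alpha\sum_{\text{good}}|A_k|\ge\alpha\bigl(|A\cap F_ng|-\textstyle\sum_{\text{bad}}|A_k|\bigr).
\]

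The main point is to control the number of bad tiles. Each bad tile admits an element $b_k\in A_kg_k\cap F_ng$ and an element $c_k\in A_kg_k\setminus F_ng$; writing $b_k=ag_k$ and $c_k=a'g_k$ with $a,a'\in A_k\subset K$ gives $c_k\in(KK^{-1})b_k$. Hence $b_k$ lies in the \emph{inner $KK^{-1}$-boundary}
\[
\partial(F_ng):=F_ng\setminus\{h\in G: KK^{-1}h\subset F_ng\},
\]
and since the $A_kg_k$ are disjoint, the assignment (bad tile)$\mapsto b_k$ is injective. Therefore the number of bad tiles is at most $|\partial(F_ng)|=|\partial(F_n)|$ (by left-invariance of cardinality under right translation), which is $o(|F_n|)$ by the F\o lner property applied to the finite set $KK^{-1}$. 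Consequently $\sum_{\text{bad}}|A_k|\le|K|\cdot o(|F_n|)$, uniformly in $g$.

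Dividing the displayed inequality by $|F_n|$ and taking the infimum over $g$ yields
\[
\underline D_{F_n}(B)\ge\alpha\,\underline D_{F_n}(A)-|K|\cdot\frac{|\partial(F_n)|}{|F_n|},
\]
and passing to $\limsup_{n\to\infty}$ (where the last term vanishes) gives the desired $\underline D(B)\ge\alpha\,\underline D(A)$. The main care needed is just that the boundary correction is uniform in the translate $g$, which is why we phrased the boundary via $KK^{-1}$ and used the left-invariance argument above.
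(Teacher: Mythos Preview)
Your proof is correct. The paper does not actually prove this lemma; it merely cites \cite[Lemma 3.4]{DHZ} as an ``elementary fact,'' so there is nothing to compare against directly. Your argument via good/bad tiles and the inner $KK^{-1}$-boundary is the standard one and is essentially what one finds in \cite{DHZ}: the key points are (a) disjointness of the tiles makes the map from bad tiles to boundary points injective, (b) the boundary size $|\partial(F_n)|$ is independent of the right translate $g$ because $\partial(F_ng)=\partial(F_n)g$, and (c) the F\o lner property applied to the finite set $KK^{-1}$ forces $|\partial(F_n)|/|F_n|\to 0$. One cosmetic remark: the phrase ``left-invariance of cardinality under right translation'' is a bit garbled---what you use is simply that right translation by $g$ is a bijection, together with the identity $\partial(F_ng)=\partial(F_n)g$, which you verified correctly.
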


The following lemma will be repeatedly used in many of our considerations.
\begin{lem}\label{bdc}
Let $F, F_1$ be finite subsets of $G$ and let $A,B$ be some arbitrary subsets of $G$.
If $F_1$ is $(F,\eps)$-\inv\ then $\underline D_{F_1}(B,A)\ge \underline D_F(B,A)-4\eps$.
\end{lem}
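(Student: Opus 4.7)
The plan is to exploit the definition of $\underline D_F(B,A)$ by summing the guaranteed one-set bound over all translates of $F$ that fit inside a given translate of $F_1$, then trade the sum for the quantity we want via Fubini on the double sum and control the error using invariance. Fix an arbitrary $g_1\in G$; the goal is to show that
\[
|B\cap F_1 g_1|-|A\cap F_1 g_1|\ge\bigl(\underline D_F(B,A)-4\eps\bigr)|F_1|,
\]
after which one takes the $\limsup$ over a F\o lner sequence of $(F,\eps)$-invariant $F_1$'s and then infimum over $g_1$.

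First I would write $d=\underline D_F(B,A)$, so that $|B\cap Fg|-|A\cap Fg|\ge d|F|$ for every $g\in G$. Summing over $g\in F_1 g_1$ and swapping the order of summation via the identity
\[
\sum_{g\in F_1 g_1}|C\cap Fg|=\sum_{f\in F}|C\cap fF_1 g_1|\qquad(C\in\{A,B\}),
\]
(which is a one-line Fubini: $fg\in C$ and $g\in F_1 g_1$ iff $fg\in C\cap fF_1 g_1$), turns the lower bound $d|F||F_1|$ into an inequality comparing $\sum_{f\in F}|B\cap fF_1 g_1|$ and $\sum_{f\in F}|A\cap fF_1 g_1|$.

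Next I would use the $(F,\eps)$-invariance of $F_1$ to replace $fF_1 g_1$ by $F_1 g_1$ at small cost. Concretely, $|FF_1\setminus F_1|<\eps|F_1|$ gives $|fF_1\setminus F_1|<\eps|F_1|$ for each $f\in F$, and since $|fF_1|=|F_1|$ also $|F_1\setminus fF_1|<\eps|F_1|$, so $|fF_1\triangle F_1|<2\eps|F_1|$. Translating by $g_1$ preserves these bounds, whence
\[
|B\cap fF_1 g_1|\le|B\cap F_1 g_1|+2\eps|F_1|,\qquad |A\cap fF_1 g_1|\ge|A\cap F_1 g_1|-2\eps|F_1|.
\]

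Summing these over $f\in F$ and combining with the previous display yields
\[
d|F||F_1|\le|F|\bigl(|B\cap F_1 g_1|-|A\cap F_1 g_1|\bigr)+4\eps|F||F_1|,
\]
and dividing by $|F||F_1|$ and taking the infimum over $g_1\in G$ gives the stated inequality $\underline D_{F_1}(B,A)\ge\underline D_F(B,A)-4\eps$. There is no real obstacle here beyond careful bookkeeping of the two error contributions (one from $B$ where $fF_1 g_1$ can stick out past $F_1 g_1$, one from $A$ where $F_1 g_1$ can stick out past $fF_1 g_1$), each of size at most $2\eps|F_1|$ per $f$, which together produce the $4\eps$ slack.
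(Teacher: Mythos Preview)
Your argument is correct and follows essentially the same route as the paper: sum the defining inequality for $\underline D_F(B,A)$ over $g\in F_1g_1$, apply Fubini to rewrite this as a sum over $f\in F$ of $|B\cap fF_1g_1|-|A\cap fF_1g_1|$, and then use $(F,\eps)$-invariance (in the form $|fF_1\triangle F_1|\le 2\eps|F_1|$) to replace $fF_1g_1$ by $F_1g_1$ at total cost $4\eps|F_1|$. The paper's only cosmetic difference is that, after Fubini, it invokes averaging to pick a single $f\in F$ with $|B\cap fF_1g|-|A\cap fF_1g|\ge \underline D_F(B,A)|F_1|$ and then corrects that one term, whereas you bound every term and sum; the arithmetic is identical.

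One small slip in your framing: after establishing the displayed inequality you should simply take the infimum over $g_1$ and divide by $|F_1|$ to obtain $\underline D_{F_1}(B,A)\ge\underline D_F(B,A)-4\eps$; there is no $\limsup$ over a F\o lner sequence in this lemma (that step belongs to the next lemma, where this one is applied).
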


\begin{proof}
Given $g\in G$, we have
$$
|B\cap Fhg|-|A\cap Fhg|\ge \underline D_F(B,A)|F|,
$$
for every $h\in F_1$. This implies that
\begin{multline*}
|\{(f,h): f\in F, h\in F_1, fhg\in B\}| - |\{(f,h): f\in F, h\in F_1, fhg\in A\}| \ge\\
\underline D_F(B,A)|F||F_1|.
\end{multline*}
This in turn implies that there exists at least one $f\in F$ for which
$$
|B\cap fF_1g|-|A\cap fF_1g|\ge \underline D_F(B,A)|F_1|.
$$
Since $f\in F$ and $F_1$ is $(F,\eps)$-invariant (and hence so is $F_1g$), we have
$$
\bigl||B\cap fF_1g|-|B\cap F_1g|\bigr|\le |fF_1\triangle F_1|=2|fF_1\setminus F_1|\le2|FF_1\setminus F_1|\le2\eps|F_1|,
$$
and the same for $A$, which yields
\begin{equation}
|B\cap F_1g|-|A\cap F_1g|\ge (\underline D_F(B,A)-4\eps)|F_1|.
\end{equation}
To end the proof, it remains to apply the infimum over all $g\in G$ on the left, and divide both sides by $|F_1|$.
\end{proof}

The first two equalities in the lemma below have been proved in \cite{DHZ}.

\begin{lem}\label{bd}
The values of $\underline D(B)$, $\overline D(B)$ and $\underline D(B,A)$
do not depend on the choice of the F\o lner sequence, the limits superior and inferior in the definition are in fact limits, and moreover
\begin{align*}
\underline D(B) &= \sup_F\ \underline D_F(B),\\
\overline D(B) &= \,\inf_F\ \overline D_F(B),\\
\underline D(B,A) &= \sup_F\ \underline D_F(B,A),
\end{align*}
where $F$ ranges over all finite subsets of $G$.
\end{lem}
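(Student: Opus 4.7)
The plan is to reduce everything to the third equality, since the first two are cited from \cite{DHZ}; the third then implies that $\underline D(B,A)$ is independent of the F\o lner sequence and that the limsup in its definition is actually a limit. The whole argument will rest on Lemma \ref{bdc}, which plays exactly the role of a monotonicity statement with respect to the F\o lner ordering.

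First I would establish the lower bound. Fix a finite $F\subset G$ and $\eps>0$. Since $(F_n)$ is F\o lner, the sets $F_n$ are eventually $(F,\eps)$-\inv, so Lemma \ref{bdc} gives $\underline D_{F_n}(B,A)\ge \underline D_F(B,A)-4\eps$ for all sufficiently large $n$. Taking $\liminf$ in $n$ and then letting $\eps\to 0$ yields
\[
\liminf_{n\to\infty}\underline D_{F_n}(B,A)\ge \underline D_F(B,A).
\]
Since $F$ was arbitrary, this produces
\[
\liminf_{n\to\infty}\underline D_{F_n}(B,A)\ge \sup_F\ \underline D_F(B,A).
\]

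Next I would note the trivial upper bound: every $F_n$ is itself a finite subset of $G$, so $\underline D_{F_n}(B,A)\le \sup_F\underline D_F(B,A)$ for every $n$, whence
\[
\limsup_{n\to\infty}\underline D_{F_n}(B,A)\le \sup_F\ \underline D_F(B,A).
\]
Combining the two bounds forces $\liminf$ and $\limsup$ to coincide and to equal $\sup_F\underline D_F(B,A)$. Hence the limit exists, equals the supremum, and, crucially, the right-hand side does not involve $(F_n)$, proving independence of the F\o lner sequence. The same two-sided argument, applied with the obvious modifications, recovers the first two identities as well, but these are already available from \cite{DHZ}.

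I do not anticipate a serious obstacle: the content of the lemma is essentially a repackaging of Lemma \ref{bdc}, and the only thing to watch is that one uses $\liminf$ (not $\limsup$) in the lower-bound step, so that after letting $\eps\to 0$ one can squeeze it against the trivial $\limsup$ upper bound and conclude existence of the limit.
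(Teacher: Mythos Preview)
Your proposal is correct and essentially identical to the paper's proof: both establish the trivial upper bound $\limsup_n\underline D_{F_n}(B,A)\le\sup_F\underline D_F(B,A)$ and then use Lemma \ref{bdc} together with eventual $(F,\eps)$-invariance of $F_n$ to obtain $\liminf_n\underline D_{F_n}(B,A)\ge\underline D_F(B,A)-4\eps$, letting $\eps\to 0$ and taking the supremum over $F$. The only cosmetic difference is that the paper derives the first two equalities from the third (plug in $A=\emptyset$, then pass to complements) rather than citing \cite{DHZ} directly.
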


\begin{proof}
We will prove the third equation. Then, plugging in $A=\emptyset$ we will get the first equation and passing to the complement $B^c$ we will get the second equation. The inequality
$\limsup_{n\to\infty}\underline D_{F_n}(B,A)\le\sup\{\underline D_F(B,A): F\subset G, F \text{ is finite}\}$
is obvious. It remains to show that
$$
\liminf_{n\to\infty}\underline D_{F_n}(B,A)\ge\sup\{\underline D_F(B,A): F\subset G, F \text{ is finite}\}.
$$
At the same time this will prove the existence of all three limits.

Let $F\subset G$ be a finite set. Given $\eps>0$, for any $n$ large enough $F_n$ is $(F,\eps)$-invariant, hence Lemma \ref{bdc}, implies that $\liminf_{n\to \infty} \underline D_{F_n}(B,A)\ge \underline D_F(B,A)-4\eps$. Since $\eps>0$ is arbitrary, it can be ignored.
\end{proof}

\begin{cor}\label{coro}We have
$$
\underline D(B)-\overline D(A)\le \underline D(B,A).
$$
\end{cor}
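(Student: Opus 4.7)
The plan is to derive the inequality from a trivial pointwise version and then pass to the limit along a F\o lner sequence, using that the limits involved genuinely exist (Lemma \ref{bd}).

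First I would observe the following elementary fact. Fix a finite $F\subset G$ and $g\in G$. By the definitions of $\underline D_F(B)$ and $\overline D_F(A)$ we have
\[
\frac{|B\cap Fg|}{|F|}\ge \underline D_F(B) \quad \text{and} \quad \frac{|A\cap Fg|}{|F|}\le \overline D_F(A),
\]
so subtracting,
\[
\frac{|B\cap Fg|-|A\cap Fg|}{|F|}\ge \underline D_F(B)-\overline D_F(A).
\]
Since this holds for every $g\in G$, taking the infimum over $g$ on the left yields the pointwise inequality
\[
\underline D_F(B,A)\ge \underline D_F(B)-\overline D_F(A).
\]

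Next I would apply this to any F\o lner sequence $(F_n)$ in place of $F$ and let $n\to\infty$. By Lemma \ref{bd}, each of the three quantities $\underline D_{F_n}(B,A)$, $\underline D_{F_n}(B)$, $\overline D_{F_n}(A)$ actually converges (not merely as a $\liminf$ or $\limsup$) to $\underline D(B,A)$, $\underline D(B)$, $\overline D(A)$ respectively. Passing to the limit in the pointwise inequality above therefore gives
\[
\underline D(B,A)\ge \underline D(B)-\overline D(A),
\]
which is exactly the stated corollary.

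There is essentially no obstacle here: the corollary is a formal consequence of the definitions plus the fact (already established in Lemma \ref{bd}) that the relevant quantities are genuine limits along any F\o lner sequence. Had Lemma \ref{bd} not supplied the existence of the limits, one would have needed an extra step replacing a single $F$ by a sufficiently invariant $F_1$ (via Lemma \ref{bdc} applied both to $\underline D$ and, by passing to complements, to $\overline D$), but with Lemma \ref{bd} in hand this is unnecessary.
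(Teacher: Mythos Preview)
Your argument is correct and is essentially identical to the paper's own proof: both establish the elementary inequality $\underline D_{F_n}(B,A)\ge \underline D_{F_n}(B)-\overline D_{F_n}(A)$ (from $\inf_g(f(g)-h(g))\ge\inf_g f(g)-\sup_g h(g)$) and then invoke Lemma~\ref{bd} to pass to the limit along a F\o lner sequence. Your write-up is just slightly more explicit about the intermediate steps.
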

\begin{proof} Fix a F\o lner \sq\ $(F_n)$. By the above lemma, we can write
\begin{multline*}
\underline D(B,A)= \lim_{n\to\infty} \ \inf_{g\in G} \frac{|B\cap F_ng|-|A\cap F_ng|}{|F_n|}\ge\\
\lim_{n\to\infty} \ \inf_{g\in G} \frac{|B\cap F_ng|}{|F_n|}- \lim_{n\to\infty} \ \sup_{g\in G}\frac{|A\cap F_ng|}{|F_n|}= \underline D(B)-\overline D(A).
\end{multline*}
\end{proof}

\subsection{Tilings of amenable groups}

In this section we will briefly recall the notion of a quasitiling and tiling of a group and we will quote from \cite{DHZ} the result on the existence of tilings, with arbitrarily good F\o lner properties, of countable amenable groups. These notions and the result will not be used until Section~\ref{cztery},
but due to their generality we put them in the preliminaries. Later, in Section~\ref{piec} we will also introduce the notion of dynamical quasitilings and tilings (and some results on their existence). In contrast to dynamical (quasi)tilings, the (quasi)tilings of this subsection can be regarded as ``static'' or ``algebraic''.

\begin{defn}\label{quasi} A \emph{quasitiling} $\CT$ of a group $G$ is determined by two objects:
\begin{enumerate}
	\item a finite collection $\CS(\CT)$ of finite subsets of $G$ containing the unity $e$,
	called \emph{the shapes};
	\item a finite collection $\CC(\CT) = \{C(S):S\in\CS(\CT)\}$ of disjoint subsets of $G$, called \emph{sets of centers} (for the shapes).
\end{enumerate}
The quasitiling is then the family $\CT=\{(S,c):S\in\CS(\CT),\ c\in C(S)\}$. We require the map $(S,c)\mapsto Sc$ to be injective. Hence, by the \emph{tiles} of \,$\CT$ (denoted by the letter $T$) we will mean either the sets $Sc$ or the pairs $(S,c)$ (i.e., the tiles with defined centers), depending on the context.
\end{defn}
Note that every quasitiling $\CT$ can be represented in a symbolic form, as a point $\CT\in\Delta^G$, with the alphabet $\Delta = \CS(\CT)\cup\{0\}$, as follows: $\CT_g=S \iff g\in C(S)$, $\CT_g=0$ otherwise.

\begin{defn}\label{qt} Let $\eps\in[0,1)$ and $\alpha\in(0,1]$ and let $K\subset G$ be a finite set. A quasitiling $\CT$ is called
\begin{enumerate}
	\item \emph{$(K,\eps)$-invariant} if all shapes of $\CT$ are \emph{$(K,\eps)$-invariant};
	\item \emph{$\eps$-disjoint} if there exists a mapping $T\mapsto T^\circ$ (\,$T\in\CT$) such that
	\begin{itemize}
	\item $T^\circ\subset T$, $\frac{|T^\circ|}{|T|}>1-\eps$ and
	\item $T\neq T'\implies T^\circ\cap {T'}^\circ=\emptyset$;
	\end{itemize}
    \item \emph{disjoint} if the tiles of $\CT$ are pairwise disjoint;
	\item \emph{$\alpha$-covering} if $\underline D(\bigcup\CT)\ge\alpha$;
	\item a \emph{tiling} if it is a partition of $G$.
\end{enumerate}
\end{defn}

One of the most fruitful facts in ergodic theory (as well as \tl\ dynamics) of amenable group actions is the following fact due to Ornstein and Weiss (\cite[Proposition 4]{OW}), which can be reformulated as follows:

\begin{thm}
Let $G$ be a countable amenable group. Then, for any finite set $K$ and any $\eps,\delta,\gamma>0$, there exists a $(K,\eps)$-\inv, $(1-\delta)$-covering, $\gamma$-disjoint quasitiling of $G$.
\end{thm}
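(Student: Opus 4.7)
The plan is to carry out the classical Ornstein--Weiss iterative construction. I first fix a positive integer $N$ with $(1-\gamma)^N<\delta/2$ and a small auxiliary parameter $\eta\in(0,1)$. Exploiting amenability, I then select a nested sequence $\{e\}\subset F_1\subset F_2\subset\cdots\subset F_N$ of finite subsets of $G$, each $(K,\eps)$-invariant, and such that each $F_{i+1}$ is $(F_iF_i^{-1},\eta)$-invariant; such a sequence exists because a Følner \sq\ eventually satisfies any given finite list of $(K,\eta)$-invariance conditions. The shapes of the quasitiling will be exactly $\CS(\CT)=\{F_1,\dots,F_N\}$, so the $(K,\eps)$-invariance of the quasitiling is automatic from the outset.

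Next I construct the centers by backward greedy induction on $i=N,N-1,\dots,1$. Put $V_{N+1}=\emptyset$. At stage $i$, enumerate the elements of $G\setminus\bigcup_{j>i}C(F_j)$ as $g_1,g_2,\dots$ and process them in order: accept $g_k$ into $C(F_i)$ iff the current union $W$, consisting of $V_{i+1}$ together with all tiles $F_ic$ already placed at level $i$, satisfies $|F_ig_k\cap W|<\gamma|F_i|$. Set $V_i=V_{i+1}\cup\bigcup_{c\in C(F_i)}F_ic$. For each tile $T=F_ic$ define its core $T^\circ=T\setminus W_c$, where $W_c$ is the covered region immediately before $c$ was accepted; the acceptance criterion forces $|T^\circ|/|T|>1-\gamma$, and distinct cores are disjoint by construction. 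This establishes the $\gamma$-disjointness of $\CT$.

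It remains to verify the covering property $\underline D(V_1)\ge 1-\delta$, which is the most delicate part. Writing $U_i=G\setminus V_i$, the greedy rejection rule forces every $g\in U_i$ to satisfy $|F_ig\cap V_i|\ge\gamma|F_i|$, equivalently $|F_ig\cap U_i|\le(1-\gamma)|F_i|$. A Fubini-style double count of the pairs $(f,g)$ with $f\in F_i$, $g\in U_i\cap L$, and $fg\in V_i$ (for $L$ sufficiently $(F_i,\eta)$-invariant) then converts this pointwise bound into a one-step density decrement
\[
\overline D(U_i)\le(1-\gamma)\,\overline D(U_{i+1})+O(\eta),
\]
where Lemma~\ref{bdc} is the precise vehicle for passing between distinct finite averaging sets and the inclusion $U_i\subset U_{i+1}$ is used to link successive stages. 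Iterating $N$ times gives $\overline D(U_1)\le(1-\gamma)^N+O(N\eta)<\delta$ once $\eta$ is chosen small enough, and then $\underline D(V_1)=1-\overline D(U_1)\ge 1-\delta$, as required. The main obstacle here is precisely the density decrement: the boundary mismatch $F_iL\setminus L$ in the double count must be absorbed at each of the $N$ stages, which is exactly why the shapes $F_i$ are chosen to be not only $(K,\eps)$-invariant but mutually very Følner in the strong sense imposed above.
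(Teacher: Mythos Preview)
The paper does not give a proof of this theorem; it is quoted as a reformulation of Ornstein--Weiss \cite[Proposition~4]{OW} and used as a black box. So there is nothing in the paper to compare your argument against. Your outline is the classical Ornstein--Weiss greedy construction, and its architecture (choose $N$ with $(1-\gamma)^N<\delta/2$, pick nested mutually-F\o lner shapes, place tiles top-down with the $\gamma$-overlap rule, read off $\gamma$-disjointness, iterate a density decrement) is the right one.

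There is, however, a real gap in the density-decrement step. The double count you write down --- pairs $(f,g)$ with $f\in F_i$, $g\in U_i\cap L$, $fg\in V_i$ --- yields only
\[
\gamma\,|F_i|\,|U_i\cap L|\ \le\ \sum_{f\in F_i}|fL\cap V_i|\ \approx\ |F_i|\,|V_i\cap L|,
\]
hence $|U_i\cap L|/|L|\lesssim 1/(1+\gamma)$; no information about $U_{i+1}$ enters, so the bound does not iterate. To get $\overline D(U_i)\le(1-\gamma)\overline D(U_{i+1})+O(\eta)$ you must split $V_i=V_{i+1}\cup(V_i\setminus V_{i+1})$ and show that the contribution of $V_{i+1}$ to the count is $O(\eta)|F_i||L|$. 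This is where the mutual F\o lner hypothesis is actually used: $V_{i+1}$ is a $\gamma$-disjoint union of translates of the shapes $F_{i+1},\dots,F_N$, each of which is $(F_iF_i^{-1},\eta)$-invariant, so the ``$F_i$-outer-boundary'' of $V_{i+1}$ inside $L$ has size $O(\eta)|L|$. Only then does the remaining contribution $F_ig\cap(V_i\setminus V_{i+1})\subset U_{i+1}$ give, via a second Fubini, $|U_{i+1}\cap L|-|U_i\cap L|\gtrsim\gamma|U_{i+1}\cap L|-O(\eta)|L|$. You attribute the need for mutual invariance to the window term $F_iL\setminus L$, but that term is handled by choosing $L$ alone sufficiently invariant; the genuinely delicate boundary is that of $V_{i+1}$, and your sketch never touches it.
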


The authors of \cite{OW} indicate also that it is possible to create disjoint quasitilings as above.
In \cite{DHZ} we were able to improve the above and replace the quasitilings by tilings
(this seemingly small improvement will become crucial in Section \ref{cztery}).

\begin{thm}{\cite[Theorem 4.3]{DHZ}}\label{ourtilings}
Let $G$ be a countable amenable group. Then, for any finite set $K$ and any $\eps>0$, there exists a $(K,\eps)$-\inv\ tiling of $G$.
\end{thm}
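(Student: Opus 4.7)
The plan is to bootstrap from the Ornstein--Weiss quasitiling theorem stated immediately above, by iteratively ``congealing'' a sequence of quasitilings so as to remove both the small overlaps and the small uncovered gaps, while only slightly enlarging shapes so that F\o lner invariance is preserved. First I fix a rapidly increasing sequence of finite sets $K=K_1\subset K_2\subset\cdots\subset G$ with $\bigcup_n K_n=G$, together with rapidly decreasing positive sequences $\eps_n,\delta_n,\gamma_n$ chosen so that $\sum_n(\eps_n+\delta_n+\gamma_n)<\eps$, and so that $\eps_{n+1},\delta_{n+1},\gamma_{n+1}$ are small compared with the relative size of the $K_n$-boundary of any $(K_{n+1},\eps_{n+1})$-invariant set. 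For each $n$ I apply the Ornstein--Weiss theorem to obtain a $(K_n,\eps_n)$-invariant, $(1-\delta_n)$-covering, $\gamma_n$-disjoint quasitiling $\CT_n$ of $G$.

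The heart of the construction is an inductive refinement producing disjoint quasitilings $\CT_n'$. The base case $\CT_1'$ is obtained from $\CT_1$ by retaining, for each tile $T$, only the ``core'' $T^\circ$ guaranteed by $\gamma_1$-disjointness; the uncovered remainder $U_1$ then has upper Banach density at most $\delta_1+\gamma_1$. To pass from $\CT_n'$ to $\CT_{n+1}'$, I overlay $\CT_{n+1}$ (with its overlaps trimmed to cores), and for each new core $T^\circ$ I absorb into it every piece of every $\CT_n'$-tile that meets $T^\circ$ but is not entirely contained in it, together with every point of $U_n$ lying in $T^\circ$. Old tiles lying wholly inside some new core are preserved; the remaining points are reassigned unambiguously. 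Since the $K_n$-boundary of $T^\circ$ has relative size at most $\eps_{n+1}+\gamma_{n+1}$ and $U_n$ has density at most $\delta_n$, the newly formed shape differs from the original $(K_{n+1},\eps_{n+1})$-invariant shape of $T$ by a set of relative size at most $\gamma_{n+1}+\eps_{n+1}+\delta_n$.

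Because the surgery at level $n$ touches a given tile only inside a bounded neighborhood of its center, the sequence $(\CT_n')$ stabilizes on every finite region of $G$; call the pointwise limit $\CT_\infty$. Disjointness is preserved throughout, while the bound $\delta_n\to 0$ forces the uncovered set to shrink to the empty set, so $\CT_\infty$ is a partition, i.e., a tiling. Only finitely many shapes appear at each finite level and only finitely many can survive the stabilization, so $\CT_\infty$ is a tiling in the sense of Definition~\ref{quasi}. Finally, the summability condition $\sum_n(\eps_n+\delta_n+\gamma_n)<\eps$ ensures that every shape of $\CT_\infty$ differs from some $(K_n,\eps_n)$-invariant set (with $K_n\supseteq K$, $\eps_n<\eps$) by a set of relative size less than $\eps$, and is therefore itself $(K,\eps)$-invariant.

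The main obstacle, and the reason Theorem~\ref{ourtilings} is a genuine strengthening of the Ornstein--Weiss theorem rather than an immediate corollary, is the uniform shape-control in the inductive step: when gap-fragments and truncated old tiles are absorbed, the enlargement must stay uniformly small \emph{at every tile location}, and the cumulative distortion across infinitely many levels must not degrade the F\o lner invariance in the limit. This is what forces the rapid scale separation between consecutive $K_n$ and the rapid decay of $\eps_n,\delta_n,\gamma_n$ fixed at the outset, and it is also the delicate point where the algebraic Ornstein--Weiss input must be used quantitatively rather than qualitatively.
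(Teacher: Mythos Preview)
The paper does not itself prove this theorem; it is quoted from \cite{DHZ} as an established input, so there is no in-paper proof to compare your attempt against.

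Regarding your sketch on its own merits: the overall strategy of absorbing gap-points and boundary fragments into nearby tiles is the right flavor, but there is a genuine gap at the finiteness of shapes. Your limit $\CT_\infty$, as you describe it, contains surviving tiles from \emph{every} level: small level-$1$ tiles that happened to sit deep inside all subsequent cores, larger level-$2$ composite tiles that survived, still larger level-$3$ composites, and so on. Each level contributes finitely many shapes, but the union over all $n$ need not be finite, and nothing in your argument forces it to be; the bare assertion ``only finitely many can survive the stabilization'' is unjustified. Since Definition~\ref{quasi} requires $\CS(\CT)$ to be finite, this is fatal as written. A correct argument must arrange, in advance, a mechanism that bounds the total number of shapes --- for instance by terminating after finitely many stages with the remainder already empty, rather than passing to an infinite limit.

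The stabilization claim is also a non sequitur. Locality of the surgery at level $n$ (``touches a given tile only inside a bounded neighborhood of its center'') says nothing about whether a fixed $g\in G$ is eventually left alone: $g$ may lie near the boundary of the level-$n$ quasitiling for infinitely many $n$, since small upper Banach density of the boundary region does not exclude any particular point from it infinitely often.
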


\section{The comparison property}
The key notions of this paper are given below (see also \cite{K}).

\begin{defn}\label{defcom}
Let $G$ be a countable amenable group.
\begin{enumerate}
\item Let $G$ act on a \zd\ compact metric space $X$. For two clopen sets $\mathsf A,\mathsf B\subset X$, we say that $\mathsf A$ is \emph{subequivalent} to $\mathsf B$ (and write $\mathsf A\preccurlyeq \mathsf B$), if there exists a finite partition $\mathsf A=\bigcup_{i=1}^k \mathsf A_i$ of $\mathsf A$ into clopen sets and there are elements $g_1,g_2,\dots,g_k$ of $G$ such that $g_1(\mathsf A_1), g_2(\mathsf A_2),\dots,g_k(\mathsf A_k)$ are disjoint subsets of $\mathsf B$. We say that the action \emph{admits comparison} if for any pair of clopen subsets $\mathsf A,\mathsf B$ of $X$, the condition that for each \im\ $\mu$ on $X$ we have $\mu(\mathsf A)<\mu(\mathsf B)$, implies $\mathsf A\preccurlyeq \mathsf B$.
\item If every action of $G$ on any \zd\ compact metric space admits comparison then we will say that $G$ has the \emph{comparison property}.
\end{enumerate}
\end{defn}
Clearly, $\mathsf A\preccurlyeq \mathsf B$ implies $\mu(\mathsf A)\le\mu(\mathsf B)$ for every invariant measure $\mu$, so comparison is nearly an equivalence between subequivalence and the inequality for all \im s.

\begin{rem}\label{from0}
Let two clopen sets $\mathsf A,\mathsf B$ satisfy $\mu(\mathsf A)<\mu(\mathsf B)$ for every \im\ $\mu$. Because the sets $\mathsf A,\mathsf B$ are clopen, the function $\mu\mapsto\mu(\mathsf B)-\mu(\mathsf A)$ is continuous, and since it is positive on a compact set, it is separated from zero, i.e.,
$$
\inf_{\mu\in\M_G(X)}(\mu(\mathsf B)-\mu(\mathsf A))>0.
$$
\end{rem}

Consider also the following seemingly weaker property:
\begin{defn}\label{defwcom}
The action of a countable amenable group $G$ on a \zd\ compact metric space $X$ admits \emph{weak comparison} if there exists a constant $C\ge 1$ such that for any clopen sets $\mathsf A,\mathsf B\subset X$, the condition $\sup_\mu\mu(\mathsf A)<\frac1C\inf_\mu\mu(\mathsf B)$ (where $\mu$ ranges over all \im s) implies $\mathsf A\preccurlyeq \mathsf B$.
\end{defn}

Clearly, comparison implies weak comparison. We will show that these properties are in fact equivalent.

\begin{lem}\label{compisweakcomp}
Weak comparison implies comparison.
\end{lem}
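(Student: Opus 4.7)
The plan is: assume weak comparison holds with constant $C \ge 1$, take clopen sets $\mathsf A, \mathsf B \subset X$ with $\mu(\mathsf A) < \mu(\mathsf B)$ for every $\mu \in \M_G(X)$, and derive $\mathsf A \preccurlyeq \mathsf B$. By Remark \ref{from0} the gap $\delta := \inf_\mu(\mu(\mathsf B) - \mu(\mathsf A))$ is strictly positive. Replacing $\mathsf A$ by $\mathsf A \setminus \mathsf B$ and $\mathsf B$ by $\mathsf B \setminus \mathsf A$---both clopen, both preserving the strict pointwise inequality and the gap---I may assume $\mathsf A \cap \mathsf B = \emptyset$.

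I would then iterate weak comparison, peeling off clopen chunks of $\mathsf A$ one at a time. Inductively, I produce pairwise disjoint clopen $\mathsf A^{(1)}, \mathsf A^{(2)}, \ldots \subset \mathsf A$ and pairwise disjoint clopen $\mathsf B^{(1)}, \mathsf B^{(2)}, \ldots \subset \mathsf B$, each pair coming from one application of weak comparison, and crucially I take $\mathsf B^{(n)}$ to be the actual (clopen) image of the subequivalence so that $\mu(\mathsf B^{(n)}) = \mu(\mathsf A^{(n)})$ for every invariant $\mu$. The residual $\mathsf A_{\mathrm{res}} := \mathsf A \setminus \bigsqcup_n \mathsf A^{(n)}$ and available $\mathsf B_{\mathrm{av}} := \mathsf B \setminus \bigsqcup_n \mathsf B^{(n)}$ then continue to satisfy $\mu(\mathsf B_{\mathrm{av}}) - \mu(\mathsf A_{\mathrm{res}}) \ge \delta$ at every stage, since each step removes equal invariant measure from both sides. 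Concatenating the partial subequivalences yields $\mathsf A \preccurlyeq \mathsf B$ as soon as $\mathsf A_{\mathrm{res}}$ is exhausted.

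For the inductive step, I would use zero-dimensionality to pick a clopen $\mathsf B' \subset \mathsf B_{\mathrm{av}}$ with $\inf_\mu \mu(\mathsf B')$ close to $\inf_\mu \mu(\mathsf B_{\mathrm{av}})$, and then a clopen $\mathsf A' \subset \mathsf A_{\mathrm{res}}$ with $C\sup_\mu \mu(\mathsf A') < \inf_\mu \mu(\mathsf B')$; weak comparison then delivers $\mathsf A' \preccurlyeq \mathsf B'$, and I record $\mathsf A^{(n+1)} := \mathsf A'$ together with the image $\mathsf B^{(n+1)} \subset \mathsf B'$.

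The main obstacle is termination of this peeling in finitely many steps. There is no naive guarantee that $\sup_\mu \mu(\mathsf A_{\mathrm{res}})$ decreases by a definite amount, because invariant measures concentrated on periodic orbits can pin the supremum above a positive threshold on every clopen piece that meets the orbit. To circumvent this, I would fix at the outset a finite clopen partition $\mathsf A = \bigsqcup_{i=1}^N \mathsf P_i$, chosen finely enough (using zero-dimensionality and continuity of $\mu \mapsto \mu(\cdot)$ on the compact set $\M_G(X)$) that the iterative step can be arranged to absorb an entire atom $\mathsf P_i$ into $\mathsf A^{(n)}$, yielding termination after at most $N$ iterations. Constructing such a partition together with the accompanying ``one atom per step'' schedule compatible with the weak comparison hypothesis on the shrinking $\mathsf B_{\mathrm{av}}$ is the technical heart of the argument.
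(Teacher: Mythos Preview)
Your iterative scheme has a genuine gap at exactly the point you flagged: termination. The proposed fix---choosing a fine clopen partition $\mathsf A=\bigsqcup_{i=1}^N\mathsf P_i$ so that an entire atom can be absorbed at each step---does not work, because refining a clopen partition does not reduce $\sup_{\mu}\mu(\mathsf P_i)$. If a periodic orbit of period $p$ meets $\mathsf A$, then the uniform measure on that orbit gives mass $\ge 1/p$ to \emph{every} clopen set containing one of its points, no matter how fine the partition. So the inequality $C\sup_\mu\mu(\mathsf P_i)<\inf_\mu\mu(\mathsf B_{\mathrm{av}})$ needed to invoke weak comparison on a whole atom can fail already at the first step (take $C$ large and a short periodic orbit with one point in $\mathsf A$). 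Zero-dimensionality and continuity of $\mu\mapsto\mu(\cdot)$ are of no help here: they tell you the supremum is attained, not that it can be made small.

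The paper's argument bypasses this obstruction by \emph{not} iterating weak comparison at all. Instead it enumerates $G=\{g_1,g_2,\dots\}$ and performs a greedy clopen matching: $\mathsf A_k$ is the part of $\mathsf A$ not yet matched that $g_k$ sends into the unmatched part of $\mathsf B$, and $\mathsf B_k=g_k(\mathsf A_k)$. The key point is that the unmatched remainder $\mathsf A_0=\mathsf A\setminus\bigcup_k\mathsf A_k$ has $\mu(\mathsf A_0)=0$ for every ergodic (hence every invariant) $\mu$, since by construction no orbit from $\mathsf A_0$ ever reaches $\mathsf B_0=\mathsf B\setminus\bigcup_k\mathsf B_k$, while $\mu(\mathsf B_0)\ge\eps>0$. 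A Dini-type uniform convergence argument then gives a finite $k$ with $\sup_\mu\mu\bigl(\mathsf A\setminus\bigcup_{i\le k}\mathsf A_i\bigr)<\eps/C\le\tfrac1C\inf_\mu\mu\bigl(\mathsf B\setminus\bigcup_{i\le k}\mathsf B_i\bigr)$, and a \emph{single} application of weak comparison finishes. The greedy matching is what drives the residual mass to zero; this is the missing idea in your outline.
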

\begin{proof}
Suppose the action of a countable amenable group $G$ on a \zd\ compact metric space $X$ admits weak comparison with a constant $C$. Let two clopen sets $\mathsf A,\mathsf B$ satisfy $\mu(\mathsf A)<\mu(\mathsf B)$ for every \im\ $\mu$. By
Remark \ref{from0}, $\inf_{\mu\in\M_G(X)}(\mu(\mathsf B)-\mu(\mathsf A))>\eps$ for some positive $\eps$.
We order the group (arbitrarily) by natural numbers, as $G=\{g_1,g_2,\dots\}$ (or $G=\{g_1,\dots,g_n\}$ in case $G$ is finite). We let $\mathsf A_1=\mathsf A\cap g_1^{-1}(\mathsf B)$, and $\mathsf B_1=g_1(\mathsf A_1)$. For each $k>1$ (or $1<k\le n$ in the finite case) we set inductively
$$
\mathsf A_k = \mathsf A\setminus\Bigl(\bigcup_{i=1}^{k-1} \mathsf A_i\Bigr)\cap g_k^{-1}\left(\mathsf B\setminus\Bigl(\bigcup_{i=1}^{k-1} \mathsf B_i\Bigr)\right),
$$
and $\mathsf B_k=g_k(\mathsf A_k)$. It is not hard to see that the sets $\mathsf A_k$ and $\mathsf B_k$ are clopen (some of them possibly empty), disjoint subsets of $\mathsf A$ and $\mathsf B$, respectively and $\mu(\mathsf A_k)=\mu(\mathsf B_k)$ for each $k$ and every \im\ $\mu$. Consider the remainder sets
$$
\mathsf A_0=\mathsf A\setminus\Bigl(\bigcup_{k=1}^{\infty} \mathsf A_k\Bigr)\text{\ \ and \ \ }\mathsf B_0=\mathsf B\setminus\Bigl(\bigcup_{k=1}^{\infty} \mathsf B_k\Bigr),
$$
or in the finite case
$$
\mathsf A_0=\mathsf A\setminus\Bigl(\bigcup_{k=1}^{n} \mathsf A_k\Bigr)\text{\ \ and \ \ }\mathsf B_0=\mathsf B\setminus\Bigl(\bigcup_{k=1}^{n} \mathsf B_k\Bigr).
$$
Clearly, for each \im\ $\mu$ we have $\mu(\mathsf B_0)\ge\eps$. We claim that $\mu(\mathsf A_0)=0$. It suffices to consider an ergodic measure. But if $\mu(\mathsf A_0)$ was positive, then, by ergodicity, there would exist an $x\in \mathsf A_0$ and $g=g_k$ (for some $k$) such that $g_k(x)\in \mathsf B_0$. This is a contradiction, as, by construction, no orbit starting in $\mathsf A_0$ visits the set $\mathsf B_0$. Now, by countable additivity of the measures, we obtain, for each \im\ $\mu$,
$$
\lim_{k\to\infty} \mu\!\left(\mathsf A\setminus\Bigl(\bigcup_{i=1}^k \mathsf A_i\Bigr)\right) =0.
$$
Clearly, the limit is decreasing. Since the measured sets are clopen, the above measure values viewed as functions on the set of \im s are continuous, and thus the convergence is uniform. Let $\delta>0$ be strictly smaller than $\frac\eps C$. Then, for $k$ large enough we have, simultaneously for all \im s $\mu$,
$$
\mu\!\left(\mathsf A\setminus\Bigl(\bigcup_{i=1}^k \mathsf A_i\Bigr)\right)\le\delta<\frac\eps C\le
\frac1C\,\mu\!\left(\mathsf B\setminus\Bigl(\bigcup_{i=1}^k \mathsf B_i\Bigr)\right).
$$
By the weak comparison assumption, we get
$$
\mathsf A\setminus\Bigl(\bigcup_{i=1}^k \mathsf A_i\Bigr)\preccurlyeq \mathsf B\setminus\Bigl(\bigcup_{i=1}^k \mathsf B_i\Bigr),
$$
which, together with the obvious fact that $\bigcup_{i=1}^k \mathsf A_i\preccurlyeq \bigcup_{i=1}^k \mathsf B_i$, completes the proof of $\mathsf A\preccurlyeq \mathsf B$.
\end{proof}

\begin{rem}\label{finitecomp}
The above proof shows also that every finite group $G=\{g_1,g_2,\dots,g_n\}$ has the comparison property. For such a group we have $\mathsf A_0=\mathsf A\setminus(\bigcup_{i=1}^n\mathsf A_i)$. The fact that $\mathsf A_0$ has measure $0$ for all \im s implies that it is empty.
\end{rem}

\begin{rem}\label{disjoint}In the definition of comparison, it suffices to consider only disjoint clopen sets $\mathsf A, \mathsf B$. Indeed, $\{\mathsf A\cap\mathsf B, \mathsf A\setminus\mathsf B\}$ is a clopen partition of $\mathsf A$, and $g_0=e$ sends $\mathsf A\cap\mathsf B$ inside $\mathsf B$, so if $(\mathsf A\setminus\mathsf B)\preccurlyeq(\mathsf B\setminus\mathsf A)$ then also $\mathsf A\preccurlyeq\mathsf B$.
Also note that, for any measure~$\mu$, $\mu(\mathsf A)<\mu(\mathsf B)$ if and only if $\mu(\mathsf A\setminus\mathsf B)<\mu(\mathsf B\setminus\mathsf A)$.
\end{rem}

It is known that many important countable amenable groups, for instance $\z$, $\z^d$,
have the comparison property. However, the following question remains open:

\begin{ques}\label{3.7}
Does every countable amenable group have the comparison property?
\end{ques}

In this paper we will provide a positive answer in a large class of groups.

\section{Banach density interpretation of the comparison property}\label{two}

Now we provide a characterization of the comparison property of a countable amenable group in terms of Banach density advantage for subsets of the group.

\subsection{Passing between clopen subsets of $X$ and subsets of $G$}
This subsection contains fairly standard tools, often exploited in symbolic dynamics. We include them for completeness and as an opportunity to introduce our notation. We continue to assume that $G$ is a countable amenable group.
\subsubsection{}\label{211}
First suppose that $G$ acts on a \zd\ compact metric space in which we have two disjoint clopen sets $\mathsf A$ and $\mathsf B$. Define a map $\pi_{\mathsf A\mathsf B}:X\to\{\mathsf 0,\mathsf 1,\mathsf 2\}^G$ by the formula
$$
(\pi_{\mathsf A\mathsf B}(x))_g=\begin{cases}\mathsf 1\\\mathsf 2\\\mathsf 0\end{cases} \iff g(x)\in \begin{cases} \mathsf A\\\mathsf B\\(\mathsf A\cup \mathsf B)^c,\end{cases}
$$
respectively ($g\in G$). As easily verified, $\pi_{\mathsf A\mathsf B}$ is continuous and intertwines the action on $X$ with the shift action, in other words, it is a \tl\ factor map onto its image $Y_{\mathsf A\mathsf B}=\pi_{\mathsf A\mathsf B}(X)$, which is a subshift,  in which we can distinguish two natural clopen sets, the cylinders $[\mathsf 1]$ and $[\mathsf 2]$. Notice that $\pi_{\mathsf A\mathsf B}^{-1}([\mathsf 1])=\mathsf A$ and $\pi_{\mathsf A\mathsf B}^{-1}([\mathsf 2])=\mathsf B$, hence for every \im\ $\mu$ on $X$ we have $\mu(\mathsf A)=\nu([\mathsf 1])$ and $\mu(\mathsf B)=\nu([\mathsf 2])$, where $\nu=\pi_{\mathsf A\mathsf B}^*(\mu)$ is the ``pushdown'' of $\mu$ onto $Y_{\mathsf A\mathsf B}$ given by $\nu(\cdot) = \mu(\pi_{\mathsf A\mathsf B}^{-1}(\cdot))$. It is well-known that $\pi_{\mathsf A\mathsf B}^*$ is a surjection onto the set $\M_G(Y_{\mathsf A\mathsf B})$ (from now on abbreviated as $\M_{\mathsf A\mathsf B}$) of all shift-\im s on $Y_{\mathsf A\mathsf B}$. For each $x\in X$ we define two subsets of $G$,
\begin{align}
A_x &= \{g:g(x)\in \mathsf A\}=\{g:(\pi_{\mathsf A\mathsf B}(x))_g=\mathsf 1\}=\{g:g(\pi_{\mathsf A\mathsf B}(x))\in[\mathsf 1]\},\label{kiki}\\
B_x &= \{g:g(x)\in \mathsf B\}=\{g:(\pi_{\mathsf A\mathsf B}(x))_g=\mathsf 2\}=\{g:g(\pi_{\mathsf A\mathsf B}(x))\in[\mathsf 2]\}.\label{kiko}
\end{align}

In the above context we can define new notions:
\begin{defn}\label{dbar}
We fix in $G$ a F\o lner \sq\ $(F_n)$. The terms
\begin{align*}
\underline D(\mathsf B)&=\limsup_{n\to\infty}\ \inf_{x\in X}\underline D_{F_n}(B_x),\\
\overline D(\mathsf B)&=\liminf_{n\to\infty}\ \sup_{x\in X}\overline D_{F_n}(B_x),\\
\underline D(\mathsf B,\mathsf A)&=\limsup_{n\to\infty}\ \inf_{x\in X}\underline D_{F_n}(B_x,A_x),
\end{align*}
will be called the \emph{uniform lower Banach density of (visits of the orbits in) $\mathsf B$}, \emph{uniform upper Banach density of $\mathsf B$} and \emph{uniform Banach density advantage of $\mathsf B$ over $\mathsf A$}.
\end{defn}

A statement analogous to Lemma \ref{bd} holds:

\begin{lem}\label{bbb}
The values of $\underline D(\mathsf B)$, $\overline D(\mathsf B)$ and $\underline D(\mathsf B,\mathsf A)$ do not depend on the choice of the F\o lner sequence, the limits superior and inferior in the definition are in fact limits, and moreover
\begin{align*}
\underline D(\mathsf B)&=\sup_F\ \inf_{x\in X}\underline D_F(B_x),\\
\overline D(\mathsf B)&=\inf_F\ \sup_{x\in X}\overline D_F(B_x),\\
\underline D(\mathsf B,\mathsf A)&=\sup_F\ \inf_{x\in X}\underline D_F(B_x,A_x),
\end{align*}
where $F$ ranges over all finite subsets of $G$.
\end{lem}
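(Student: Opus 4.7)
The plan is to imitate the proof of Lemma \ref{bd}: establish the third equation (for the uniform Banach density advantage) first, and then deduce the first and the second as consequences. The third equation is the most general one, and Lemma \ref{bdc} is precisely the tool built to transport density-advantage estimates from a ``reference'' finite set $F$ to any $(F,\eps)$-invariant set $F_1$, which is exactly what is needed here.

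For the third equation, fix a F\o lner sequence $(F_n)$. The ``easy'' inequality
\[
\limsup_{n\to\infty}\,\inf_{x\in X}\underline D_{F_n}(B_x,A_x)\ \le\ \sup_F\,\inf_{x\in X}\underline D_F(B_x,A_x)
\]
is immediate, because each $F_n$ is itself one of the finite sets $F$ over which the supremum on the right is taken. For the reverse inequality, I would fix an arbitrary finite $F\subset G$ and $\eps>0$. For $n$ large enough $F_n$ is $(F,\eps)$-invariant, so Lemma \ref{bdc} applied pointwise (with $B=B_x$ and $A=A_x$) gives
\[
\underline D_{F_n}(B_x,A_x)\ \ge\ \underline D_F(B_x,A_x)-4\eps
\]
for every $x\in X$. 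Taking the infimum over $x$ on both sides and then the liminf over $n$ yields
\[
\liminf_{n\to\infty}\,\inf_{x\in X}\underline D_{F_n}(B_x,A_x)\ \ge\ \inf_{x\in X}\underline D_F(B_x,A_x)-4\eps.
\]
Letting $\eps\to 0$ and taking the supremum over $F$ finishes the proof and, as a byproduct, establishes that the liminf coincides with the limsup (so the limit exists) and that the value is independent of the chosen F\o lner sequence.

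The first equation then follows by substituting $A=\emptyset$ (hence $A_x=\emptyset$ for every $x$), which turns $\underline D_F(B_x,A_x)$ into $\underline D_F(B_x)$. For the second equation, the standard complementation trick applies: since $\mathsf B$ is clopen, so is $\mathsf C=X\setminus\mathsf B$, and $B_x^{\,c}=C_x$, so
\[
\overline D_F(B_x)=1-\underline D_F(C_x),\qquad \sup_{x\in X}\overline D_F(B_x)=1-\inf_{x\in X}\underline D_F(C_x).
\]
Applying the already established first equation to $\mathsf C$ converts sup/inf and liminf/limsup correctly and delivers the desired expression for $\overline D(\mathsf B)$.

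No serious obstacle is expected; the only step worth stating carefully is the passage from the pointwise estimate supplied by Lemma \ref{bdc} to the uniform one, which is legitimate because the constant $4\eps$ on the right-hand side of that lemma does not depend on $x$ (nor on $g$). Thus the infimum over $x$ can be pulled through without loss, which is what makes the uniform version go through by essentially the same argument as in Lemma \ref{bd}.
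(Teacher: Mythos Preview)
Your proposal is correct and follows essentially the same approach as the paper's own proof, which simply says the argument is identical to that of Lemma~\ref{bd} with the remark that Lemma~\ref{bdc} applies to the sets $A_x,B_x$ simultaneously for all $x\in X$ once $F_n$ is $(F,\eps)$-invariant. Your explicit observation that the constant $4\eps$ in Lemma~\ref{bdc} is independent of $x$ (so the infimum over $x$ commutes with the estimate) is exactly the point the paper highlights.
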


\begin{proof} The proof is identical to the proof of Lemma \ref{bd}, with the only difference that Lemma \ref{bdc} applies to the sets $A_x,B_x$ whenever $F_n$ is $(F,\eps)$-\inv, simultaneously for all $x\in X$.
\end{proof}

In a moment we will connect the above notions with the values assumed by the invariant measures on $X$ on the sets $\mathsf A$ and $\mathsf B$.

\subsubsection{}\label{212}
We will now describe the opposite passage: from subsets of $G$ to clopen subsets of some \zd\ compact metric space on which we have a $G$-action.  Suppose we have two disjoint subsets $A$ and $B$ of $G$. Then they determine an element $y^{AB}$ of the symbolic space $\{\mathsf 0,\mathsf 1,\mathsf 2\}^G$, given by the rule
$$
y^{AB}_g=\begin{cases}\mathsf 1\\\mathsf 2\\\mathsf 0\end{cases} \iff g\in \begin{cases} A\\B\\(A\cup B)^c,\end{cases}
$$
respectively ($g\in G$).
The shift-orbit closure of $y^{AB}$, i.e., the set
$$
Y^{AB}=\overline{\{g(y^{AB}):g\in G\}}
$$
is a subshift, which we will call the \emph{subshift associated with the sets $A, B$}. The set of its \im s, $\M_G(Y^{AB})$, will be abbreviated as $\M^{AB}$. In this subshift we will distinguish two clopen sets, $\mathsf A=[\mathsf 1]$ and $\mathsf B=[\mathsf 2]$. It is almost immediate to see that if we apply the definitions of the preceding paragraph to the shift action on $Y^{AB}$ and the above sets $\mathsf A,\mathsf B$ then the factor map $\pi_{\mathsf A\mathsf B}$ is the identity, and $A_{y^{AB}}=\{g:y^{AB}_g=\mathsf 1\} = A$ and $B_{y^{AB}}=\{g:y^{AB}_g=\mathsf 2\}=B$.

\begin{prop}\label{prop}
\begin{enumerate}
\item Suppose $G$ acts on a \zd\ compact metric space $X$ in which we are given two disjoint clopen sets, $\mathsf A,\mathsf B$. Then
\begin{align*}
\inf_{\mu\in\M_G(X)}\mu(\mathsf B)&=\underline D(\mathsf B)=\inf_{x\in X}\underline D(B_x), \\
\sup_{\mu\in\M_G(X)}\mu(\mathsf B)&=\overline D(\mathsf B)=\sup_{x\in X}\overline D(B_x),\\
\inf_{\mu\in\M_G(X)}(\mu(\mathsf B)-\mu(\mathsf A))&=\underline D(\mathsf B,\mathsf A)=\inf_{x\in X}\underline D(B_x,A_x).
\end{align*}
\item
Next suppose that $A$ and $B$ are disjoint subsets of $G$. Consider the cylinders $[\mathsf 1]$ and $[\mathsf 2]$ in the subshift $Y^{AB}$ associated with these sets. Then
\begin{gather*}
\inf_{\mu\in\M^{AB}}\mu([2])=\underline D(B),\\
\sup_{\mu\in\M^{AB}}\mu([2])=\overline D(B),\\
\inf_{\mu\in\M^{AB}}(\mu([2])-\mu([1]))=\underline D(B,A).
\end{gather*}
\end{enumerate}
\end{prop}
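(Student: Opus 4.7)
The plan is to reduce everything to the third equality of part (1), $\inf_\mu (\mu(\mathsf B) - \mu(\mathsf A)) = \underline D(\mathsf B, \mathsf A) = \inf_x \underline D(B_x, A_x)$. I would recover the first equality of (1) by specializing to $\mathsf A = \emptyset$ (so that $A_x = \emptyset$ for every $x$), and the second by applying the first to $X \setminus \mathsf B$, using $\mu(X \setminus \mathsf B) = 1 - \mu(\mathsf B)$ together with the analogous relationship between lower and upper densities under complementation. Part (2) would then follow by applying part (1) to the subshift $Y^{AB}$ with the clopen sets $[\mathsf 1], [\mathsf 2]$, plus a combinatorial identification of the uniform Banach densities on $Y^{AB}$ with the static ones on $G$.

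I would first dispose of the two easier inequalities. The bound $\inf_\mu (\mu(\mathsf B) - \mu(\mathsf A)) \geq \underline D(\mathsf B, \mathsf A)$ follows from invariance of $\mu$: for any finite $F \subset G$, averaging $\mathbf{1}_\mathsf B(h(x)) = \mathbf{1}_{B_x}(h)$ over $h \in F$ gives $\mu(\mathsf B) - \mu(\mathsf A) = \int \tfrac{|B_x \cap F| - |A_x \cap F|}{|F|} \, d\mu(x)$; since the integrand dominates $\underline D_F(B_x, A_x)$ pointwise, I obtain $\mu(\mathsf B) - \mu(\mathsf A) \geq \inf_x \underline D_F(B_x, A_x)$, and taking $\sup_F$ produces $\underline D(\mathsf B, \mathsf A)$ by Lemma \ref{bbb}. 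The trivial max-min inequality $\sup_F \inf_x \leq \inf_x \sup_F$ gives $\underline D(\mathsf B, \mathsf A) \leq \inf_x \underline D(B_x, A_x)$.

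The hard direction is constructing invariant measures that saturate the infima. My key tool would be the translation identity $B_{g(x)} = B_x g^{-1}$ (and likewise for $A$), which gives $|B_{g(x)} \cap F| = |B_x \cap Fg|$. Fix a F\o lner sequence $(F_n)$. Given $x \in X$, I would pick $g_n \in G$ with $\tfrac{|B_x \cap F_n g_n| - |A_x \cap F_n g_n|}{|F_n|} \leq \underline D_{F_n}(B_x, A_x) + \tfrac{1}{n}$, set $y_n = g_n(x)$, and form the empirical measures $\mu_n = \tfrac{1}{|F_n|}\sum_{h \in F_n} \delta_{h(y_n)}$; these satisfy $\mu_n(\mathsf B) - \mu_n(\mathsf A) \leq \underline D_{F_n}(B_x, A_x) + \tfrac{1}{n}$. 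Any weak-$*$ cluster point $\mu$ is $G$-invariant (F\o lner) and, by clopenness of $\mathsf A, \mathsf B$, satisfies $\mu(\mathsf B) - \mu(\mathsf A) \leq \underline D(B_x, A_x)$, whence $\inf_\mu (\mu(\mathsf B) - \mu(\mathsf A)) \leq \inf_x \underline D(B_x, A_x)$. For the companion bound $\inf_x \underline D(B_x, A_x) \leq \inf_\mu (\mu(\mathsf B) - \mu(\mathsf A))$, it suffices by ergodic decomposition to handle ergodic $\mu$; I would choose a tempered F\o lner sequence and a Lindenstrauss-generic $x$ for $\mu$ (see \cite{L}), so that $\mu(\mathsf B) - \mu(\mathsf A) = \lim_n \tfrac{|B_x \cap F_n| - |A_x \cap F_n|}{|F_n|} \geq \lim_n \underline D_{F_n}(B_x, A_x) = \underline D(B_x, A_x)$.

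For part (2), the combinatorial key is that for every $y \in Y^{AB}$, every finite $F \subset G$, and every $g \in G$ there exists $g_0 \in G$ with $y|_{Fg} = g_0(y^{AB})|_{Fg}$ (orbit closure), which yields $|B_y \cap Fg| - |A_y \cap Fg| = |B \cap F g g_0| - |A \cap F g g_0| \geq |F|\,\underline D_F(B, A)$. Taking $\inf_g$ gives $\underline D_F(B_y, A_y) \geq \underline D_F(B, A)$, with equality at $y = y^{AB}$ (via $g_0 = e$), so $\inf_y \underline D_F(B_y, A_y) = \underline D_F(B, A)$; taking $\sup_F$ via Lemmas \ref{bbb} and \ref{bd} identifies the uniform density with the static one, and analogous arguments cover $\overline D$ and the case $A = \emptyset$. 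The main obstacle will be the coordinated weak-$*$ construction of paragraph three: the translation identity $|B_{g(x)} \cap F| = |B_x \cap Fg|$ is precisely what allows absorbing the near-minimizer $g_n$ in the Banach-density infimum into the base point $y_n = g_n(x)$, so that the empirical measures centered at $y_n$ have mass on $\mathsf B \setminus \mathsf A$ approximating the infimal value.
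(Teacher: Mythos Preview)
Your chain of inequalities does not close. You establish (a) $\inf_\mu(\mu(\mathsf B)-\mu(\mathsf A))\ge\underline D(\mathsf B,\mathsf A)$, (b) $\underline D(\mathsf B,\mathsf A)\le\inf_x\underline D(B_x,A_x)$, (c) $\inf_\mu\le\inf_x\underline D(B_x,A_x)$, and (d) $\inf_x\underline D(B_x,A_x)\le\inf_\mu$. From (c) and (d) you obtain $\inf_\mu=\inf_x\underline D(B_x,A_x)$, but then (a) and (b) only say $\underline D(\mathsf B,\mathsf A)\le\inf_\mu=\inf_x$; the reverse inequality $\underline D(\mathsf B,\mathsf A)\ge\inf_\mu$ is never proved. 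Since $\underline D(\mathsf B,\mathsf A)=\sup_F\inf_x\underline D_F(B_x,A_x)$ while $\inf_x\underline D(B_x,A_x)=\inf_x\sup_F\underline D_F(B_x,A_x)$, there is no abstract minimax reason these should coincide, so this is a genuine gap.

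The fix is a one-line change in your empirical-measure paragraph: do not fix $x$; instead choose for each $n$ a pair $(x_n,g_n)$ with
\[
\frac{|B_{x_n}\cap F_ng_n|-|A_{x_n}\cap F_ng_n|}{|F_n|}\le\inf_{x\in X}\underline D_{F_n}(B_x,A_x)+\tfrac1n,
\]
set $y_n=g_n(x_n)$, and form $\mu_n=\tfrac1{|F_n|}\sum_{h\in F_n}\delta_{h(y_n)}$. Any weak-$*$ cluster point $\mu$ is invariant, and by clopenness and Lemma~\ref{bbb} one gets $\mu(\mathsf B)-\mu(\mathsf A)\le\lim_n\inf_x\underline D_{F_n}(B_x,A_x)=\underline D(\mathsf B,\mathsf A)$, hence $\inf_\mu\le\underline D(\mathsf B,\mathsf A)$. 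Together with (b) and (d) this closes the cycle $\inf_\mu\le\underline D(\mathsf B,\mathsf A)\le\inf_x\le\inf_\mu$; your (a) and (c) then become redundant. This is precisely the paper's argument, so after the correction your proof of part~(1) coincides with it. Your treatment of part~(2) via block-matching along the orbit closure of $y^{AB}$ is correct and is also the paper's approach.
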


\begin{proof}
In (1) we will only show the last line of equalities. The first line will then follow by plugging in $\mathsf A=\emptyset$ and the the second one by considering the complement of $\mathsf B$. First suppose that we have sharp inequality $\inf_{\mu\in\M_G(X)}(\mu(\mathsf B)-\mu(\mathsf A))>\underline D(\mathsf B,\mathsf A)$. By Lemma \ref{bbb}, there exists an $\eps>0$ such that for every finite set $F$, $\inf_{\mu\in\M_G(X)} (\mu(\mathsf B)-\mu(\mathsf A))-\eps > \inf_{x\in X} \underline D_F(B_x,A_x)$.
In particular for every set $F_n$ in an \emph{a priori} selected F\o lner \sq, there exists some $x_n\in X$ and $g_n\in G$ with
$$
\inf_{\mu\in\M_G(X)}(\mu(\mathsf B)-\mu(\mathsf A))-\eps > \frac1{|F_n|}(|B_{x_n}\cap F_ng_n|-|A_{x_n}\cap F_ng_n|).
$$
Note that $|B_{x_n}\cap F_ng_n|=|\{f\in F_n:fg_n(x_n)\in \mathsf B\}|$ (and analogously for $\mathsf A$),
thus the right hand side takes on the form
$$
\frac1{|F_n|}(|\{f\in F_n:fg_n(x_n)\in \mathsf B\}|-|\{f\in F_n:fg_n(x_n)\in \mathsf A\}|).
$$
The function $\mathsf W\mapsto \frac1{|F_n|}|\{f\in F_n:fg_n(x_n)\in \mathsf W\}|$ defined on Borel subsets of $X$ is equal to the probability measure $\frac1{|F_n|}\sum_{f\in F_n}\delta_{fg_n(x_n)}$. This \sq\ of measures has a sub\sq\ convergent in the weak-star topology to some $\mu_0\in\M_G(X)$. Since the characteristic functions of the clopen sets $\mathsf A, \mathsf B$ are continuous, we have
$$
\inf_{\mu\in\M_G(X)}(\mu(\mathsf B)-\mu(\mathsf A))-\eps\ge \mu_0(\mathsf B)-\mu_0(\mathsf A),
$$
which is a contradiction. We have proved that $\inf_{\mu\in\M_G(X)}(\mu(\mathsf B)-\mu(\mathsf A))\le\underline D(\mathsf B,\mathsf A)$. The inequality $\underline D(\mathsf B,\mathsf A)\le\inf_{x\in X}\underline D(B_x,A_x)$ is trivial; both sides differ by changing the order of $\limsup_n$ and $\inf_x$ and on the left the infimum is applied earlier.

For the last missing inequality, notice that, given $\eps>0$, there exists an ergodic measure $\mu_0\in\M_G(X)$ with $\mu_0(\mathsf B)-\mu_0(\mathsf A) < \inf_{\mu\in\M_G(X)}(\mu(\mathsf B)-\mu(\mathsf A))+\eps$. There exists a point $x\in X$ generic for $\mu_0$. Then $y=\pi_{\mathsf A\mathsf B}(x)$ is generic for $\nu_0=\pi^*_{\mathsf A\mathsf B}(\mu_0)$. This implies that
$\frac1{|F_n|}|\{f\in F_n:f(y)\in[\mathsf 1]\}|\to \nu_0([\mathsf 1])=\mu_0(\mathsf A)$ (and analogously for $[\mathsf 2]$ and $\mathsf B$). Thus, for each sufficiently large $n$ we have
$$
\frac1{|F_n|}(|\{f\in F_n:f(y)\in[\mathsf 2]\}|-|\{f\in F_n:f(y)\in[\mathsf 1]\}|)< \mu_0(\mathsf B)-\mu_0(\mathsf A)+\eps.
$$
But $f(y)\in[\mathsf 1]\iff(\pi_{\mathsf A\mathsf B}(x))_f=1\iff f(x)\in \mathsf A\iff f\in A_x$ (and analogously for $\mathsf B$), so we have shown that
$$
\frac1{|F_n|}(|B_x\cap F_n|-|A_x\cap F_n|)< \inf_{\mu\in\M_G(X)}(\mu(\mathsf B)-\mu(\mathsf A))+2\eps.
$$
Clearly, the left hand side is not smaller than
$$
\inf_{g\in G}\frac1{|F_n|}(|B_x\cap F_ng|-|A_x\cap F_ng|)=\underline D_{F_n}(B_x,A_x).
$$
Passing to the limit over $n$ and then applying infimum over all $x\in X$ we obtain
$\inf_{x\in X}\underline D(B_x,A_x)\le \inf_{\mu\in\M_G(X)}(\mu(\mathsf B)-\mu(\mathsf A))+2\eps$.
Since this is true for every $\eps>0$, (1) is proved.

\smallskip
We pass to proving (2). As before, the last equality suffices. From (1) applied to the cylinders $\mathsf A=[\mathsf 1]$ and $\mathsf B=[\mathsf 2]$ we get
$$
\inf_{\mu\in\M^{AB}}(\mu([2])-\mu([1]))=\underline D([\mathsf 2],[\mathsf 1])
=\lim_{n\to\infty}\ \,\inf_{y\in Y^{AB}}\ \inf_{g\in G} \frac1{|F_n|} (|B_y\cap F_ng|-|A_y\cap F_ng|).
$$
The above difference $|B_y\cap F_ng|-|A_y\cap F_ng|$ depends on the block $y|_{F_ng}$.
Notice that we are considering a transitive subshift with the transitive point $y^{AB}$ (i.e., whose orbit is dense in the subshift), so every block $y|_{F_ng}$ (for any $y\in Y^{AB}$ and any $g\in G$) occurrs also in $y^{AB}$ as a block $y^{AB}|_{F_ng'}$ for some $g'$ (the converse need not be true, unless $y$ is another transitive point).
Thus, for any $n$, the infimum over $y\in Y^{AB}$ on the right hand side of the formula displayed above is the smallest for $y=y^{AB}$. Recall that $A_{y^{AB}}=A$ and $B_{y^{AB}}=B$. We have proved that
$$
\inf_{\mu\in\M^{AB}}(\mu([2])-\mu([1])) = \lim_{n\to\infty}\ \inf_{g\in G}\frac1{|F_n|} (|B\cap F_ng|-|A\cap F_ng|).
$$
The right hand side is precisely $\underline D(B,A)$.
\end{proof}

The following notions are standard in symbolic dynamics. We assume that $G$ is a countable group (in the remainder of this subsection amenability is inessential).

\begin{defn} Let $\Lambda$ and $\Delta$ be some finite sets (alphabets). By a \emph{block code} we will mean any function $\Xi:\Lambda^F\to\Delta$, where $F$ is a finite subset of $G$
(called the \emph{coding horizon} of \,$\Xi$).
\end{defn}

The Curtis--Hedlund--Lyndon Theorem \cite{H} (which holds for actions of any countable group) states:
\begin{thm}\label{CHL}
Let $X\subset\Lambda^G$ be a subshift (over some finite alphabet $\Lambda$). Let
$\Delta$ be a finite set. Then $\xi:X\to\Delta^G$ is a topological factor map (i.e., a continuous and shift-equivariant map, the image is then a subshift over $\Delta$) if and only if there exists a finite set $F\subset G$ and a block code $\Xi:\Lambda^F\to\Delta$,
such that, for all $x\in X$ and $g\in G$ we have the equality
$$
(\xi(x))_g = \Xi(g(x)|_F).
$$
\end{thm}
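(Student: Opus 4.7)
The plan is to treat the two directions separately; the ``if'' direction is a routine verification, while the ``only if'' direction is the substantive part and rests on the compactness of $X$ together with the observation that the alphabet $\Delta$ is discrete.

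For the ``if'' direction, suppose a block code $\Xi\colon\Lambda^F\to\Delta$ is given and define $\xi$ coordinatewise by $(\xi(x))_g=\Xi(g(x)|_F)$. Continuity is immediate: if $x,x'\in X$ agree on $Fg$ then $g(x)|_F=g(x')|_F$, so $(\xi(x))_g=(\xi(x'))_g$, which means that $\xi$ is determined on any finite window by a fixed finite window in the source and is therefore continuous on the product topology. For shift-equivariance, using the convention $h(x)_g=x_{gh}$, one computes
\begin{equation*}
(\xi(h(x)))_g=\Xi(g(h(x))|_F)=\Xi((gh)(x)|_F)=(\xi(x))_{gh}=(h(\xi(x)))_g.
\end{equation*}

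For the ``only if'' direction, let $\xi\colon X\to\Delta^G$ be a continuous shift-equivariant map. Let $\pi_e\colon\Delta^G\to\Delta$ denote projection to the coordinate indexed by the unit $e\in G$, and consider the continuous map $\varphi=\pi_e\circ\xi\colon X\to\Delta$. Since $\Delta$ is finite and discrete, each preimage $U_\alpha=\varphi^{-1}(\alpha)$ with $\alpha\in\Delta$ is clopen, and these sets form a finite partition of $X$. The key step is to promote ``clopen'' to ``determined by a fixed finite window''. For each $x\in X$, since the cylinders form a clopen base of the topology of $X$, there exists a finite set $F_x\subset G$ and a block $w_x\in\Lambda^{F_x}$ such that $x\in[w_x]\subset U_{\varphi(x)}$. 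The cylinders $[w_x]$ cover $X$, so by compactness finitely many of them suffice. Let $F$ be the union of the corresponding coding horizons; then each $U_\alpha$ is a union of $F$-cylinders, which means that $\varphi(x)$ depends only on $x|_F$. This yields a well-defined block code $\Xi\colon\Lambda^F\to\Delta$ (extended arbitrarily to blocks that do not appear in $X$) satisfying $\varphi(x)=\Xi(x|_F)$.

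Finally, the formula is propagated to all coordinates by shift-equivariance. For any $x\in X$ and $g\in G$,
\begin{equation*}
(\xi(x))_g=(g(\xi(x)))_e=(\xi(g(x)))_e=\varphi(g(x))=\Xi(g(x)|_F),
\end{equation*}
which is the asserted formula. The main obstacle, as indicated above, is the passage from ``locally constant on each cylinder from a point-dependent window $F_x$'' to ``constant on cylinders over a single finite window $F$''; this is exactly where compactness of $X$ is used, and it is essential that $\Delta$ is finite so that $\varphi$ has only finitely many values to separate.
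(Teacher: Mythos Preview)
Your proof is correct and follows the standard argument for the Curtis--Hedlund--Lyndon theorem. Note, however, that the paper does not actually prove this statement: it is quoted as a classical result with a reference to \cite{H}, so there is no proof in the paper to compare against. Your argument---reducing to the coordinate map $\varphi=\pi_e\circ\xi$, using that the finite clopen partition $\{\varphi^{-1}(\alpha)\}_{\alpha\in\Delta}$ is refined by some cylinder partition with a common window $F$ via compactness, and then propagating to all coordinates by equivariance---is exactly the usual proof and is entirely sound.
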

The term ``block code'' refers to both $\Xi$ and $\xi$, depending on the context, and $F$ is called a coding horizon of $\xi$ (and of $\Xi$). Clearly, if $F$ is a coding horizon of $\xi$ (and of $\Xi$), so is any finite set containing $F$.
\smallskip

\begin{defn}\label{local rule}
Let $X\subset\Lambda^G$ be a subshift. For each $x\in X$ let $A_x\subset G$ and let $\tilde\varphi_x: A_x\to G$ be some function. For $X'\subset X$, we will say that the family $\{\tilde\varphi_x\}_{x\in X'}$ is \emph{determined by a block code} if there exists a block code $\Xi:\Lambda^F\to E$, where $E$ is a finite subset of $G$ (and so is $F$), such that if we denote
$$
\varphi_x(g) = \Xi(g(x)|_F),
$$
($x\in X, g\in G$),
then, for each $x\in X'$, the mapping from $A_x$ to $G$, defined by
$$
a\mapsto \varphi_x(a)a,
$$
($a\in A_x$), coincides with $\tilde\varphi_x$. The elements $\varphi_x(a)$ (belonging to $E$) will be called the \emph{multipliers} of $\tilde\varphi_x$.
\end{defn}

A simple way of checking, that a family $\{\tilde\varphi_x\}_{x\in X'}$ is determined by a block code, is finding a finite set $F$ such that,
for any $x_1,x_2\in X'$ and $a_1\in A_{x_1}, a_2\in A_{x_2}$,
\begin{equation}\label{clr}
a_1(x_1)|_F = a_2(x_2)|_F \ \implies \
\tilde\varphi_{x_1}(a_1)a_1^{-1} = \tilde\varphi_{x_2}(a_2){a_2}^{-1}.
\end{equation}

The following theorem connects the above definition with the relation of subequivalence.

\begin{thm}\label{tutka}
\begin{enumerate}
\item Let $X\subset\Lambda^G$ be a subshift. Consider the pair of disjoint clopen subsets $\mathsf A, \mathsf B\subset X$. Then $\mathsf A\preccurlyeq\mathsf B$ if and only if there exists a family of functions $\tilde\varphi_x:G\to G$ determined by a block code, such that for all $x\in X$, $\tilde\varphi_x$ restricted to $A_x=\{g:g(x)\in\mathsf A\}$ is an injection to $B_x=\{g:g(x)\in\mathsf B\}$.
\item If, moreover, $X$ is transitive with a transitive point $x^*$, then the above condition $\mathsf A\preccurlyeq\mathsf B$ is equivalent to the existence of just one function $\tilde\varphi_{x^*}$ determined by a block code, whose restriction to $A_{x^*}$ is an injection to $B_{x^*}$.
\end{enumerate}
\end{thm}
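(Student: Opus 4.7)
The plan is to prove (1) in both directions by explicitly converting between the combinatorial data of a subequivalence and a block code, and then to reduce (2) to the reverse direction of (1), using transitivity of $x^*$ in place of the family-wide injectivity hypothesis. For the forward direction of (1), let $\mathsf A\preccurlyeq \mathsf B$ be witnessed by a clopen partition $\mathsf A=\bigcup_{i=1}^k\mathsf A_i$ and elements $g_1,\dots,g_k\in G$ with the sets $g_i(\mathsf A_i)$ pairwise disjoint subsets of $\mathsf B$. Since each $\mathsf A_i$ is clopen in the subshift $X$, one can choose a common finite coding horizon $F\subset G$ for which every $\mathsf A_i$ is a union of $F$-cylinders, and define $\Xi\colon\Lambda^F\to E:=\{g_1,\dots,g_k,e\}$ by $\Xi(\mathbf B)=g_i$ when $[\mathbf B]\cap X\subset\mathsf A_i$ and $\Xi(\mathbf B)=e$ otherwise. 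Setting $\varphi_x(g)=\Xi(g(x)|_F)$ and $\tilde\varphi_x(g)=\varphi_x(g)g$, one checks that for $a\in A_x$ the point $a(x)$ lies in a unique $\mathsf A_i$, so $\tilde\varphi_x(a)=g_ia\in B_x$; if $\tilde\varphi_x(a_1)=\tilde\varphi_x(a_2)=h$ with $a_j(x)\in\mathsf A_{i_j}$, then $h(x)\in g_{i_1}(\mathsf A_{i_1})\cap g_{i_2}(\mathsf A_{i_2})$, which forces $i_1=i_2$ and hence $a_1=a_2$.

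Conversely, given a block code $\Xi\colon\Lambda^F\to E$ realizing the family, partition $\mathsf A$ into the clopen pieces $\mathsf A_h=\{x\in\mathsf A:\Xi(x|_F)=h\}$ indexed by $h\in E$. For $x\in\mathsf A_h$, one has $e\in A_x$ and $\tilde\varphi_x(e)=\Xi(x|_F)\cdot e=h\in B_x$, so $h(\mathsf A_h)\subset\mathsf B$. If $y=h_1(x_1)=h_2(x_2)$ with $x_j\in\mathsf A_{h_j}$ and $h_1\ne h_2$, then both $h_j^{-1}$ lie in $A_y$ and
\[
\tilde\varphi_y(h_j^{-1})=\Xi(h_j^{-1}(y)|_F)h_j^{-1}=\Xi(x_j|_F)h_j^{-1}=h_jh_j^{-1}=e,
\]
contradicting injectivity of $\tilde\varphi_y|_{A_y}$; hence the translates $h(\mathsf A_h)$ are pairwise disjoint and $\mathsf A\preccurlyeq\mathsf B$.

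The forward part of (2) is immediate from (1). For the reverse, form the same clopen partition $\mathsf A_h$ from the given block code; the task is to establish $h(\mathsf A_h)\subset\mathsf B$ together with pairwise disjointness of the translates from the hypothesis at $x^*$ alone. Both are assertions that certain clopen sets are empty, and transitivity of $x^*$ is the decisive tool: if $a(x^*)\in\mathsf A_h\setminus h^{-1}(\mathsf B)$, then $a\in A_{x^*}$ with $\varphi_{x^*}(a)=h$ but $ha(x^*)\notin\mathsf B$, contradicting $\tilde\varphi_{x^*}(a)\in B_{x^*}$; likewise, if $\mathsf A_{h_1}\cap h_1^{-1}h_2(\mathsf A_{h_2})\ne\emptyset$ for $h_1\ne h_2$ (which must occur whenever $h_1(\mathsf A_{h_1})\cap h_2(\mathsf A_{h_2})\ne\emptyset$), transitivity produces $a\in G$ with $a(x^*)$ in both factors, yielding distinct elements $a,\,h_2^{-1}h_1a\in A_{x^*}$ whose common image under $\tilde\varphi_{x^*}$ equals $h_1a$, again contradicting injectivity. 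The main subtlety is the bookkeeping with the coding horizon---choosing $F$ large enough that every $\mathsf A_h$ is $F$-measurable and then carefully tracking that $\varphi_y(h^{-1})$ depends only on $h^{-1}(y)|_F$; the crucial conceptual step, which I expect to be the most delicate, is (2)'s conversion of a single-orbit injectivity hypothesis into global disjointness via transitivity.
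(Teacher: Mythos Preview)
Your proof is correct. Part (1) is essentially identical to the paper's argument: the only cosmetic differences are that you build the block code directly from a common coding horizon for the clopen sets $\mathsf A_i$ (with default value $e$), whereas the paper invokes the Curtis--Hedlund--Lyndon theorem and uses $g_1$ as the default; the injectivity and disjointness checks are the same computation in both.

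For the reverse direction of (2) you take a genuinely different route. The paper proves the stronger intermediate statement that if the block code $\Xi$ determines an injection $\tilde\varphi_{x^*}\colon A_{x^*}\to B_{x^*}$ at the transitive point, then the \emph{same} block code determines injections $\tilde\varphi_x\colon A_x\to B_x$ for every $x\in X$; it then simply quotes part~(1). This is done by approximating an arbitrary $x$ by a shift $g(x^*)$ well enough that the values of $\Xi$ and the membership in $\mathsf A,\mathsf B$ agree on the relevant finite window, and then transferring injectivity and the target condition from $x^*$ to $x$. Your argument bypasses this and goes straight to subequivalence: you form the clopen pieces $\mathsf A_h$ as in (1) and observe that each of the two failure modes (some $\mathsf A_h\setminus h^{-1}(\mathsf B)\neq\emptyset$, or some $\mathsf A_{h_1}\cap h_1^{-1}h_2(\mathsf A_{h_2})\neq\emptyset$) is witnessed by a nonempty clopen set, hence by transitivity is visited by the orbit of $x^*$, which immediately contradicts the single hypothesis on $\tilde\varphi_{x^*}$. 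Your approach is shorter and more direct for the theorem as stated; the paper's approach yields the additional information that the block code itself works uniformly over $X$, which is a stronger conclusion (and is in the spirit of how the result is later applied), but is not needed for the bare equivalence claimed in (2).
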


\begin{proof}
(1) Firstly suppose that $\mathsf A\preccurlyeq\mathsf B$. Let $\{\mathsf A_1,\mathsf A_2,\dots,\mathsf A_k\}$ be the clopen partition of $\mathsf A$ and let $g_1,g_2,\dots, g_k$ be the elements of $G$ such that the sets $\mathsf B_i = g_i(\mathsf A_i)$ are disjoint subsets of $\mathsf B$. Let $E = \{g_1,g_2,\dots,g_k\}$. Consider the mapping $\xi:X\to E^G$ given by the following rule
$$
(\xi(x))_g = \begin{cases}
g_i &\text{ if \ }g(x)\in\mathsf A_i, \ i=1,2,\dots,k,\\
g_1 &\text{ otherwise},
\end{cases}
$$
($g\in G$).
Since the sets $\mathsf A_i$ and $X\setminus \mathsf A$ are clopen in $X$, the above map is continuous and, as easily verified, it is shift-equivariant. Thus, it is a \tl\ factor map from $X$ into $E^G$. By Theorem \ref{CHL}, there exists a block code $\Xi:\Lambda^F\to E$ (with some finite coding horizon $F$) satisfying, for all $x\in X$ and $g\in G$, the equality
$$
(\xi(x))_g = \Xi(g(x)|_F).
$$
For each $x\in X$ we define $\varphi_x:G\to E$ by $\varphi_x(g)= (\xi(x))_g$ and $\tilde\varphi_x:G\to G$ by $\tilde\varphi_x(g)=\varphi_x(g)g$, i.e., the family of maps $\{\tilde\varphi_x\}_x$ is determined by the block code $\Xi$. We need to show that, for every $x\in X$, $\tilde\varphi_x$ restricted to $A_x$ is an injection to $B_x$.

Throughout this paragraph we fix some $x\in X$ and skip the subscript $x$ in the writing of $A_x, B_x$, $\varphi_x$ and $\tilde\varphi_x$. For $i=1,2,\dots,k$ let $A_i = A\cap\varphi^{-1}(g_i)$. Clearly, $\{A_1,A_2,\dots,A_k\}$ is a partition of $A$ and for every $a\in A$ we have:
$$
a\in A_i \iff \varphi(a)=g_i \iff (\xi(x))_a = g_i \iff  a(x) \in \mathsf A_i, \ \  (i=1,2,\dots,k).
$$
Further, $a(x)\in\mathsf A_i$ yields $g_ia(x)\in\mathsf B_i\subset \mathsf B$, which implies that $g_ia\in B$. Since $g_ia = \varphi(a)a = \tilde\varphi(a)$, we have shown that $\tilde\varphi$ sends $A$ into $B$. For injectivity of the restriction $\tilde\varphi|_A$, observe that if $a_1\neq a_2$ and both elements belong to the same set $A_i$ then their images by $\tilde\varphi$, equal to $g_ia_1$ and $g_ia_2$, respectively, are different by cancellativity. If $a_1\in A_i$ and $a_2\in A_j$ with $i\neq j$, then
$\tilde\varphi(a_1)(x)=g_ia_1(x)\in\mathsf B_i$ and $\tilde\varphi(a_2)(x)=g_ja_2(x)\in\mathsf B_j$. Since $\mathsf B_i$ and $\mathsf B_j$ are disjoint, the elements $\tilde\varphi(a_1)$ and $\tilde\varphi(a_2)$ must be different.

\smallskip
Now suppose that there exist injections $\tilde\varphi_x:A_x\to B_x$ (for all $x\in X$) determined by a block code $\Xi:\Lambda^F\to E=\{g_1,g_2,\dots,g_k\}\subset G$, where the elements $g_i$ are written without repetitions, i.e., are different for different indices $i=1,2,\dots,k$. That is, denoting, for each $g\in G$,
$$
\varphi_x(g) = \Xi(g(x)|_F),
$$
we obtain maps $\varphi_x$ such that $g\mapsto\varphi_x(g)g$ restricted to $A_x$ coincides with $\tilde\varphi_x$. Now, for each $i=1,2,\dots,k$ we define
$$
\mathsf A_i=\mathsf A\cap [\Xi^{-1}(g_i)]=\{x\in\mathsf A:\Xi(x|_F)=g_i\}.
$$
Clearly, $\{\mathsf A_1,\mathsf A_2,\dots,\mathsf A_k\}$ is a clopen partition of $\mathsf A$. Let $x\in \mathsf A_i$ (for some $i=1,2,\dots, k$). Then $e\in A_x$ and thus $\tilde\varphi_x(e)\in B_x$, i.e., $\tilde\varphi_x(e)(x)\in\mathsf B$. But $\tilde\varphi_x(e)=\varphi_x(e)=\Xi(x|_F)=g_i$. We have shown that $g_i(\mathsf A_i)\subset\mathsf B$.

It remains to show that the sets $g_i(\mathsf A_i)$ are disjoint. Suppose that for some $i\neq j$ there exists $x\in X$ belonging to both $g_i(\mathsf A_i)$ and $g_j(\mathsf A_j)$.
This implies that $g_i^{-1}$ and $g_j^{-1}$ both belong to $A_x$, and $\varphi_x(g_i^{-1}) = g_i$, $\varphi_x(g_j^{-1}) = g_j$. But then
$$
\tilde\varphi_x(g_i^{-1})= \varphi_x(g_i^{-1}) g^{-1}_i= g_ig^{-1}_i=e\text{ \ and \ }\tilde\varphi_x(g_j^{-1})=\varphi_x(g_j^{-1}) g^{-1}_j= g_jg^{-1}_j=e,
$$
which contradicts the injectivity of $\tilde\varphi_x$ on $A_x$.

\smallskip
(2) In view of (1), it suffices to show that if a block code $\Xi:\Lambda^F\to E$ determines an injection $\tilde\varphi_{x^*}:A_{x^*}\to B_{x^*}$ then it also determines (as usual, by the formulas $\varphi_x(g)=\Xi(g(x)|_F)$ and $\tilde\varphi_x(a)=\varphi_x(a)a$\,)
injections $\tilde\varphi_x:A_x\to B_x$ for all $x\in X$.
Fix some $x\in X$ and let $a_1\neq a_2$ belong to $A_x$, i.e., $a_1(x), a_2(x)\in\mathsf A$. Since $x^*$ is a transitive point,
a point $g(x^*)$ (for some $g\in G$) is so close to $x$ that:
\begin{enumerate}
	\item[(a)] $a_1g(x^*),\ a_2g(x^*)\in\mathsf A$,
	\item[(b)] the blocks $g(x^*)|_{Fa_1\cup Fa_2}$ and $x|_{Fa_1\cup Fa_2}$ are equal,
	\item[(c)] $(\forall f\in Ea_1\cup Ea_2) \ \ fg(x^*)\in\mathsf B \iff f(x)\in\mathsf
	B$.
\end{enumerate}
By (a), both $a_1g$ and $a_2g$ belong to $A_{x^*}$. Thus $\tilde\varphi_{x^*}(a_1g)$
and $\tilde\varphi_{x^*}(a_2g)$ are \emph{different} elements of $B_{x^*}$. But
$$
\tilde\varphi_{x^*}(a_1g)=\varphi_{x^*}(a_1g)a_1g\text{ \ \ and \ \ }\tilde\varphi_{x^*}(a_2g)=\varphi_{x^*}(a_2g)a_2g,
$$
which, after canceling $g$, yields
$$
\varphi_{x^*}(a_1g)a_1\neq\varphi_{x^*}(a_2g)a_2.
$$
On the other hand, by (b), $x|_{Fa_1} = g(x^*)|_{Fa_1}$, whence $a_1(x)|_F = a_1g(x^*)|_F$, and
$$
\varphi_{x}(a_1)=\Xi(a_1(x)|_F) = \Xi(a_1g(x^*)|_F) = \varphi_{x^*}(a_1g),
$$
which means that $\tilde\varphi_{x}(a_1)=\varphi_{x}(a_1)a_1=\varphi_{x^*}(a_1g)a_1$.
Analogously, $\tilde\varphi_{x}(a_2)=\varphi_{x^*}(a_2g)a_2$. We have shown that
$\tilde\varphi_x(a_1)\neq \tilde\varphi_x(a_2)$, i.e., $\tilde\varphi_x$ restricted to $A_x$ is injective.

Further, the fact that $\tilde\varphi_{x^*}(a_1g)\in B_{x^*}$ yields
$$
\mathsf B\ni\tilde\varphi_{x^*}(a_1g)(x^*)=\varphi_{x^*}(a_1g)a_1g(x^*)=
\varphi_{x}(a_1)a_1g(x^*).
$$
Since $\varphi_x(a_1)a_1\in Ea_1$, by (c) we get
$$
\mathsf B\ni\varphi_{x}(a_1)a_1(x)=\tilde\varphi_x(a_1)(x),
$$
and hence $\tilde\varphi_x(a_1)\in B_x$. We have shown that $\tilde\varphi_x$ sends $A_x$ injectively to $B_x$.
\end{proof}

\subsection{Banach density comparison property of a group}
\begin{defn}
We say that $G$ has the \emph{Banach density comparison property} if whenever
$A\subset G$ and $B\subset G$ are disjoint and satisfy $\underline D(B,A)>0$ then,
in the subshift \,$Y^{AB}$ there exists an injection $\tilde\varphi:A\to B$ determined by a block code (recall that $y^{AB}$ is a transitive point in $Y^{AB}$ and $A=A_{y^{AB}},\ B=B_{y^{AB}}$, so the above condition is the same as that in Theorem \ref{tutka} (2)).
\end{defn}

\begin{rem}
It is immediate to see that any finite group has the Banach density comparison property.
\end{rem}

%We continue to consider two subsets $A,B$ of $G$. We have the following relation of upper and lower Banach densities, and the Banach density advantage, with invariant measures.

We can now completely characterize the comparison property of a countable amenable group in terms of the Banach density comparison property.

\begin{thm}\label{ujowe}
A countable amenable group $G$ has the comparison property if and only if it has the Banach density comparison property.
\end{thm}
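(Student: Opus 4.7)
For the easy direction $(\Rightarrow)$, I would assume comparison and apply it directly to the transitive subshift $Y^{AB}$: by Proposition \ref{prop}(2), every $\mu\in\M^{AB}$ satisfies $\mu([\mathsf 2])-\mu([\mathsf 1])\ge\underline D(B,A)>0$, so comparison on the $G$-action on $Y^{AB}$ yields $[\mathsf 1]\preccurlyeq[\mathsf 2]$; since $y^{AB}$ is a transitive point, Theorem \ref{tutka}(2) then produces the single block-code-determined injection $\tilde\varphi:A\to B$ required by the Banach density comparison property.

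For the harder direction $(\Leftarrow)$, my plan is to reduce to a subshift and then exhibit a single ``super-point'' whose orbit closure both absorbs $Y$ and retains a positive uniform Banach density advantage. Precisely, given $G$ acting on a \zd\ compact metric $X$ with clopen $\mathsf A,\mathsf B$ (disjoint by Remark \ref{disjoint}) satisfying $\mu(\mathsf A)<\mu(\mathsf B)$ for every \im\ $\mu$, I would first pass via $\pi_{\mathsf A\mathsf B}$ to the subshift $Y:=Y_{\mathsf A\mathsf B}$ and reduce the problem to producing $[\mathsf 1]\preccurlyeq[\mathsf 2]$ there (any witnessing clopen partition and multipliers pull back through the $G$-equivariant map $\pi_{\mathsf A\mathsf B}$). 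By Remark \ref{from0}, Proposition \ref{prop}(1) and Lemma \ref{bbb}, I extract a uniform $\delta>0$ and a finite $F^*\subseteq G$ with $\underline D_{F^*}(B_y,A_y)>\delta$ for every $y\in Y$.

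The key step will be constructing $y^*\in\{\mathsf 0,\mathsf 1,\mathsf 2\}^G$ with (a) $Y\subseteq\overline{G\cdot y^*}$ and (b) $\underline D(B_{y^*},A_{y^*})>0$. I would fix a countable dense sequence $\{y_k\}_{k\in\na}$ in $Y$ and an increasing F\o lner sequence $(F_n)$ exhausting $G$ with each $F_n$ being $(F^*,2^{-n})$-invariant, then use Theorem \ref{ourtilings} to assemble a pairwise disjoint family $\{F_ng_{k,n}:(k,n)\in\na^2\}$ covering $G$ modulo a Banach-density-zero remainder, arranged so that every $k$ recurs for arbitrarily large $n$. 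Setting $y^*_{fg_{k,n}}:=(y_k)_f$ for $f\in F_n$ (and $y^*$ fixed arbitrarily on the residue), property (a) holds because, for each fixed $k$, the translates $g_{k,n}^{-1}(y^*)$ agree with $y_k$ on $F_n\nearrow G$ and hence converge to $y_k$ in the product topology, so by density $\overline{G\cdot y^*}\supseteq Y$. Property (b) follows because Lemma \ref{bdc} forces a surplus of $\mathsf 2$'s over $\mathsf 1$'s of at least $(\delta-4\cdot 2^{-n})|F_n|$ inside each block $F_ng_{k,n}$, and averaging over any large F\o lner window—absorbing the density-zero residue and the small boundary losses—yields $\underline D(B_{y^*},A_{y^*})\ge\delta/2$.

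Having built $y^*$, I set $A:=A_{y^*}$, $B:=B_{y^*}$ so that $y^*=y^{AB}$ and $Y^{AB}=\overline{G\cdot y^*}$; combining (b) with Proposition \ref{prop}(2) validates $\underline D(B,A)>0$, so the Banach density comparison property produces a block code $\Xi$ whose induced injection works at the transitive point $y^*$. Theorem \ref{tutka}(2) then automatically promotes $\Xi$ to a single code determining injections $\tilde\varphi_y:A_y\to B_y$ for every $y\in Y^{AB}$, hence by (a) for every $y\in Y$; Theorem \ref{tutka}(1) concludes $[\mathsf 1]\preccurlyeq[\mathsf 2]$ in $Y$ and pull-back gives $\mathsf A\preccurlyeq\mathsf B$ in $X$. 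The hard part will be arranging the placements $g_{k,n}$ so that (a) and (b) are attained \emph{simultaneously}: the blocks must be pairwise disjoint, exhaust $G$ up to a Banach-density-zero residue, and sit at shapes with enough F\o lner invariance for the surplus estimate from Lemma \ref{bdc} to integrate into a nontrivial global lower bound—this is precisely where the tiling machinery of Theorem \ref{ourtilings} (rather than mere quasitilings) is indispensable.
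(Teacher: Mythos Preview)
Your overall architecture matches the paper exactly: the easy direction via Proposition~\ref{prop}(2) and Theorem~\ref{tutka}(2) is identical, and for the hard direction you correctly reduce to the subshift $Y=Y_{\mathsf A\mathsf B}$ and recognize that the crux is to manufacture a single transitive point $y^*$ whose orbit closure contains $Y$ and which still has positive Banach density advantage, after which Theorem~\ref{tutka}(2) closes the argument.

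The gap is in your construction of $y^*$. First, Theorem~\ref{ourtilings} does not produce what you invoke: it gives, for a single pair $(K,\eps)$, a tiling of $G$ by a \emph{finite} collection of $(K,\eps)$-invariant shapes. It does not give a disjoint family $\{F_ng_{k,n}:(k,n)\in\na^2\}$ with the prescribed \emph{infinite} family of shapes $F_n$ covering $G$ up to a Banach-density-zero residue. Second, even granting such a family, your density estimate ``averaging over any large F\o lner window\dots yields $\underline D(B_{y^*},A_{y^*})\ge\delta/2$'' is unjustified: a F\o lner window $F_mg$ may be cut by tiles $F_ng_{k,n}$ with $n\gg m$, and on such a partial intersection you only see a fragment of some $y_k$, for which no surplus of $\mathsf 2$'s over $\mathsf 1$'s is guaranteed. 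The $F^*$-based estimate you extracted applies to points of $Y$, not to the patchwork $y^*$, and controlling the density of ``bad'' positions (where $F^*h$ straddles tile boundaries or the residue) requires an argument you have not supplied.

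The paper sidesteps all of this with a much simpler device: it does \emph{not} try to cover $G$. It places one block $\mathbf B_n$ (a restriction $y_n|_{F_n}$ of some $y_n\in Y$) at a single location $F_ng_n$, choosing the $g_n$ so that the enlarged sets $F_nF_n^{-1}F_ng_n$ are pairwise disjoint and the $F_n$ grow so fast that $\sum_{i<n}|F_i|<\frac{1-\gamma}{2}|F_n|$; the entire remainder of $G$ is filled with the symbol $\mathsf 2$. This makes property (a) automatic (the blocks $\mathbf B_n$ are chosen dense in $Y$), and property (b) becomes an easy case analysis: a test window $F_{n_k}g$ either misses all large blocks (so it sees mostly $\mathsf 2$'s) or meets exactly one (and then a direct comparison with the corresponding $y_{m_0}$ applies). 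No tiling of $G$ is needed, and Theorem~\ref{ourtilings} plays no role in this proof.
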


\begin{proof}
The theorem holds trivially for finite groups, so we can restrict to infinite groups $G$. Assume that $G$ has the comparison property and let $A,B\subset G$ be disjoint and satisfy $\underline D(B,A)>0$. Then, by Proposition \ref{prop} (2), taking in the subshift $Y^{AB}$ the clopen sets: $\mathsf A=[\mathsf 1]$ and $\mathsf B=[\mathsf 2]$, we have $\inf_{\mu\in\M^{AB}}(\mu(\mathsf B)-\mu(\mathsf A))>0$. By the assumption, $\mathsf A\preccurlyeq\mathsf B$. Now, a direct application of Theorem \ref{tutka} (2) completes the proof of the Banach density comparison property.

Let us pass to the proof of the opposite implication.
Suppose that a countable amenable group $G$ having the Banach density comparison property acts on a \zd\ compact metric space $X$, in which we have selected two clopen sets $\mathsf A$ and $\mathsf B$ satisfying, for each \im\ $\mu$ on $X$, the inequality $\mu(\mathsf A)<\mu(\mathsf B)$.
By Remark \ref{disjoint}, we can assume that $\mathsf A$ and $\mathsf B$ are disjoint; and
by Remark \ref{from0}, we have $\inf_{\mu\in\M_G(X)}(\mu(\mathsf B)-\mu(\mathsf A))>0$. This translates to $\inf_{\nu\in\M_{\mathsf A\mathsf B}}(\nu([\mathsf 2])-\nu([\mathsf 1]))>0$ in the factor subshift $Y_{\mathsf A\mathsf B}$. By Proposition \ref{prop} (1) applied to this subshift, we get $\underline D([\mathsf 2],[\mathsf 1])>0$.

Since we intend to use the Banach density comparison property and Theorem~\ref{tutka}~(2), we need to embed $Y_{\mathsf A\mathsf B}$ in a transitive subshift $Y$ (over the alphabet $\{\mathsf0,\mathsf1,\mathsf2\}$). We also desire a transitive point $y^*$ which satisfies
$\underline D(B_{y^*},A_{y^*})>0$. Below we present the construction of such a transitive subshift.

Choose some positive $\gamma<\underline D([\mathsf 2],[\mathsf 1])$. Fix an increasing (w.r.t. set inclusion) F\o lner \sq\ $(F_n)$ such that $\bigcup_{n=1}^\infty F_n=G$. By choosing a sub\sq\ we can assume that $\sum_{i=1}^{n-1}|F_i|<\frac{1-\gamma}2|F_n|$ for every $n$ (in this place we use the assumption that $G$ is infinite). Next, we need to find a \sq\ of blocks $\mathbf B_n\in\{\mathsf0,\mathsf1,\mathsf2\}^{F_n}$ each appearing as $y_n|_{F_n}$ in some $y_n\in Y_{\mathsf A\mathsf B}$, such that every $y\in Y_{\mathsf A\mathsf B}$ is a coordinatewise limit of a subsequence $\mathbf B_{n_k}$ of the selected blocks. Finally, we need to find a \sq\ $g_n$ of elements of $G$ such that the sets $F_nF_n^{-1}F_ng_n$ are disjoint. All the above steps are possible and easy. Once they are completed, $y^*$ is defined by the rule: for each $n$ and $f\in F_n$ we put $y^*_{fg_n}=\mathbf B_n (f)$, and for all $g$ outside the union $\bigcup_{n=1}^\infty F_ng_n$, we put $y^*_g = \mathsf 2$. We let $Y$ be the closure of the orbit of $y^*$.

The following properties hold:
\begin{itemize}
	\item $Y\supset Y_{\mathsf A\mathsf B}$,
	\item $\underline D(B_{y^*},A_{y^*})\ge \gamma>0$.
\end{itemize}
The first property is obvious by construction: each $y\in Y_{\mathsf A\mathsf B}$ is the limit of a sequence of blocks $\mathbf B_{n_k}$, hence it is also the limit of the \sq\ of elements $g_{n_k}(y^*)$, and thus it belongs to $Y$.

We need to prove the latter property. By the definition of $\underline D([\mathsf 2],[\mathsf 1])$ in the subshift $Y_{\mathsf A\mathsf B}$, there exist arbitrarily large indices $n_k$ such that
\begin{equation}\label{noco}
|\{f\in F_{n_k}:y_{fg}=\mathsf 2\}|-|\{f\in F_{n_k}:y_{fg}=\mathsf 1\}|\ge\gamma|F_{n_k}|,
\end{equation}
for all $y\in Y_{\mathsf A\mathsf B}$ and $g\in G$. It suffices to show an analogous property for $y^*$.

Fix some $g\in G$ and observe the block $y^*|_{F_{n_k}g}$. The set $F_{n_k}g$ either does not intersect any of the sets $F_mg_m$ with $m\ge n_k$ or intersects one of them (say $F_{m_0}g_{m_0}$ with $m_0\ge n_k$).

In the first case, the block $y^*|_{F_{n_k}g}$ consists mostly of symbols $\mathsf 2$; as all symbols different from $\mathsf 2$ appear in $y^*$ only over the intersection of $F_{n_k}g$ with the union of the sets $F_ig_i$ with $i<n_k$, the percentage of such symbols in $y^*|_{F_{n_k}g}$ is at most
$$
\frac1{|F_{n_k}g|}\sum_{i=1}^{n_k-1}|F_ig_i|=\frac1{|F_{n_k}|}\sum_{i=1}^{n_k-1}|F_i|<\frac{1-\gamma}2.
$$
Thus, in this case we have
\begin{equation}\label{noco1}
|\{f\in F_{n_k}:y^*_{fg}=\mathsf 2\}|-|\{f\in F_{n_k}:y^*_{fg}=\mathsf 1\}|\ge\gamma|F_{n_k}|.
\end{equation}
In the latter case, we have $g\in F_{n_k}^{-1}F_{m_0}g_{m_0}$, hence $F_{n_k}g\subset F_{n_k}F_{n_k}^{-1}F_{m_0}g_{m_0}\subset F_{m_0}F_{m_0}^{-1}F_{m_0}g_{m_0}$. By disjointness of the sets $F_nF_n^{-1}F_ng_n$, $F_{n_k}g$ does not intersect any set $F_nF_n^{-1}F_ng_n$ (and hence also $F_ng_n$) with $n\neq m_0$. We will compare the block $y^*|_{F_{n_k}g}$ with the block $y_{m_0}|_{F_{n_k}gg_{m_0}^{-1}}$. We can write
$$
F_{n_k}g = (F_{n_k}g\cap F_{m_0}g_{m_0}) \cup (F_{n_k}g\setminus F_{m_0}g_{m_0}),
$$
and likewise
$$
F_{n_k}gg_{m_0}^{-1} = (F_{n_k}gg_{m_0}^{-1}\cap F_{m_0})\cup (F_{n_k}gg_{m_0}^{-1}\setminus F_{m_0}).
$$
By the definition of $y^*$, the block $y^*|_{F_{n_k}g\cap F_{m_0}g_{m_0}}$ is identical to $y_{m_0}|_{F_{n_k}gg_{m_0}^{-1}\cap F_{m_0}}$, while $y^*|_{F_{n_k}g\setminus F_{m_0}g_{m_0}}$ contains just the symbols $\mathsf 2$. Thus
the difference
$$
|\{f\in F_{n_k}:y^*_{fg}=\mathsf 2\}|-|\{f\in F_{n_k}:y^*_{fg}=\mathsf 1\}|
$$
is not smaller than
$$
|\{f\in F_{n_k}: (y_{m_0})_{fgg_{m_0}^{-1}}=\mathsf 2\}|-|\{f\in F_{n_k}:(y_{m_0})_{fgg_{m_0}^{-1}}=\mathsf 1\}|.
$$
Since $y_{m_0}\in Y_{\mathsf A\mathsf B}$, \eqref{noco} implies that the latter expression is at least $\gamma|F_{n_k}|$. We have proved \eqref{noco1} also in this case.

We have proved that $\underline D(B_{y^*},A_{y^*})\ge\gamma>0$. Now, the Banach density comparison property of $G$ implies that there exists an injection $\tilde\varphi$ from $A_{y^*}$ to $B_{y^*}$ determined by a block code. Thus, by Theorem~\ref{tutka} (2), we get $[\mathsf 1]\preccurlyeq[\mathsf 2]$ in the transitive subshift $Y$, and by restriction to a closed \inv\ set the same holds in $Y_{\mathsf A\mathsf B}$, which, by an application of $\pi_{\mathsf A\mathsf B}^{-1}$, translates to $\mathsf A\preccurlyeq\mathsf B$ in~$X$.
\end{proof}

\subsection{Comparison property via finitely generated subgroups}

\begin{lem}\label{supinf}
Let $G$ act on a \zd\ compact metric space $X$. Let $\mathsf A,\mathsf B\subset X$ be two disjoint clopen sets. Then
$$
\sup_H \inf_{\mu\in\M_H(X)}(\mu(\mathsf B)-\mu(\mathsf A))=\sup_{H'} \inf_{\mu\in\M_{H'}(X)}(\mu(\mathsf B)-\mu(\mathsf A))=\inf_{\mu\in\M_G(X)}(\mu(\mathsf B)-\mu(\mathsf A))
$$
where $H$ ranges over all finitely generated subgroups of $G$ and $H'$ ranges over all subgroups of $G$.
\end{lem}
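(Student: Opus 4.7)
The plan is to observe that one chain of inequalities is essentially free, and then to establish the reverse inequality by a compactness/diagonal argument. First, whenever $H\le H'\le G$ we have $\M_G(X)\subset\M_{H'}(X)\subset\M_H(X)$, because any measure invariant under a larger group is invariant under its subgroups. Consequently the infimum of $\mu(\mathsf B)-\mu(\mathsf A)$ is non-decreasing as the group shrinks, and taking $H$ finitely generated is more restrictive than taking $H'$ arbitrary, so
$$
\sup_H \inf_{\mu\in\M_H(X)}(\mu(\mathsf B)-\mu(\mathsf A))\le\sup_{H'}\inf_{\mu\in\M_{H'}(X)}(\mu(\mathsf B)-\mu(\mathsf A))\le\inf_{\mu\in\M_G(X)}(\mu(\mathsf B)-\mu(\mathsf A))
$$
is immediate.

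For the reverse inequality, I would argue by contradiction and show directly that
$$\sup_H \inf_{\mu\in\M_H(X)}(\mu(\mathsf B)-\mu(\mathsf A))\ge\inf_{\mu\in\M_G(X)}(\mu(\mathsf B)-\mu(\mathsf A))$$
already with $H$ ranging over finitely generated subgroups, which is the stronger of the two required inequalities and implies both equalities at once. Suppose the strict inequality
$$
\sup_H\inf_{\mu\in\M_H(X)}(\mu(\mathsf B)-\mu(\mathsf A))<\inf_{\mu\in\M_G(X)}(\mu(\mathsf B)-\mu(\mathsf A))-\eps
$$
holds for some $\eps>0$. Enumerate $G=\{g_1,g_2,\dots\}$ and set $H_n=\langle g_1,\dots,g_n\rangle$. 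Each $H_n$ is amenable (as a subgroup of the amenable group $G$), so $\M_{H_n}(X)\neq\emptyset$, and the contradictory assumption lets me pick $\mu_n\in\M_{H_n}(X)$ with
$$
\mu_n(\mathsf B)-\mu_n(\mathsf A)<\inf_{\mu\in\M_G(X)}(\mu(\mathsf B)-\mu(\mathsf A))-\eps.
$$

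Next I would extract a weak-star subsequential limit $\mu_\infty$ of $(\mu_n)$, which exists by compactness of the space of Borel probability measures on $X$. For any fixed $k$, all $\mu_n$ with $n\ge k$ are $g_k$-invariant, hence so is $\mu_\infty$; letting $k$ vary shows $\mu_\infty\in\M_G(X)$. Since $\mathsf A,\mathsf B$ are clopen, the characteristic functions $\mathbf 1_\mathsf A,\mathbf 1_\mathsf B$ are continuous, so $\mu\mapsto \mu(\mathsf B)-\mu(\mathsf A)$ is weak-star continuous, yielding
$$
\mu_\infty(\mathsf B)-\mu_\infty(\mathsf A)\le\inf_{\mu\in\M_G(X)}(\mu(\mathsf B)-\mu(\mathsf A))-\eps,
$$
which contradicts the definition of the infimum on the right.

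There is really no serious obstacle: the argument boils down to the two standard facts that weak-star limits of measures invariant under an exhausting chain of subgroups are invariant under the union, and that on clopen sets the measure is a weak-star continuous function of $\mu$. The only point needing a brief mention is non-emptiness of $\M_{H_n}(X)$, which uses that finitely generated subgroups of an amenable group are amenable.
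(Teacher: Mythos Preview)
Your proof is correct, but it takes a genuinely different route from the paper's. (One verbal slip: you say the infimum is ``non-decreasing as the group shrinks'', whereas it is non-decreasing as the group \emph{grows}; fortunately, the displayed chain of inequalities you write is correct, so this is harmless.)

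The paper proves the nontrivial inequality via its Banach density machinery: by Proposition~\ref{prop}(1) the quantity $\inf_{\mu\in\M_G(X)}(\mu(\mathsf B)-\mu(\mathsf A))$ equals $\underline D(\mathsf B,\mathsf A)=\sup_F\inf_{x\in X}\underline D_F(B_x,A_x)$, so one can choose a single finite set $F$ realizing the supremum up to $\delta$, take $H=\langle F\rangle$, and then read off the same density inequality for the induced $H$-action. This is constructive in that it exhibits a specific finitely generated subgroup $H$ depending only on $F$, and it reuses results already developed in the paper.

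Your argument is softer and more self-contained: exhaust $G$ by finitely generated subgroups $H_n$, pick near-minimizers $\mu_n\in\M_{H_n}(X)$, and pass to a weak-star limit, which is $G$-invariant because invariance under each fixed $g_k$ is a closed condition and holds eventually. You use only amenability of subgroups, weak-star compactness of probability measures on $X$, and continuity of $\mu\mapsto\mu(\mathsf B)-\mu(\mathsf A)$ for clopen $\mathsf A,\mathsf B$. This avoids the entire Banach density apparatus and would work verbatim for any weak-star continuous affine functional in place of $\mu(\mathsf B)-\mu(\mathsf A)$. The trade-off is that your proof is nonconstructive (it does not single out a particular $H$) and does not yield the quantitative content that the paper's approach carries along for free.
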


\begin{proof}
The inequality $\le$ on the left hand side is trivial, while the second inequality $\le$ follows easily from the fact that every measure invariant under the action of $G$ is invariant under the action of $H'$ for any subgroup $H'$ of $G$.

We need to prove the last missing inequality.
By Proposition \ref{prop} (1), we have $\inf_{\mu\in\M_G(X)}(\mu(\mathsf B)-\mu(\mathsf A))=\underline D(\mathsf B,\mathsf A)$. Then, for any positive $\delta$, there exists a finite set $F$ such that
$$
\frac1{|F|}(|B_x\cap Fg|-|A_x\cap Fg|)>\underline D(\mathsf B,\mathsf A)-\delta
$$
for every $x\in X$ and all $g\in G$, in particular for all $g\in H$, where $H$ is the subgroup generated by $F$. Thus, for every $x\in X$, we have
$$
\inf_{g\in H}\frac1{|F|}(|B_x\cap Fg|-|A_x\cap Fg|)\ge\underline D(\mathsf B,\mathsf A)-\delta.
$$
Since $F\subset H$ and $g\in H$, we have $A_x\cap Fg = (A_x\cap H)\cap Fg$. Note that $A_x\cap H$ equals the set $A_x$ defined for the induced action of $H$ on $X$ (and analogously for $B_x$). Thus, the expression on the left hand side above equals $\underline D_F(B_x,A_x)$ evaluated for the action of $H$ on $X$. Now, Lemma \ref{bd} implies $\underline D(B_x,A_x)\ge\underline D(\mathsf B,\mathsf A)-\delta$ for every $x\in X$ (where $\underline D(B_x,A_x)$ is evaluated for the action of $H$ on $X$, and $\underline D(\mathsf B,\mathsf A)$ is evaluated for the action of $G$ on $X$), and Proposition \ref{prop} (1) yields
$$
\inf_{\mu\in\M_H(X)}(\mu(\mathsf B)-\mu(\mathsf A))\ge\underline D(\mathsf B,\mathsf A)-\delta=\inf_{\mu\in\M_G(X)}(\mu(\mathsf B)-\mu(\mathsf A))-\delta.
$$
After applying the supremum over $H$ on the left we can ignore $\delta$ on the right.
\end{proof}

\begin{prop}\label{44}
A countable amenable group $G$ has the comparison property if every finitely generated subgroup $H$ of $G$ has it.
\end{prop}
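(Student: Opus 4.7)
The plan is to combine Lemma \ref{supinf} with the hypothesis on finitely generated subgroups, exploiting the fact that a witness of subequivalence produced by the $H$-action (with $H \le G$) is automatically a witness of subequivalence for the $G$-action, since the multipliers lie in $H \subset G$.

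First I would fix a $G$-action on a \zd\ compact metric space $X$ and a pair of clopen sets $\mathsf A, \mathsf B$ satisfying $\mu(\mathsf A)<\mu(\mathsf B)$ for every $\mu\in\M_G(X)$. By Remark \ref{disjoint}, I may replace $\mathsf A, \mathsf B$ by $\mathsf A\setminus\mathsf B, \mathsf B\setminus\mathsf A$ and thereby assume that $\mathsf A$ and $\mathsf B$ are disjoint (the inequality on all \im s is preserved). By Remark \ref{from0}, the quantity $\inf_{\mu\in\M_G(X)}(\mu(\mathsf B)-\mu(\mathsf A))$ is then strictly positive.

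Next I would apply Lemma \ref{supinf}: since the right-hand side of the identity there equals a strictly positive number, there must exist a finitely generated subgroup $H\le G$ such that
$$
\inf_{\mu\in\M_H(X)}(\mu(\mathsf B)-\mu(\mathsf A))>0.
$$
In particular, for every $H$-\im\ $\mu$ on $X$ one has the strict inequality $\mu(\mathsf A)<\mu(\mathsf B)$.

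Now I would invoke the hypothesis that $H$ enjoys the comparison property, applied to the restriction of the given $G$-action to $H$, which is an action of $H$ on the \zd\ compact metric space $X$. This directly yields $\mathsf A\preccurlyeq_H\mathsf B$, i.e., a finite clopen partition $\mathsf A=\bigcup_{i=1}^k \mathsf A_i$ together with elements $h_1,\dots,h_k\in H$ such that $h_1(\mathsf A_1),\dots,h_k(\mathsf A_k)$ are disjoint subsets of $\mathsf B$. Since $h_i\in H\subset G$, the very same partition and multipliers witness $\mathsf A\preccurlyeq\mathsf B$ for the ambient $G$-action, finishing the proof.

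I do not anticipate any real obstacle: all the analytic content has been absorbed into Lemma \ref{supinf}, and the two further observations I use are transparent, namely that $\M_H(X)\supset\M_G(X)$ (so $\M_H(X)$ is nonempty and $H$-comparison is applicable) and that subequivalence is preserved under enlarging the acting group.
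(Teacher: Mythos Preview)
Your proof is correct and follows essentially the same route as the paper's: reduce to disjoint clopen sets, use Lemma \ref{supinf} to locate a finitely generated subgroup $H$ for which the infimum over $H$-invariant measures remains positive, apply the comparison property of $H$, and observe that the resulting witnesses of subequivalence lie in $H\subset G$. The only cosmetic difference is that the paper phrases the hypothesis and the conclusion of the $H$-step in terms of the uniform Banach density advantage $\underline D(\mathsf B,\mathsf A)$ (invoking Proposition \ref{prop} (1) twice to pass back and forth), whereas you stay in the measure-theoretic language throughout, which is if anything slightly cleaner since Lemma \ref{supinf} is already stated for measures.
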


\begin{proof}
Let $G$ act on a \zd\ compact metric space $X$ and let $\mathsf A,\mathsf B\subset X$ be two disjoint clopen sets satisfying $\underline D(\mathsf B,\mathsf A)>0$. By the preceding lemma (and by Proposition \ref{prop} (1) used twice), there exists a finitely generated subgroup $H$ of $G$ such that the inequality $\underline D(\mathsf B,\mathsf A)>0$ holds also if $\underline D$ is evaluated for the action of $H$. By the comparison property of $H$, we get that $\mathsf A\preccurlyeq\mathsf B$ in this latter action. But this clearly implies the same subequivalence in the action by $G$.
\end{proof}

\begin{rem}
By the proof of Lemma \ref{supinf}, if $(H_n)$ is an increasing \sq\ of subgroups of $G$ such that  $G=\bigcup_{n=1}^\infty H_n$ then
$$
\inf_{\mu\in\M_G(X)}(\mu(\mathsf B)-\mu(\mathsf A))=\lim_{n\to\infty}\inf_{\mu\in\M_{H_n}(X)}(\mu(\mathsf B)-\mu(\mathsf A)).
$$
Thus, in Proposition \ref{44}, the assumption can be weakened to the existence of
an increasing \sq\ $(H_n)$ of subgroups of $G$ such that $G=\bigcup_{n=1}^\infty H_n$, and every $H_n$ has the comparison property.
\end{rem}

\begin{rem}
The converse implication in Proposition \ref{44} is a bit mysterious. On the one hand, since there are no examples of countable amenable groups without the comparison property, clearly, there is no counterexample for the implication in question. On the other hand, we failed to deduce the comparison property of a subgroup of $G$ from the comparison property of the group $G$.
\end{rem}

\section{Comparison property of subexponential groups}\label{cztery}

This section contains our main result: every subexponential group has the comparison property. The theorem is preceded by a few key definitions and lemmas.

\subsection{Correction chains}
We now introduce the key tool in the proof of the main result. The term $(\phi,E)$-chain reflects a remote analogy to $(f,\eps)$-chains in \tl\ dynamics.  Throughout this subsection, we let $A,B$ denote two disjoint subsets of a countable group $G$.

\begin{defn}Given a partially defined bijection $\phi:A'\to B'$, where $A'\subset A$ and $B'\subset B$, such that all multipliers $\phi(a)a^{-1}$ belong to a finite set $E\subset G$, by a \emph{$(\phi,E)$-chain of length $2n$} (or briefly just \emph{a chain}) we will mean a \sq\ $\mathbf C=(a_1,b_1,a_2,b_2,\dots,a_n,b_n)$ of \,$2n$ \emph{different} elements alternately belonging to $A$ and $B$, such that
$$
\text{for each }i=1,2,\dots,n,\ \  b_i\in Ea_i,
$$
and
$$
\text{for each }i=1,2,\dots,n-1, \ \ b_i\in B',\ \ a_{i+1}\in A' \text{ \ and \ } b_i = \phi(a_{i+1})
$$
(in particular, $b_i\in Ea_{i+1}$).
\end{defn}
The $(\phi,E)$-chains starting at a point $a_1\in A\setminus A'$ and ending at a point $b_n \in B\setminus B'$ are of special importance, as they allow one to ``correct'' the mapping and include $a_1$ in the domain and $b_n$ in the range.

\begin{defn}
A $(\phi,E)$-chain $\mathbf C=(a_1,b_1,a_2,b_2,\dots,a_n,b_n)$ will be called a \emph{$\phi$-correction chain} if $a_1\in A\setminus A'$ and $b_n\in B\setminus B'$.
With each $\phi$-correction chain $\mathbf C$ we associate the \emph{correction of $\phi$ along $\mathbf C$}. The corrected map denoted by $\phi^{\mathbf C}$ is defined on $A'\cup\{a_1\}$ onto $B'\cup\{b_{n}\}$, as follows: for each $i=1,2,\dots,n$ we let
$$
\phi^{\mathbf C}(a_i) = b_i,
$$
and for all other points $a\in A'$ we let $\phi^{\mathbf C}(a)=\phi(a)$.
\end{defn}

The correction may be visualized as follows (solid arrows in the top row represent the map $\phi$ and in the bottom row they represent $\phi^\mathbf C$; the dashed arrows represent the ``$E$-proximity relation'' $b\in Ea$):
\begin{gather*}
a_1\dashrightarrow b_1\longleftarrow a_2\dashrightarrow b_2\longleftarrow a_3\ \dots\ b_{n-1}\longleftarrow a_n\dashrightarrow b_n\\
\Downarrow\\
a_1\longrightarrow b_1\dashleftarrow a_2\longrightarrow b_2\dashleftarrow a_3\ \dots\ b_{n-1}\dashleftarrow a_n\longrightarrow b_n
\end{gather*}
(the dashed arrows become solid, the solid arrows are removed from the map). Notice that $\phi^{\mathbf C}$ still has all its multipliers $\phi^{\mathbf C}(a) a^{- 1}$ in the set $E$.
\smallskip

The problem with the correction chains is that the corresponding corrections of $\phi$ usually cannot be applied simultaneously. The correction chains may collide with each other, i.e., pass through common points and then the corresponding corrections rule each other out. To manage this problem we need to learn more about the possible collisions and then carefully select a family of mutually non-colliding correction chains. The details of this selection are given below.

\begin{defn}
Two $\phi$-correction chains \emph{collide} if they have a common point.
\end{defn}

Since the starting points of $\phi$-correction chains belong to $A\setminus A'$, the ending points belong to $B\setminus B'$, other odd points (counting along the chain) belong to $A'$, other even points belong to $B'$, where the above four sets are disjoint, and each even point is tied to the following odd point by the inverse map $\phi^{-1}$, each collision between two $\phi$-correction chains, say
$\mathbf C=(a_1,b_1,a_2,b_2,\dots,a_n,b_n)$ and $\mathbf C'=(a'_1,b'_1,a'_2,b'_2,\dots,a'_m,b'_m)$, is of one of the following three types:
\begin{itemize}
	\item \emph{common start}: $a_1=a'_1$,
	\item \emph{common end}: $b_n=b'_m$,
	\item all other collisions occur in pairs $(b_i,a_{i+1})=(b'_j,a'_{j+1})$ for some $1\le i<n$ and $1\le j<m$.
\end{itemize}
Of course, two chains may have more than one collision. Note that the definition of a $(\phi,E)$-chain eliminates the possibility of ``self-collisions'' in one chain.

\begin{defn}
Given a $(\phi,E)$-chain $\mathbf C=(a_1,b_1,a_2,b_2,a_3,\dots,a_n,b_n)$, the \sq\
$\mathbf n(\mathbf C)=(p_1,q_1,p_2,q_2,\dots,p_{n-1},q_{n-1},p_n)$, where
$p_i=b_ia_i^{-1}$ $(i=1,2,\dots,n)$ and $q_i=b_ia_{i+1}^{-1}$ $(i=1,2,\dots,n-1)$,
will be called the \emph{name} of $\mathbf C$.
\end{defn}
Notice that the name is always a \sq\ of elements of $E$, of length $2n-1$.

\begin{lem}\label{shorter}
If two different $\phi$-correction chains have the same name (note that their lengths are then equal) and collide with each other then each of them collides also with a strictly shorter $\phi$-correction chain.
\end{lem}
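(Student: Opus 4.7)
The plan is to splice the two chains at a collision point to form a shorter correction chain and, if necessary, reduce it to eliminate self-repeats, keeping the endpoints that witness collisions with both of the original chains.

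First I would extract some structural consequences of having a common name. Writing $\mathbf{C}=(a_1,b_1,\dots,a_n,b_n)$ and $\mathbf{C}'=(a'_1,b'_1,\dots,a'_n,b'_n)$ with common name $(p_1,q_1,\dots,q_{n-1},p_n)$, the relations $b_k=p_ka_k=q_ka_{k+1}$ and their primed counterparts show that each chain is determined by any one of its entries together with the name. Consequently, a matching coincidence $a_i=a'_i$ or $b_i=b'_i$ would force $\mathbf{C}=\mathbf{C}'$, so any collision must be between entries at mismatched indices, $a_i=a'_j$ or $b_i=b'_j$ with $i\neq j$. Combined with the membership pattern $a_1\in A\setminus A'$, $b_n\in B\setminus B'$ and the other $a_k,b_k$ in $A',B'$, this forces $i,j\ge 2$ in the $a$-case and $i,j\le n-1$ in the $b$-case; in particular, no collision touches the boundary entries $a_1,a'_1,b_n,b'_n$.

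By symmetry assume a collision $a_i=a'_j$ with $2\le i<j\le n$; the other three cases are analogous. I would form the spliced sequence
\[
\tilde{\mathbf{C}}=(a_1,b_1,\dots,a_{i-1},b_{i-1},a_i,b'_j,a'_{j+1},b'_{j+1},\dots,a'_n,b'_n),
\]
of length $2(n+i-j)<2n$. Alternation and all the $\phi$-links are inherited from $\mathbf{C}$ and $\mathbf{C}'$; the only new adjacency $a_i\to b'_j$ is automatically $E$-proximal because $b'_j=p_ja'_j=p_ja_i$. Thus $\tilde{\mathbf{C}}$ fulfills every axiom of a $(\phi,E)$-correction chain except possibly the distinctness of its entries, and it starts at $a_1\in A\setminus A'$ and ends at $b'_n\in B\setminus B'$.

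The final step is cycle-cutting. If $\tilde{\mathbf{C}}$ has a repeat, it is necessarily between two entries of the same type ($A$-with-$A$ or $B$-with-$B$) at positions $s<t$; I would delete the entries strictly between them, in which case the $E$-proximity and $\phi$-link at the new joint are both supplied by the surviving copy of the duplicated entry, and the length drops by $2(t-s)$. Iteration terminates in a chain $\bar{\mathbf{C}}$ with distinct entries. The main obstacle to overcome is ensuring that the endpoints $a_1$ and $b'_n$ can never participate in such a cycle, and this is guaranteed by the ``correction'' structure itself: every other $A$-entry of $\tilde{\mathbf{C}}$ lies in $A'$ and every other $B$-entry lies in $B'$, while $A\setminus A'$ and $B\setminus B'$ are disjoint from $A'$ and $B'$. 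Hence $\bar{\mathbf{C}}$ is a genuine $\phi$-correction chain of length at most $2(n+i-j)<2n$ which still starts at $a_1$ and ends at $b'_n$, and therefore collides with $\mathbf{C}$ through $a_1$ and with $\mathbf{C}'$ through $b'_n$.
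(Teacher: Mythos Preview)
Your proof is correct and follows the same splicing strategy as the paper: cut $\mathbf{C}$ before the collision point and continue along $\mathbf{C}'$ after it. The difference lies in how distinctness of the spliced sequence is ensured. The paper observes that collisions always come in pairs $(b_i,a_{i+1})=(b'_j,a'_{j+1})$, selects the collision for which the \emph{smallest} index occurs (among all such $i$'s and $j$'s), and shows that with this choice the spliced chain can have no self-collisions at all, so no repair is needed. You instead splice at an arbitrary collision and then repeatedly excise loops, relying on the fact that the endpoints $a_1\in A\setminus A'$ and $b'_n\in B\setminus B'$ cannot be duplicated and hence survive every excision. Both arguments are short; the paper's minimal-index trick is a bit slicker since it avoids the iterated cycle-cutting, while your approach is more routine and does not require identifying the extremal collision.
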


\begin{proof}
It is obvious that if two $\phi$-correction chains with the same name, say
$$
\mathbf C=(a_1,b_1,a_2,b_2,a_3,\dots,a_n,b_n), \ \ \mathbf C'=(a'_1,b'_1,a'_2,b'_2,a'_3,\dots,a'_n,b'_n),
$$
have the common start $a_1=a_1'$ or the common end $b_n=b_n'$, or a common pair
$(b_i, a_{i+1})=(b'_i,a'_{i+1})$ with the same index $i=1,2,\dots,n-1$, then the chains are equal. The only possible collision between two different $\phi$-correction chains with the same name is that they have a common pair $(b_i,a_{i+1})=(b'_j,a'_{j+1})$ with $i\neq j$. Let $i_0$ be the smallest index appearing in the role of $i$ or $j$ in the collisions
of $\mathbf C$ with $\mathbf C'$ and assume that it plays the role of $i$ (with some corresponding $j$).  Then
$$
(a_1,b_1,a_2,b_2,a_3,\dots,a_{i_0},b_{i_0},a_{i_0+1},b'_{j+1},a'_{j+2},\dots,a'_n,b'_n)
$$
is a $\phi$-correction chain (it has no self-collisions) of length strictly smaller than $2n$, and clearly it collides with both $\mathbf C$ and $\mathbf C'$.
\end{proof}

We enumerate $E$ (arbitrarily) as $\{g_1,g_2,\dots,g_k\}$. We define
$$
\mathbf N=\bigcup_{n=1}^\infty E^{\times2n-1},
$$
which means the disjoint union of the $(2n-1)$-fold Cartesian products of copies of $E$.
This set can be interpreted as the collection of all ``potential'' names of the correction chains of
any partially defined bijection from $A$ to $B$ with the multipliers in $E$. The enumeration of $E$ induces the following linear order on $\mathbf N$:
$$
\mathbf n<\mathbf n'\ \ \iff\ \ |\mathbf n|<|\mathbf n'| \ \vee \ (\,|\mathbf n|=|\mathbf n'| \ \wedge\ \mathbf n<\mathbf n'\,),
$$
where $|\mathbf n|$ denotes the length of $\mathbf n$ and the last inequality is with respect to the lexicographical order on $E^{\times|\mathbf n|}$.

\begin{defn}
A $\phi$-correction chain $\mathbf C$ is \emph{minimal} if it does not collide with any other $\phi$-correction chain whose name precedes $\mathbf n(\mathbf C)$ in the above defined order
on~$\mathbf N$.
\end{defn}

\begin{lem}\label{minnox}
Minimal $\phi$-correction chains do not collide with each other.
\end{lem}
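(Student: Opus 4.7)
The plan is to argue by contradiction, exploiting the dichotomy between two distinct minimal chains having the same name or different names, and to lean on Lemma \ref{shorter} in the former case and directly on the definition of minimality in the latter.

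Suppose, for contradiction, that $\mathbf C$ and $\mathbf C'$ are two distinct minimal $\phi$-correction chains that collide. I would first dispose of the easy case where their names differ: without loss of generality $\mathbf n(\mathbf C)<\mathbf n(\mathbf C')$ in the order on $\mathbf N$, and since $\mathbf C'$ collides with the chain $\mathbf C$ whose name is strictly smaller, this directly contradicts the minimality of $\mathbf C'$.

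The remaining case is when $\mathbf n(\mathbf C)=\mathbf n(\mathbf C')$ (so in particular the two chains have the same length). This is exactly the situation covered by Lemma \ref{shorter}, which applies because $\mathbf C$ and $\mathbf C'$ are distinct and collide. The lemma supplies a strictly shorter $\phi$-correction chain $\mathbf C''$ that collides with each of $\mathbf C$ and $\mathbf C'$. Since length is the primary key in the ordering of $\mathbf N$, the name $\mathbf n(\mathbf C'')$ is strictly shorter than, hence strictly precedes, both $\mathbf n(\mathbf C)$ and $\mathbf n(\mathbf C')$. This contradicts the minimality of (either of) $\mathbf C$ and $\mathbf C'$ and completes the proof.

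There is no real obstacle here beyond setting up the case split cleanly; the entire content of the lemma is already packaged in the linear order on $\mathbf N$ (which prioritises length) and in Lemma \ref{shorter} (which produces a shorter colliding chain in the equal-name case). The only thing to be slightly careful about is the convention that ``collide'' is implicitly about two \emph{distinct} chains, which is the meaningful reading of the statement and is already the setting of Lemma \ref{shorter}.
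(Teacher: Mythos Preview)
Your proof is correct and follows essentially the same approach as the paper: split into the cases of different names (where one chain immediately violates minimality) and equal names (where Lemma~\ref{shorter} yields a strictly shorter colliding chain, again violating minimality). The paper's proof is just a terser two-sentence version of exactly this argument.
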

\begin{proof}
If two $\phi$-correction chains with different names collide, one of them is not minimal. If two $\phi$-correction chains with the same name collide, by Lemma \ref{shorter} none of them is minimal.
\end{proof}

\begin{lem}\label{minch}
Assume that $E$ is a symmetric set containing the unity $e$ and let $a_1\in A\setminus A'$. If there is a $\phi$-correction chain $\mathbf C$ of length $2n$, starting at $a_1$, then there exists a minimal $\phi$-correction chain of length at most $2n$ contained in the finite set $E^{s(n)}a_1$ (where $s(n)$ depends only
on $|E|$ and $n$).
\end{lem}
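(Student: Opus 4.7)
The plan is a descent argument built on top of one simple locality observation. First I establish the \emph{locality bound}: every $(\phi,E)$-chain of length $2m$ containing a point $x$ is contained in $E^{2m-1}x$. Indeed, by definition $b_i\in Ea_i$, and since $E$ is symmetric and $a_{i+1}=(b_ia_{i+1}^{-1})^{-1}b_i$ with $b_ia_{i+1}^{-1}\in E$, we also have $a_{i+1}\in Eb_i$. Consecutive entries of a chain therefore differ by a single $E$-multiplier, so any two entries of a chain of length $2m$ differ by a product of at most $2m-1$ such multipliers. Applied to the given chain $\mathbf C$ and its starting point $a_1$, this places $\mathbf C\subset E^{2n-1}a_1$.

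Next I run a descent in the linear order on $\mathbf N$. Set $\mathbf C_0=\mathbf C$. Having constructed $\mathbf C_i$, either it is already minimal and I stop, or else there exists a $\phi$-correction chain $\mathbf C_{i+1}$ that collides with $\mathbf C_i$ and satisfies $\mathbf n(\mathbf C_{i+1})<\mathbf n(\mathbf C_i)$ in the order on $\mathbf N$; choose any such $\mathbf C_{i+1}$. Since the order on $\mathbf N$ is non-decreasing in length, all names appearing in the sequence lie in the finite set of names of length at most $2n-1$, whose cardinality is $|E|+|E|^3+\dots+|E|^{2n-1}\le n|E|^{2n-1}$. A strictly descending sequence in a finite linearly ordered set terminates, so for some $N\le n|E|^{2n-1}$ the chain $\mathbf C_N$ is minimal, and every $\mathbf C_i$ has length at most $2n$.

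Finally I control the location of $\mathbf C_N$ inside the group. For each $i$ the chains $\mathbf C_i$ and $\mathbf C_{i+1}$ share a common point $x_i$; applying the locality bound to $\mathbf C_{i+1}$ at $x_i$ gives $\mathbf C_{i+1}\subset E^{2n-1}x_i\subset E^{2n-1}\mathbf C_i$. Iterating, starting from $\mathbf C_0\subset E^{2n-1}a_1$, yields
\[
\mathbf C_N\subset E^{(N+1)(2n-1)}a_1.
\]
Setting $s(n):=(n|E|^{2n-1}+1)(2n-1)$, which depends only on $|E|$ and $n$, completes the proof.

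The whole argument is essentially bookkeeping once the two ingredients — the locality bound and the observation that names form a linearly ordered set in which descending sequences of bounded length are finite — are isolated. The only delicate point is making sure each descent step produces a chain whose length does not grow (immediate from how the order on $\mathbf N$ is defined, since length is the primary key), so that the locality estimate at each step uses the uniform exponent $2n-1$ rather than an unbounded one.
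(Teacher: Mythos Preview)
Your proof is correct and follows essentially the same descent argument as the paper: iteratively replace a non-minimal correction chain by one with a strictly smaller name that collides with it, bound the number of steps by the number of names of length at most $2n-1$, and track the accumulated distance from $a_1$ using that each step moves at most $E^{2n-1}$ (the paper uses the looser $E^{2n}$). Your bookkeeping is slightly sharper than the paper's (you count names as $\sum_{i=1}^n|E|^{2i-1}\le n|E|^{2n-1}$ rather than the paper's $\sum_{i=1}^n|E|^{2n}$, which appears to be a typo there), but the structure of the argument is identical.
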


\begin{proof}
If $\mathbf C$ itself is not minimal then it collides with a $\phi$-correction chain $\mathbf C_1$ with $\mathbf n(\mathbf C_1)<\mathbf n(\mathbf C)$ in $\mathbf N$. Clearly, $\mathbf C_1$ is entirely contained in $E^{4n}a_1$. If $\mathbf C_1$ is not minimal, then it collides with some $\mathbf C_2$, whose name precedes that of $\mathbf C_1$ (and hence also that of $\mathbf C$). Now, $\mathbf C_2$ is contained in $E^{6n}a_1$. This recursion may be repeated at most $\sigma_n-1=\sum_{i=1}^n|E|^{2n}-1$ times, because this number estimates the number of names preceding $\mathbf n(\mathbf C)$. So, before $\sigma_n$ steps are performed, a minimal $\phi$-correction chain must occur. Its length is at most $2n$ and it is entirely contained in $E^{2n\sigma_n}a_1$.
\end{proof}

It is the following lemma, where subexponentiality of the group comes into play.

\begin{lem}\label{key}
Let $G$ be a subexponential group. Let $\CT$ be a tiling of $G$ and let $\mathcal S$ denote the set of all shapes of $\CT$. Denote $E=\bigcup_{S\in\mathcal S}SS^{-1}$. Let $A,B$ be disjoint subsets of $G$ satisfying, for some $\eps>0$ and every tile $T$ of $\CT$, the inequality
$$
|B\cap T|-|A\cap T|>\eps|T|.
$$
Let $N\ge 1$ be such that for any $n\ge N$,
$$
\frac1n\log|(E^2)^n|<\log(1+\eps)
$$
(by the subexponentiality assumption, since $E^2$ is finite, such an $N$ exists).
Then, for any partially defined bijection $\phi:A'\to B'$ with $A'\subset A,\ B'\subset B$, such that all multipliers $\phi(a)a^{-1}$ are in $E$, for every point $a_1\in A\setminus A'$, there exists a $\phi$-correction chain of length at most $2N$, starting at $a_1$ (and ending in $B\setminus B'$).
\end{lem}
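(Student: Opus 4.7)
The plan is to argue by contradiction: assume that no $\phi$-correction chain of length at most $2N$ starts at $a_1$ and deduce a contradiction with the subexponential bound $|(E^2)^N|<(1+\eps)^N$. Define reachable sets by $\alpha_0=\{a_1\}$ and, recursively,
\[
\beta_n=(E\alpha_n)\cap B,\qquad \alpha_{n+1}=\alpha_n\cup\phi^{-1}(\beta_n\cap B').
\]
Since each $SS^{-1}$ is symmetric and contains $e$, so is $E$; combined with the fact that every multiplier of $\phi$ lies in $E$, a routine induction gives $\alpha_n\subseteq(E^2)^n a_1$. In particular $|\alpha_N|\le|(E^2)^N|<(1+\eps)^N$.

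The heart of the argument is the inductive claim that, under the standing assumption, $\beta_n\subseteq B'$ for every $n\le N-1$. The base case $n=0$ is immediate, since any $b\in Ea_1\cap B\setminus B'$ would already give a $\phi$-correction chain $(a_1,b)$ of length $2$. For the inductive step, suppose $b\in\beta_n\setminus B'$ and trace backwards: pick $p_k\in\alpha_n$ with $b\in Ep_k$; if $p_k\ne a_1$, then $p_k\in\phi^{-1}(\beta_{n_1})$ for some $n_1\le n-1$, so by the inductive hypothesis $q_{k-1}:=\phi(p_k)\in B'$, and there exists $p_{k-1}\in\alpha_{n_1}$ with $q_{k-1}\in Ep_{k-1}$; iterate. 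The strictly decreasing sequence $n>n_1>n_2>\cdots$ terminates at $a_1$ after at most $n+1$ steps, producing a forward sequence $(a_1,q_0,p_1,q_1,\dots,p_k,b)$ of length $2(k+1)\le 2(n+1)\le 2N$ satisfying all the $(\phi,E)$-chain relations. Any repetitions are removed by the standard shortcutting: if $p_i=p_j$ with $i<j$, excise the loop using $q_j\in Ep_j=Ep_i$ and $\phi(p_{j+1})=q_j$; the case $q_i=q_j$ reduces to this via injectivity of $\phi$. The endpoints survive because $a_1\notin A'$ while every other $p_j\in A'$, and $b\notin B'$ while every other $q_i\in B'$. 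The shortened chain is thus a valid $\phi$-correction chain of length $\le 2N$, contradicting the assumption.

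With the claim in hand, the finishing count is short. For any $n\le N-1$, let $\CT_n=\{T\in\CT:T\cap\alpha_n\ne\emptyset\}$. If $a\in\alpha_n$ lies in the tile $T=Sc$, then writing $a=sc$ with $s\in S$ gives $T=Ss^{-1}a\subseteq SS^{-1}a\subseteq Ea$, so $T\cap B\subseteq\beta_n$ for every $T\in\CT_n$. Summing the tile hypothesis over $\CT_n$ and using $\alpha_n\subseteq\bigcup\CT_n$,
\[
|\beta_n|\ \ge\ \sum_{T\in\CT_n}|B\cap T|\ >\ \sum_{T\in\CT_n}|A\cap T|+\eps\left|\bigcup\CT_n\right|\ \ge\ (1+\eps)|\alpha_n|.
\]
Since $\beta_n\subseteq B'$ by the claim, $\phi^{-1}$ injects $\beta_n$ into $\alpha_{n+1}$, giving $|\alpha_{n+1}|\ge|\beta_n|>(1+\eps)|\alpha_n|$. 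Iterating from $|\alpha_0|=1$ yields $|\alpha_N|>(1+\eps)^N$, contradicting the upper bound noted in the first paragraph. The main technical obstacle is the shortcutting step inside the inductive claim, where one must verify that all $(\phi,E)$-chain relations persist after excision and that the two endpoints cannot be eliminated.
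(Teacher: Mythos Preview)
Your proof is correct and follows essentially the same strategy as the paper: build an increasing sequence of reachable sets starting from $a_1$, use the per-tile inequality to force growth by a factor $(1+\eps)$ at each step, and contradict the subexponential bound $|(E^2)^N|<(1+\eps)^N$. The paper's version uses the $\CT$-saturation $P_n^{\CT}$ in place of your $E\alpha_n$ and presents the construction as a forward iteration that must terminate rather than as a proof by contradiction, but these differences are cosmetic; if anything, your explicit shortcutting step to guarantee distinctness of the chain elements is slightly more careful than the paper's treatment of the same point.
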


\begin{proof}
For each tile $T$ of $\CT$ we have
$$
\frac{|B\cap T|}{|A\cap T|}\ge\frac{\eps|T|}{|A\cap T|}+1\ge 1+\eps
$$
(including the case when the denominator equals $0$).
Clearly, any \emph{$\CT$-saturated} finite set $Q$, i.e, being a union of tiles of $\CT$,
also satisfies
$$
\frac{|B\cap Q|}{|A\cap Q|}\ge1+\eps.
$$
For a set $P\subset G$, we define the \emph{$\CT$-saturation} $P^\CT$ of $P$ as the union of all tiles intersecting $P$:
$$
P^\CT= \bigcup\{T\in\CT:P\cap T\neq\emptyset\}.
$$
Obviously, $P^\CT\subset EP$.

Consider a point $a_1\in A\setminus A'$ (if $A\setminus A'=\emptyset$ then the statement of the theorem holds trivially). Let $T$ be the tile of $\CT$ containing $a_1$, i.e., $T=\{a_1\}^\CT$. Since $T$ contains $a_1$ (and thus $|A\cap T|\ge 1$), we have $|B\cap T|\ge 1+\eps$. There exist $(\phi,E)$-chains of length $2$ from $a_1$ to every $b\in B\cap T$. Now, there are two options:
\begin{itemize}
	\item either at least one of these chains is a $\phi$-correction chain (and then the construction is finished),
	\item or none of these chains is a $\phi$-correction chain, i.e., $B'\cap T=B\cap T$.
\end{itemize}
In the latter option we have $|B'\cap T|=|B\cap T|\ge1+\eps$, i.e., denoting
$$
P_1=\{a_1\} \text{ \ and \ } Q_1=T=P_1^{\CT},
$$
we have
$$
|B'\cap Q_1|\ge1+\eps.
$$

From now on we continue by induction. Suppose that for some $n\ge 1$ we have defined a $\CT$-saturated set $Q_n$ such that
\begin{enumerate}
	\item for every $b\in B\cap Q_n$ there exists a $(\phi,E)$-chain of length at most $2n$ from $a_1$ to $b$,
	\item $B\cap Q_n= B'\cap Q_n$ (i.e., there are no $\phi$-correction chains starting at $a_1$ and ending in
	$Q_n$), and
	\item $|B'\cap Q_n|\ge(1+\eps)^n$.
\end{enumerate}
Then we define $P_{n+1}=\phi^{-1}(Q_n)=\phi^{-1}(B'\cap Q_n)$. Bijectivity of $\phi$ implies that $|P_{n+1}|\ge(1+\eps)^n$. Let $Q_{n+1}$ denote the $\CT$-saturation $P_{n+1}^{\CT}$. Every point $b\in B\cap Q_{n+1}$ is of the form $g\phi^{-1}(b')$ with $g\in E$ and $b'\in B'\cap Q_n$, and, by (1), $b'$ can be reached from $a_1$ by a $(\phi,E)$-chain of length at most $2n$. Thus there exists a $(\phi,E)$-chain of length at most $2(n+1)$ from $a_1$ to every $b\in B\cap Q_{n+1}$. There are two options:
\begin{itemize}
	\item either at least one of these chains is a $\phi$-correction chain (then the construction is finished),
	\item or $B\cap Q_{n+1}=B'\cap Q_{n+1}$.
\end{itemize}
Suppose the latter option occurs. Since $Q_{n+1}$ is $\CT$-saturated, we have
$$
|B'\cap Q_{n+1}|=|B\cap Q_{n+1}|\ge(1+\eps)|A\cap Q_{n+1}|\ge(1+\eps)|P_{n+1}|\ge(1+\eps)^{n+1}.
$$
Now, (1)--(3) are fulfilled for $n+1$, so the induction can be continued.

Notice that for each $n$, $Q_n\subset EP_n$ and, by symmetry of the set $E$, $P_{n+1}\subset EQ_n$. As a consequence, we have $Q_{n+1}\subset E^{2n+1} a_1\subset (E^2)^{n+1} a_1$, and if the latter of the above options occurs, we have
$$
|(E^2)^{n+1}|\ge|Q_{n+1}|\ge|B'\cap Q_{n+1}|\ge(1+\eps)^{n+1},
$$
which implies that $n+1<N$ by the assumption. So, $n=N-2$ is the last integer for which nonexistence of $\phi$-correction chains of length $2(n+1)$ is possible. In the worst case scenario a correcting chain of length $2N$ must already exist.
\end{proof}

\begin{rem}It is absolutely crucial in the proof that we are using a tiling, not a quasitiling leaving some part of $G$ uncovered by the tiles. In such case, $a_1$ may be uncovered by the tiles, moreover, we would have no control as to how many elements of $P_{n+1}=\phi^{-1}(Q_n)$ are ``lost'' in the untiled part of $G$.
\end{rem}

\subsection{The main result}
\begin{thm}\label{main}
Every subexponential group $G$ has the comparison property.
\end{thm}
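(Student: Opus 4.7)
The plan is to reduce to the finitely generated case via Proposition \ref{44}, then to the Banach density comparison property via Theorem \ref{ujowe}, and finally to build the required block-code-determined injection using the correction chain machinery of Section 4.1 together with Lemma \ref{key}.

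Assuming $G$ is finitely generated with subexponential growth, let $A, B \subset G$ be disjoint with $\underline D(B, A) > \eps > 0$. By Lemma \ref{bd} there is a finite $F \subset G$ with $\underline D_F(B, A) > \eps$, and by Lemma \ref{bdc}, any tiling whose shapes are sufficiently $(F, \delta)$-invariant automatically has $|B \cap T| - |A \cap T| > \eps'|T|$ on each tile $T$, for some fixed $\eps' > 0$. Theorem \ref{ourtilings} provides such a tiling $\CT$; set $E = \bigcup_{S \in \CS(\CT)} SS^{-1}$ (a finite symmetric set containing $e$) and let $N$ be the integer from Lemma \ref{key} applied to $\eps'$ and $E^2$---finiteness of $N$ is exactly where the subexponential growth hypothesis enters.

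Next I would build $\tilde\varphi : A \to B$ as the limit of an iteration $\phi_0 = \emptyset \subset \phi_1 \subset \phi_2 \subset \cdots$ of partial bijections with multipliers in $E$. At each step $k$, Lemma \ref{key} guarantees that from every $a \in A \setminus \mathrm{dom}(\phi_k)$ there issues a $\phi_k$-correction chain of length at most $2N$; Lemma \ref{minch} then produces a minimal such chain contained in $E^{s(N)} a$, and Lemma \ref{minnox} ensures that if we canonically select, for each uncovered point, the first minimal chain in the lexicographic order on $\mathbf N$, the resulting family is pairwise non-colliding. Applying all selected corrections simultaneously yields $\phi_{k+1} \supsetneq \phi_k$, and we set $\tilde\varphi = \bigcup_k \phi_k$.

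The main obstacle will be verifying that $\tilde\varphi$ is determined by a \emph{single} finite-horizon block code in the sense of Definition \ref{local rule}. Each correction step is manifestly local---the minimal chain affecting the value at $a$ lies inside $E^{s(N)} a$---and the canonical selection is shift-equivariant, so that every $\phi_k$ is, by the Curtis--Hedlund--Lyndon Theorem \ref{CHL}, determined by a block code, albeit with horizon a priori growing with $k$. The delicate point is to show that for any fixed $a \in A$ only boundedly many iterations can modify $\tilde\varphi(a)$; by symmetry of $E$, the iterations touching $a$ are exactly those whose starting point $a_1$ satisfies $a \in E^{s(N)} a_1$, i.e.\ $a_1 \in E^{s(N)} a$, a set of bounded cardinality, so the total horizon needed collapses to a fixed finite set independent of $a$. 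Once $\tilde\varphi$ is recognized as a block-code-determined injection from $A = A_{y^{AB}}$ into $B = B_{y^{AB}}$, Theorem \ref{tutka}~(2) gives $[\mathsf 1] \preccurlyeq [\mathsf 2]$ in the transitive subshift $Y^{AB}$, which is the Banach density comparison property of $G$, and hence, by Theorem \ref{ujowe}, the comparison property.
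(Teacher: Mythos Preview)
Your overall strategy matches the paper's: reduce to finitely generated via Proposition~\ref{44}, to Banach density comparison via Theorem~\ref{ujowe}, choose a tiling so that Lemma~\ref{key} applies, and iterate corrections along minimal chains bounded in length by~$2N$. Two points, however, need repair.

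First, the chain of inclusions $\phi_0\subset\phi_1\subset\phi_2\subset\cdots$ and the formula $\tilde\varphi=\bigcup_k\phi_k$ are incorrect. A correction along $(a_1,b_1,\dots,a_n,b_n)$ does not merely extend $\phi$: it \emph{redefines} $\phi$ at $a_2,\dots,a_n$ (sending $a_i$ to $b_i$ rather than to $b_{i-1}=\phi(a_i)$). Only the domains are nested, not the graphs. The desired map is not a union but simply $\phi_m$ for some finite~$m$; this is exactly what the paper produces.

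Second, and relatedly, your horizon argument does not close. Knowing that only boundedly many iterations ``touch'' $a$ does not by itself bound the coding horizon: to decide \emph{which} iterations touch $a$ you must know $\phi_k$ on a neighborhood of $a$, and computing $\phi_k$ there already requires a horizon that grows with~$k$. The paper instead shows that the iteration \emph{terminates globally} after a uniformly bounded number of steps: if $a$ is still uncovered at step~$k$, then the minimal chain in $E^{s(N)}a$ supplied by Lemmas~\ref{key} and~\ref{minch} has an uncovered starting point in $E^{s(N)}a$, which becomes covered at step $k+1$; hence $a$ is covered after at most $|E^{s(N)}|$ steps. Then $\tilde\varphi=\phi_m$ with $m\le|E^{s(N)}|$, and since each step is a block code with fixed horizon (the paper computes it explicitly), the composite has bounded horizon. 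Finally, your ``canonical selection'' of one minimal chain per uncovered point is superfluous: Lemma~\ref{minnox} already guarantees that \emph{all} minimal $\phi_k$-correction chains of length $\le 2N$ are pairwise non-colliding, so the paper simply applies corrections along all of them at once.
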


\begin{proof}
By Proposition \ref{44}, it suffices to prove the theorem for finitely generated groups $G$ with subexponential growth, and Theorem \ref{ujowe} allows us to focus on the Banach density comparison property. So, let $G$ be a finitely generated group with subexponential growth. Let $A,B\subset G$ be disjoint and satisfy $\underline D(B,A)>0$. All we need is, in the subshift $Y^{AB}$, to construct an injection $\tilde\varphi:A\to B$ determined by a block code.

By Lemma \ref{bd}, there exists a finite set $F\subset G$ such that $\underline D_F(B,A)>5\eps$ for some positive $\eps$. By Theorem~\ref{ourtilings}, there exists an $(F,\eps)$-\inv\ tiling $\CT$ of $G$. We let $\mathcal S$ denote the set of all shapes of $\CT$. By Lemma \ref{bdc}, for every shape $S$ of $\CT$ we have $\underline D_S(B,A)>\eps$, in particular,
$$
|B\cap T|-|A\cap T|>\eps|T|,
$$
for every tile $T$ of $\CT$. Let $E=\bigcup_{S\in\mathcal S}SS^{-1}$ and say $E= \{g_1, g_2, \dots, g_k\}$.

We will  build the desired injection $\tilde\varphi:A\to B$ in a series of steps. The first approximation of $\tilde\varphi$ is the map $\phi_1$ defined on a subset of $A$ by
a procedure similar to that used in the proof of Lemma \ref{compisweakcomp}: we let $A_1=A\cap g_1^{-1}(B)$, and $B_1=g_1(A_1)\subset B$ and then, for each $j=2,3,\dots,k$ we define inductively
$$
A_j = A\setminus\Bigl(\bigcup_{i=1}^{j-1}A_i\Bigr)\cap g_j^{-1}\left(B\setminus\Bigl(\bigcup_{i=1}^{j-1} B_i\Bigr)\right)\ \text{and}\ B_j= g_j A_j\subset B.
$$
On each set $A_j$ (with $j=1,2,\dots,k$), $\phi_1$ is defined as the multiplication on the left by $g_j$. We let $A'_1=\bigcup_{i=1}^k A_i\subset A$ and $B'_1=\bigcup_{i=1}^k B_i\subset B$ denote the domain and range of $\phi_1$, respectively. The rule behind the construction of $\phi_1$ is as follows: for each $a\in A$ we first check whether $g_1a\in B$ and for those $a$ for which this is true, we assign $\phi_1(a)=g_1 a$. For other points $a$ we check whether $g_2a\in B$ and, unless $g_2a$ has already been assigned as $\phi_1(a')$ (for some $a'\in A$) in the previous step, we assign $\phi_1(a)=g_2 a$. And so on: in step $i$ we assign $\phi_1(a)=g_i a$ if $g_ia\in B$, unless $g_ia$ has already been assigned as $\phi(a')$ (for some $a'\in A$) in steps $1,2,\dots,i-1$. We stop when $i=k$. From this description it is easy to see that $\phi_1$ is an injection from $A_1'$ into $B_1'\subset B$. In fact, it is also seen that if $a_1,a_2\in A$ and
$$
a_1(y^{AB})|_{E^k} = a_2(y^{AB})|_{E^k},
$$
then either $\phi_1(a_1)a_1^{- 1}=\phi_1(a_2) a_2^{- 1}$ or both values of $\phi_1(a_1)$ and $\phi_1(a_2)$ are undefined. Using the criterion \eqref{clr} (for a one-element family $\A$), we conclude that $\phi_1$ restricted to its domain $A_1'$ is determined by a block code (with the coding horizon $E^k$). We remark, that the block code determines some extension of $\phi_1$ to the whole group, but we do not care about the values of the code outside $A_1'$ and we still treat $\phi_1$ as undefined outside $A_1'$. If $A'_1=A$ (which is rather unlikely in infinite groups), then the proof is finished.

Otherwise we continue the construction involving the correction chains and the associated corrections. By Lemma \ref{key}, for an appropriate $N$, every element $a_1\in A\setminus A_1'$ is the start of a $\phi_1$-correction chain of length at most $2N$. Next, by Lemma \ref{minch}, within $E^{s(N)}a_1$ there is a minimal $\phi_1$-correction chain of length at most $2N$. Finally, by Lemma \ref{minnox}, all minimal $\phi_1$-correction chains of lengths at most $2N$ do not collide with each other. Thus we can perform simultaneous corrections along all $\phi_1$-correction chains of lengths at most $2N$. The corrected map will be denoted by $\phi_2$. For each $a\in A\setminus A'_1$ perhaps we have not yet included $a$ in the domain $A'_2$ of $\phi_2$, but we have included in $A'_2$ at least one new point from $E^{s(N)}a\cap (A\setminus A_1')$. Clearly, $\phi_2$ sends $A'_2$ into $B$ and the multipliers of $\phi_2$ are contained in $E$.

We will now argue why $\phi_2$ is determined by a block code. Notice that given $a\in A$, finding all $\phi_1$-correction chains of lengths bounded by $2N$ starting at or passing through $a$ requires examining the values of $\phi_1$ at most in the set $E^{2N}a$. Then, given such a chain, we can decide whether it is minimal or not by examining all $\phi_1$-correction chains of lengths bounded by $2N$ which collide with it. For this, viewing the values of $\phi_1$ on the set $E^{4N}a$ suffices. Now suppose that $a_1,a_2\in A$ and
$$
a_1(y^{AB})|_{E^{k+4N}}=a_2(y^{AB})|_{E^{k+4N}}.
$$
Since $E^k$ is the coding horizon for $\phi_1$, we have
$$
a_1(\bar\phi_1)|_{E^{4N}}=a_2(\bar\phi_1)|_{E^{4N}},
$$
where $\bar\phi_1$ is defined as the symbolic element over the alphabet $E\cup\{\emptyset\}$ by the rule
$$
(\bar\phi_1)_g=\begin{cases}\phi_1(g)g^{-1}& \text{if }g\in A'_1,\\ \emptyset &\text{otherwise,}\end{cases}
$$
($g\in G$). This implies that $(r_1 a_1, s_1 a_1, r_2 a_1, s_2 a_1, \dots, r_n a_1, s_n a_1)$ is a (minimal) $\phi_1$-correction chain if and only if $(r_1 a_2, s_1 a_2, r_2 a_2, s_2 a_2, \dots, r_n a_2, s_n a_2)$ is a (minimal) $\phi_1$-correction chain, whenever $n\le N$ and all $r_i$ and $s_i$ belong to $E^{2N}$. Hence either both $a_1$ and $a_2$ lie on minimal $\phi_1$-correction chains of length at most $2N$, or both do not. In the latter case, since $a_1(y^{AB})|_{E^{k}}=a_2(y^{AB})|_{E^{k}}$, either $\phi_2(a_1)a_1^{-1}=\phi_1(a_1)a_1^{-1}=\phi_1(a_2)a_2^{-1}=\phi_2(a_2)a_2^{-1}$ or both $\phi_2(a_1)$ and $\phi_2(a_2)$ are undefined. In the former case, the lengths and names of the two minimal $\phi_1$-correction chains are the same, moreover $a_1$ and $a_2$ occupy equal positions in the corresponding chains. This implies that the multipliers $\phi_2(a_1)a_1^{-1}$ and $\phi_2(a_2)a_2^{-1}$ (although different than those for $\phi_1$) will both be defined and equal. So, $\phi_2$ is indeed determined by a block code.

The above process can be now repeated: the next map $\phi_3$ is obtained by performing simultaneous corrections along all minimal $\phi_2$-correction chains of lengths not exceeding $2N$. Again, for every $a\in A\setminus A'_2$, at least one point from each
set $E^{s(N)}a$ is included in the domain $A'_3$ of $\phi_3$ (the intersection $(A\setminus A'_2)\cap E^{s(N)}a$ is nonempty as it contains $a$, and often $a$ will be the new point included in $A'_3$). By the same arguments as before, the map $\phi_3$ is an injection from $A'_3$ into $B$ determined by a block code (with the coding horizon $E^{k+ 4N}$), and the multipliers of $\phi_3$ remain in $E$.

We claim that after a finite number $m$ of analogous steps all points of $A$ will be included in the domain of $\phi_m$, i.e., $\phi_m$ will be the desired injection $\tilde\varphi$ from $A$ into $B$. Indeed, a point $a\in A\setminus A_1'$ remains outside the domains of all the maps $\phi_i$ with $i\le m$ only if the number of all other points (except $a$) in $(A\setminus A_1')\cap E^{s(N)}a$ is at least $m- 1$ (because in each step at least one new point from this set is included in the domain). This is clearly impossible for $m> |E^{s(N)}|$, hence the desired finite number $m$ exists. By induction, all the maps $\phi_i$ ($i=1,2,\dots,m$) are determined by block codes (the coding horizon for the code which determines $\tilde\varphi=\phi_m$ is at most the set $E^{k+4Nm}$). This ends the proof.
\end{proof}

\subsection{Two questions}
As we have already mentioned, the problem whether all countable amenable groups have the comparison property is rather difficult. On the other hand, based on the experience with subexponential groups, one might hope that other additional assumptions might help as well. We formulate two relaxed, yet still open, versions of Question \ref{3.7}.

\begin{ques}
\begin{enumerate}
	\item Do all countable amenable residually finite groups have the comparison property?
	\item Do all countable amenable left (right) orderable groups have the comparison property?
\end{enumerate}
\end{ques}

\section{Free actions and tilings}\label{piec}

In this section we provide an application of comparison to the existence of so-called \emph{dynamical tilings} with good F\o lner properties in free actions on \zd\ compact metric spaces. At the beginning of the paper, we have explained that the existence of such tilings is very important in the study of some areas, for example, in building the theory of symbolic extension for actions of countable amenable groups. Such tilings are guaranteed to exist in $\z$-actions, which follows from various versions of marker theorems (see e.g. \cite{Bo}). But for actions of general countable amenable groups, just like comparison, the existence of dynamical tilings remains an open problem.

\begin{defn}\label{dqt}
Let a countable amenable group $G$ act on a \zd\ compact metric space $X$ and let $\CS$ be a finite family of finite subsets of $G$ (containing the unity $e$). We say that the action \emph{admits a dynamical quastiling with shapes in $\CS$} if there exists a map $x\mapsto \CT_x$, which assigns to every $x\in X$ a quasitiling $\CT_x$ of $G$ with shapes in $\CS$ (see Definition \ref{quasi}), and $x\mapsto \CT_x$ is a factor map from $X$ onto a symbolic dynamical system over the alphabet $\Delta=\CS\cup\{\mathsf 0\}$, where $\CT_x$ is viewed as a point in $\Delta^G$ (see the comments below Definition \ref{quasi}). We say that a dynamical quasitiling is $(K,\eps)$-invariant, $\eps$-disjoint, disjoint, $\alpha$-covering, or that it is a \emph{dynamical tiling} if $\CT_x$ has the respective property for every $x$. We will say that the action has \emph{the tiling property} if, for every finite set $K\subset G$ and every $\eps>0$, it admits a $(K,\eps)$-invariant dynamical tiling.
\end{defn}

The fact that the dynamical quasitiling $x\mapsto\CT_x$ is a \tl\ factor of the action of $G$ on $X$ is equivalent to the conjuction of the following two statements:
\begin{enumerate}
	\item for any finite set $F$ of $G$, if $x$ and $x'$ are sufficiently close to each other in $X$, then the set $F$ is tiled by $\CT_x$ and by $\CT_{x'}$ in the same way,
	\item for each $g\in G$ we have $\CT_{g(x)}=\{Tg^{-1}:T\in\CT_x\}$.
\end{enumerate}

In \cite{DH} the following result is proved:
\begin{thm}{\cite[Corollary 3.5]{DH}}\label{quasitilings}
Let a countable amenable group $G$ act freely on a \zd\ compact metric space $X$. For any finite set $K\subset G$ and any $\eps>0,\ \delta>0$ the action admits a $(K,\eps)$-invariant, disjoint, $(1\!-\!\delta)$-covering dynamical quasitiling $x\mapsto\CT_x$.
\end{thm}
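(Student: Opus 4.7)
The plan is to emulate the Ornstein--Weiss greedy construction of algebraic quasitilings within the dynamical framework, producing the center sets through clopen ``marker'' sets in $X$ rather than through greedy selection in $G$. First I would fix nested shapes $\{e\}\subset S_1\subset S_2\subset\cdots\subset S_m$, each $(K,\eps)$-invariant with $S_{i+1}$ much more F\o lner than $S_i$, and $m$ large enough that the combinatorial Ornstein--Weiss lemma guarantees $(1-\delta)$-covering whenever one iteratively places disjoint $S_m, S_{m-1},\ldots, S_1$-tiles in a maximal fashion.

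The main technical tool is a clopen marker lemma: for any finite $F\subset G$, freeness combined with compactness produces, around each $x\in X$, a clopen neighborhood $\mathsf U_x$ with $g(\mathsf U_x)\cap \mathsf U_x=\emptyset$ for every $g\in F\setminus\{e\}$; extracting a finite subcover (using \zd ality) one can then select a clopen $\mathsf M\subset X$ maximal among those clopen subsets whose $F$-translates are pairwise disjoint. By maximality, every orbit visits $\mathsf M$ with positive syndetic density depending only on $|F|$.

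Equipped with this lemma, the construction runs from the largest shape downward. For $i=m,m-1,\ldots,1$ let $\mathsf U^{(i)}\subset X$ be the clopen set of points $x$ for which $e$ is not yet covered by tiles placed at earlier stages and for which attaching a tile of shape $S_i$ at $e$ remains disjoint from them (this depends only on a finite portion of the already-built quasitiling, hence is clopen). Apply the marker lemma inside $\mathsf U^{(i)}$ relative to $F=S_iS_i^{-1}$ to select a clopen $\mathsf M^{(i)}\subset \mathsf U^{(i)}$ with pairwise disjoint $F$-translates, maximal with respect to this property. Define $C_x(S_i)=\{g\in G:g(x)\in \mathsf M^{(i)}\}$ and add the corresponding $S_i$-tiles. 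The symbolic encoding $x\mapsto\CT_x$ is then determined by a block code in the sense of Theorem \ref{CHL}, hence is a topological factor map; the tiles are pairwise disjoint by construction; and each shape is $(K,\eps)$-invariant by choice.

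The hard part will be the $(1-\delta)$-covering estimate: one must verify that the classical Ornstein--Weiss greedy bound (where each round covers a definite proportion of what remains) survives the restriction that ``maximal'' be interpreted as ``maximal among clopen sets'' rather than ``maximal in the measurable sense''. The argument is pointwise: for each $x$ and each $i$, every element $g\in G$ that is still uncovered after stage $i+1$ either satisfies $g(x)\in\mathsf M^{(i)}$ or is blocked by a nearby point already in $\mathsf M^{(i)}$, which yields the required density gain. Making this uniform in $x$ forces one to iteratively refine the clopen partitions used at each stage before constructing $\mathsf M^{(i)}$; this bookkeeping, rather than any single conceptually hard step, is where the care of the proof really lies.
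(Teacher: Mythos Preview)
The paper does not prove this theorem at all: it is quoted verbatim as \cite[Corollary~3.5]{DH} and used as a black box in the proof of Theorem~\ref{dt}. So there is no ``paper's own proof'' to compare against.

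That said, your outline is the standard one and matches the strategy of \cite{DH}: a dynamical version of the Ornstein--Weiss layered greedy construction, with clopen marker sets replacing the purely combinatorial maximal selections. Two points deserve sharpening. First, your ``maximal clopen $\mathsf M$'' need not exist via a one-shot compactness argument; in practice one builds it by an explicit finite refinement (take a clopen partition fine enough that each atom is $F$-separated from itself, then greedily include atoms), and this is exactly the bookkeeping you allude to at the end. Second, the $(1-\delta)$-covering bound does not follow from pointwise maximality of each $\mathsf M^{(i)}$ alone: the Ornstein--Weiss argument needs that at stage $i$ the uncovered region, viewed inside a single large $S_{i-1}$-window, is either small or contains an $S_i$-tile disjoint from the previously placed tiles. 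Your condition ``$e$ is uncovered and an $S_i$-tile at $e$ is disjoint from earlier tiles'' is the right clopen set to marker inside, but to get the density gain you must also ensure that the marker set is maximal \emph{relative to the uncovered part}, not just $F$-separated. This is handled in \cite{DH} by working with $\eps$-disjoint tiles first and then disjointifying, rather than insisting on disjointness throughout; your all-disjoint version would need an extra argument to show no density is lost.
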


We will now demonstrate strong connection between comparison and the tiling property of actions.

\begin{thm}\label{dt}
Let a countable amenable group $G$ act freely on a \zd\ compact metric space $X$. Then the action admits comparison if and only if it has the tiling property. The backward implication holds without assuming that the action is free.
\end{thm}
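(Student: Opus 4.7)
The proof splits into the two implications, handled independently.

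For the backward direction (the tiling property implies comparison, no freeness required), I begin with disjoint clopen sets $\mathsf A,\mathsf B$ satisfying $\mu(\mathsf A)<\mu(\mathsf B)$ for every \im\ $\mu$ (reduction to disjoint sets is justified by Remark~\ref{disjoint}). Remark~\ref{from0} combined with Proposition~\ref{prop}(1) yields a uniform gap $\underline D(\mathsf B,\mathsf A)>\eps>0$, and Lemma~\ref{bbb} supplies a finite $F\subset G$ with $\inf_{x\in X}\underline D_F(B_x,A_x)>\eps/2$. I then invoke the tiling property with target $(F,\eps_0)$ for $\eps_0$ so small that Lemma~\ref{bdc} forces $\underline D_S(B_x,A_x)>0$ for every shape $S$ of the resulting dynamical tiling $\CT_x$ and every $x\in X$. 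Consequently, within each tile $T$ of each $\CT_x$ there are strictly more $\mathsf B$-visits than $\mathsf A$-visits; matching them via a once-and-for-all fixed enumeration of each of the finitely many shapes produces an injection $\tilde\varphi_x:A_x\to B_x$ that depends only on the tile containing the given position together with the local block of $x$ on that tile, so it is determined by a block code. Combining $\pi_{\mathsf A\mathsf B}$ with the tiling factor into a single subshift extension $\pi:X\to Z$, Theorem~\ref{tutka}(1) applied inside $Z$ yields $\pi(\mathsf A)\preccurlyeq\pi(\mathsf B)$; pulling this back through $\pi$ delivers $\mathsf A\preccurlyeq\mathsf B$ in $X$.

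For the forward direction (freeness plus comparison implies the tiling property), fix a target pair $(K,\eps)$. By Theorem~\ref{quasitilings} I obtain a $(K',\eps')$-invariant, disjoint, $(1\!-\!\delta)$-covering dynamical quasitiling $x\mapsto\CT_x$ with $K'\supset K$ and $\eps',\delta$ small (to be calibrated). Let $\CS=\{S_1,\dots,S_m\}$ be the shape set, $\mathsf C_i$ the clopen set of centers of shape $S_i$, $\mathsf W=\bigcup_i\mathsf C_i$ (a disjoint union), and $\mathsf U$ the clopen uncovered set. Disjointness of the quasitiling gives the identity $\sum_i|S_i|\mu(\mathsf C_i)+\mu(\mathsf U)=1$ for every \im\ $\mu$, which together with $\mu(\mathsf U)\le\delta$ (via Proposition~\ref{prop}(1) and the covering hypothesis) guarantees $\mu(\mathsf W)\ge(1-\delta)/\max_i|S_i|$. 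Choosing $\delta$ so small that $\delta<(1-\delta)/\max_i|S_i|$ enforces $\mu(\mathsf U)<\mu(\mathsf W)$ uniformly in $\mu$, and comparison then supplies a clopen partition $\mathsf U=\bigcup_{j=1}^k\mathsf U_j$ together with elements $h_1,\dots,h_k\in G$ such that the sets $h_j(\mathsf U_j)$ are pairwise disjoint subsets of $\mathsf W$.

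I build the desired tiling by enlarging original shapes: at a center $c$ of original shape $S_i$, adjoin $h_j^{-1}$ to the shape precisely when $h_j^{-1}c\in U_x$ and $h_j^{-1}c(x)\in\mathsf U_j$. Each uncovered orbit point $g\in U_x$ satisfies $g(x)\in\mathsf U_j$ for a unique $j$ and is sent by $h_j$ to an original center $c(g):=h_jg\in W_x$; freeness of the action together with the disjointness of the $h_j(\mathsf U_j)$ forces $g\mapsto c(g)$ to be injective, so each center absorbs at most one uncovered point. Consequently the new shapes all belong to the finite family $\{S_i,\,S_i\cup\{h_j^{-1}\}:1\le i\le m,\,1\le j\le k\}$; the new tiles partition $G$ (coverage because every uncovered point is absorbed, disjointness from the original quasitiling plus the injectivity above); and they are $(K,\eps)$-invariant provided $K'$ was chosen large and $\eps'$ small enough to swamp the bounded correction $\{h_j^{-1}\}$. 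The new tiling is a factor of the action because the shape attached to each center $c$ is determined by $x$ restricted to a finite neighborhood of $c$, combining the block code for the original quasitiling with the clopen partition $\{\mathsf U_j\}$.

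The main obstacle is the parameter chase in the forward direction: one must choose $K',\eps',\delta$ and the coding horizons in the correct order so that the enlarged shapes form a finite family, remain $(K,\eps)$-invariant after the bounded-size modifications coming from the $h_j^{-1}$, and simultaneously fit together into a bona fide block-code factor map. Tracing carefully that the new shape at each original center is determined by a finite-window read of $x$ around that center, rather than by global data, is the most delicate bookkeeping in the argument.
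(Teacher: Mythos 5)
Your backward implication is essentially the paper's argument (uniform density advantage, pass to a tiling with shapes invariant enough that Lemma \ref{bdc} gives a strict surplus of $\mathsf B$-visits over $\mathsf A$-visits inside every tile, match them by a per-block injection, conclude via Theorem \ref{tutka}(1)); that part is fine.

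The forward implication has a genuine gap. You apply comparison to the pair $(\mathsf U,\mathsf W)$ where $\mathsf W$ is the set of \emph{centers}, and you need $\sup_\mu\mu(\mathsf U)<\inf_\mu\mu(\mathsf W)$. Your lower bound on $\mu(\mathsf W)$ is $(1-\delta)/\max_i|S_i|$, and you propose ``choosing $\delta$ so small that $\delta<(1-\delta)/\max_i|S_i|$.'' But the quantifiers in Theorem \ref{quasitilings} run the other way: $\delta$ must be fixed \emph{before} the quasitiling (hence before its shape set $\CS$) is produced, and there is no a priori bound on $\max_i|S_i|$; in fact the Ornstein--Weiss-type constructions behind that theorem force the shapes to grow without bound as $\delta\to 0$, so the inequality you need cannot be arranged. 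The same obstruction appears combinatorially: since each center can absorb at most one uncovered point, you need an injection from a set of upper Banach density up to $\delta$ into a set of density at most $1/\min_i|S_i|$, and the latter is typically far smaller than any admissible $\delta$ --- so the required injection simply does not exist, independently of whether comparison can be invoked. The paper's fix is precisely to abandon the one-point-per-center budget: inside each shape $S$ it designates a subset $B_S$ of cardinality $i_S$ \emph{proportional} to $|S|$, chosen in the window $\bigl(\tfrac{2\delta}{1-\delta}|S|,\tfrac{\eps}{2|K|}|S|\bigr)$, and compares $\mathsf U$ with $\mathsf B=\{x:e\in\bigcup_{(S,c)}B_Sc\}$. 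Then $\underline D(\mathsf B)\ge(1-\delta)\cdot\tfrac{2\delta}{1-\delta}=2\delta>\delta\ge\overline D(\mathsf U)$ \emph{independently of the shape sizes} (via Lemma \ref{1.5} and Corollary \ref{coro}), so comparison applies with a non-circular choice of parameters, while the upper bound $i_S<\tfrac{\eps}{2|K|}|S|$ keeps each enlarged tile $(K,\eps)$-invariant. Your block-code bookkeeping at the end would go through once this density issue is repaired, but as written the construction does not get off the ground.
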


\begin{proof}
We need to consider only infinite groups $G$. Firstly we will show that for any finite $K\subset G$ and $1>\eps>0$, the free action admits a $(K, \eps)$-invariant dynamical tiling. By Theorem \ref{quasitilings}, the free action admits a $(K,\frac\eps2)$-invariant, disjoint, $(1\!-\!\delta)$-covering dynamical quasitiling $x\mapsto \CT'_x$, where $\delta> 0$ is so small that $\frac{2\delta}{1-\delta}<\frac\eps{2|K|}$. We denote by $\CS'$ the collection of all shapes used by this quasitiling. We can assume that each shape $S\in\CS'$ has cardinality so large that the interval $(\frac{2\delta}{1-\delta}|S|,\frac\eps{2|K|}|S|)$ contains an integer $i_S$ (if this fails, we can choose a $(K',\frac\eps2)$-invariant, disjoint, $(1\!-\!\delta)$-covering dynamical quasitiling, where $K'\supset K$, and clearly this quasitiling is also $(K,\frac\eps2)$-invariant, while its shapes have cardinalities at least $\frac{|K'|}2$, as large as we wish; here we use infiniteness of $G$). In each shape $S\in\mathcal S'$ we select (arbitrarily) a subset $B_S$ of cardinality $i_S$. Given $x\in X$, we now observe two subsets of $G$:
$$
A_x= G\setminus \bigcup\CT'_x\text{ \ \ and  \ \ }B_x = \bigcup_{(S,c)\in\CT'_x}B_Sc.
$$
Clearly, $\overline D(A_x)=1-\underline D(\bigcup\CT'_x)\le\delta$. Using Lemma~\ref{1.5} we easily get $\underline D(B_x)>(1-\delta)\cdot \frac{2\delta}{1-\delta} =2\delta$. By Corollary \ref{coro}, $\underline D(B_x,A_x)>\delta$. Define two subsets of $X$:
$$
\mathsf A=\{x:e\in A_x\}\text{ \ \ and \ \ }\mathsf B=\{x:e\in B_x\}.
$$
By continuity of the assignment $x\mapsto \CT'_x$, and since one can determine whether $e\in A_x$ (and likewise, whether $e\in B_x$) from the symbolic representation of $\CT'_x$ (which is a subshift over the alphabet $\Delta'=\mathcal S'\cup\{0\}$) by viewing the symbols in a bounded horizon $\bigcup_{S\in\CS'}S^{- 1}$ (independent of $x$) around $e$, both sets $\mathsf A$ and $\mathsf B$ are clopen (and obviously disjoint). The notation $A_x,\,B_x$ is now consistent with \eqref{kiki} and \eqref{kiko} for the sets $\mathsf A,\,\mathsf B$, respectively, hence, by Proposition \ref{prop} (1) (the last equality) we obtain $\underline D(\mathsf B,\mathsf A)\ge\delta>0$. The comparison property of the action on $X$ implies that $\mathsf A\preccurlyeq\mathsf B$.

Since we prefer to work with a symbolic system in place of the \zd\ system $X$, we will now build a symbolic factor $\hat X$ of $X$ carrying the minimum information needed to restore both the dynamical quasitiling $x\mapsto \CT_x'$ and the subequivalence $\mathsf A\preccurlyeq\mathsf B$. Let $\{\mathsf A_1,\mathsf A_2,\dots,\mathsf A_k\}$ and
$g_1,g_2,\dots, g_k$ be, respectively, the clopen partition of $\mathsf A$ and the associated elements of $G$ as in the definition of subequivalence. We define a factor map $\pi:X\to\hat X\subset{\hat\Delta}^G$, where
$\hat\Delta=\Delta'\times\{\mathsf 0,\mathsf 1,\dots,\mathsf k,\mathsf{k+1}\}$,
as follows:
$$
(\pi(x))_g =
\begin{cases}
((\CT_x')_g,\mathsf i)&\text{ \ \ if \ }g(x)\in \mathsf A_i, \ \ i=1,2,\dots,k\\
((\CT_x')_g,\mathsf{k+1})&\text{ \ \ if \ }g(x)\in\mathsf B\\
((\CT_x')_g,\mathsf{0})&\text{ \ \ if \ }g(x)\notin\mathsf A\cup\mathsf B.\\
\end{cases}
$$
Denote by $\hat{\mathsf A}_i=[\cdot,\mathsf i]$, $\hat{\mathsf A}=\bigcup_{i=1}^k[\cdot,\mathsf i]$ and $\hat{\mathsf B}=[\cdot,\mathsf{k+ 1}]$. Clearly, $\pi^{-1}(\hat{\mathsf A})=\mathsf A$, $\pi^{-1}(\hat{\mathsf A}_i)=\mathsf A_i$ ($i=1,2,\dots,k$) and $\pi^{-1}(\hat{\mathsf B})=\mathsf B$, which easily implies that $\hat{\mathsf A}\preccurlyeq\hat{\mathsf B}$ in the subshift $\hat X$, and the subequivalence involves the same elements $g_1,g_2,\dots, g_k$. Also for any $\hat x\in \hat X$ all quasitilings $\CT'_x$ with $x\in\pi^{-1}(\hat x)$ coincide. Hence, the subshift $\hat X$ admits a dynamical quasitiling $\hat x\mapsto\CT'_{\hat x}$, where $\CT'_{\hat x}=\CT'_x$ for any $x\in\pi^{-1}(\hat x)$.

By Theorem \ref{tutka} (1) (and its proof), there exists a family of injections $\tilde\varphi_{\hat x}:\hat A_{\hat x}\to \hat B_{\hat x}$ indexed by $\hat x\in \hat X$ (according to our convention, $\hat A_{\hat x}=\{g:g(\hat x)\in\hat{\mathsf A}\}$, $\hat B_{\hat x}=\{g:g(\hat x)\in\hat{\mathsf B}\}$), determined by a block code $\Xi:\hat\Delta^F\to E$,
where $E=\{g_1,g_2,\dots, g_k\}$ (and $F$ is a finite coding horizon). As easily verified, if $\hat x=\pi(x)$ then $\hat A_{\hat x} = A_x$ and $\hat B_{\hat x}=B_x$, thus, for any $x\in X$ we can define injections $\tilde\varphi_x:A_x\to B_x$ simply as $\tilde\varphi_{\hat x}$.

Now we are in a position to modify the quasitilings $\CT'_x$. Given $x\in X$, we define a transformation of the tiles $Sc\in\CT'_x$ as follows:
$$
\Phi_x(Sc)=Sc\cup\tilde\varphi_x^{-1}(B_Sc)\subset Sc\cup A_x
$$
(recall that $B_Sc$ is a part of the set $B_x$, so its preimage by $\tilde\varphi_x$ is a part of $A_x$). We will call the set $\tilde\varphi_x^{-1}(B_Sc)$ the \emph{added set}.
We define the center of the new tile $\Phi_x(Sc)$ as $c$. The shape of the new tile equals
$$
\Phi_x(Sc)c^{-1}=S\cup\tilde\varphi_x^{-1}(B_Sc)c^{-1}.
$$
Note that
$$
\tilde\varphi_x^{-1}(B_Sc)c^{-1}\subset E^{-1} (B_Sc)c^{-1}\subset E^{-1}S,
$$
which is a finite set. Since $\mathcal S'$ is finite, the set $\mathcal S$ of all new shapes is also finite. As the quasitiling $\CT'_x$ is disjoint, $\tilde\varphi_x$ restricted to $A_x$ is injective, and the image of $A_x$ is contained in $B_x=\bigcup_{Sc\in\CT'_x}B_Sc$, it is clear that the new quasitiling
$$
\CT_x=\{\Phi_x(Sc):Sc\in\CT'_x\}
$$
is a tiling (disjoint and covering $G$ completely).

Further, for any tile $Sc$ of $\CT'_x$ the added set $\tilde\varphi_x^{-1}(B_Sc)$
has cardinality at most $|B_S|=i_S<\frac{\eps}{2|K|}|S|$. Thus
$$
|K\Phi_x(Sc)|\le |KSc| + |K|\cdot \frac{\eps}{2|K|}|S| = |KS|+\frac\eps2|S|.
$$
We can assume (at the beginning of the proof) that $e\in K$, and then $(K,\frac\eps2)$-invariance of $S$ is equivalent to the inequality $|KS|<(1+\frac\eps2)|S|$. Thus
$$
|K\Phi_x(Sc)|< (1+\eps)|S|\le (1+\eps)|\Phi_x(Sc)|,
$$
and so $\Phi_x(Sc)$ is $(K,\eps)$-invariant. Summarizing, we have constructed a mapping $x\mapsto\CT_x$ into tilings with a finite set $\mathcal S$ of $(K,\eps)$-invariant shapes.

\smallskip
We need to show that the assignment $x\mapsto\CT_x$ is a dynamical tiling, i.e., a \tl\ factor map from $X$ to a subshift over the alphabet $\Delta = \mathcal S\cup\{0\}$. Of course, it suffices to show that $x\mapsto\CT_x$ ``factors through'' $\hat X$, i.e., that
$\CT_x$ depends in fact on $\hat x=\pi(x)$ and the dependence is via a block code.
To do so, we can use the criterion \eqref{clr}, i.e., we need to indicate a finite set $J\subset G$, such that for any $x_1,x_2\in X$ and $g\in G$,
\begin{equation}\label{equ}
\hat x_1|_{Jg}=\hat x_2|_{Jg} \implies (\CT_{x_1})_g=(\CT_{x_2})_g,
\end{equation}
where $\hat x_1=\pi(x_1)$ and $\hat x_2=\pi(x_2)$.

We claim that the set $J=\{e\}\cup FE^{-1}R$ is good, where $F$ is the finite coding horizon of $\Xi$ and $R=\bigcup_{S\in\CS'}S$. In order to verify this claim assume that with so defined $J$ the left hand side of \eqref{equ} holds for some $x_1,x_2\in X$ and $g\in G$. Since $g\in Jg$, and the first entries of the pairs which constitute the symbols $(\hat x_1)_g$ and $(\hat x_2)_g$ equal $(\CT'_{x_1})_g$ and $(\CT'_{x_2})_g$, respectively, we have $(\CT'_{x_1})_g=(\CT'_{x_2})_g$. If this common entry is $0$ then $g$ is not a center of any tile in neither $\CT'_x$ nor $\CT'_{x'}$, and then $g$ is not a center of any tile in neither $\CT_{x_1}$ nor $\CT_{x_2}$, i.e., $(\CT_{x_1})_g=(\CT_{x_2})_g=0$. If the common entry is some $S\in\mathcal S'$ then we know that $g=c$ is a center of some tile in both $\CT_{x_1}$ and $\CT_{x_2}$, moreover the shapes of these tiles have the common part $S$ and may differ only in having different added sets. The added sets equal $\tilde\varphi_{x_1}^{-1}(B_Sc)c^{-1}$ and $\tilde\varphi_{x_2}^{-1}(B_Sc)c^{-1}$, respectively.
Since we can replace the subscripts $x_1,x_2$ correspondingly by $\hat x_1,\hat x_2$, we just need to show that
$$
\tilde\varphi_{\hat x_1}^{-1}(B_Sc)=\tilde\varphi_{\hat x_2}^{-1}(B_Sc).
$$
Since $FE^{-1}Rc\subset Jg$, the left hand side of \eqref{equ} implies $\hat x_1|_{FE^{-1}Rc}=\hat x_2|_{FE^{-1}Rc}$. Recall that the family $\{\tilde\varphi_{\hat x}\}_{\hat x\in\hat X}$ is determined by a block code with coding horizon $F$. We deduce that
$\tilde\varphi_{\hat x_1}$ agrees with $\tilde\varphi_{\hat x_2}$ on the set $E^{-1}Rc$, which contains $E^{-1}Sc$, which contains $E^{-1}B_Sc$. But $E^{-1}B_Sc$ contains the union
$\tilde\varphi_{\hat x_1}^{-1}(B_Sc)\cup \tilde\varphi_{\hat x_2}^{-1}(B_Sc)$. Since
$\tilde\varphi_{\hat x_1}$ and $\tilde\varphi_{\hat x_2}$ agree on this union, we conclude that $\tilde\varphi_{\hat x_1}^{-1}(B_Sc)=\tilde\varphi_{\hat x_2}^{-1}(B_Sc)$, which ends the proof of the first implication.
\medskip

Now we shall show how dynamical tilings can be used to prove comparison. Assume that $G$ acts on a \zd\ compact metric space $X$ (we do not assume freeness of the action) and that for every finite $K\subset G$ and $\eps>0$ this action admits a dynamical tiling with $(K,\eps)$-invariant shapes. Let $\mathsf A,\mathsf B$ be disjoint clopen subsets of $X$ such that $\mu(\mathsf B)>\mu(\mathsf A)$ for all \im s $\mu$ on $X$. We need to show that $\mathsf A\preccurlyeq\mathsf B$.

As we have observed in Remark \ref{from0}, the infimum $\inf_{\mu\in\M_G(X)}(\mu(\mathsf B)-\mu(\mathsf A))$ is positive. Proposition \ref{prop} (1) implies that
$$
\underline D(\mathsf B,\mathsf A)\ge6\eps,
$$
for some $\eps>0$. By Lemma \ref{bbb}, there exists a finite set $F\subset G$ satisfying, for every $x\in X$, $\underline D_F(B_x,A_x)\ge5\eps$. By the tiling property, there exists a dynamical tiling $x\mapsto\CT_x$ with some set of shapes $\mathcal S$ such that each shape $S\in\mathcal S$ is $(F,\eps)$-\inv. Lemma~\ref{bdc} implies that for every $S\in\mathcal S$ and $x\in X$, we have
$$
\underline D_S(B_x,A_x)\ge \underline D_F(B_x,A_x)-4\eps>0,
$$
which yields $|A_xg^{-1}\cap S|<|B_xg^{-1}\cap S|$ for every $g\in G$.

Similarly, as in the preceding proof, we will build a symbolic factor $\hat X$ of $X$ carrying the minimum information about both the sets $\mathsf A,\mathsf B$ and the dynamical tiling. Namely, we define a factor map $\pi:X\to \hat X\subset {\hat\Delta}^G$, where this time $\hat\Delta = \{\mathsf 0,\mathsf 1,\mathsf 2\}\times\Delta$ (as usually, $\Delta=\mathcal S\cup\{0\}$ is the alphabet in the symbolic representation of the dynamical tiling), as follows
$$
(\pi(x))_g=
\begin{cases}
(\mathsf1,S)& \ \text{ if \ \ }g\in A_x, Sg\in\CT_x\\
(\mathsf2,S)& \ \text{ if \ \ }g\in B_x, Sg\in\CT_x\\
(\mathsf0,S)& \ \text{ if \ \ }g\notin A_x\cup B_x, Sg\in\CT_x\\
(\mathsf1,0)& \ \text{ if \ \ }g\in A_x, Sg\notin\CT_x\\
(\mathsf2,0)& \ \text{ if \ \ }g\in B_x, Sg\notin\CT_x\\
(\mathsf0,0)& \ \text{ if \ \ }g\notin A_x\cup B_x, Sg\notin\CT_x.
\end{cases}
$$
As before, the subshift $\hat X$ admits a dynamical tiling $\hat x\mapsto\CT_{\hat x}$, where $\CT_{\hat x}=\CT_x$ for any $x\in\pi^{-1}(\hat x)$.
Denote $\hat{\mathsf A} = [\mathsf 1,\cdot]$ and $\hat{\mathsf B} = [\mathsf 2,\cdot]$.
We have $\mathsf A=\pi^{-1}(\hat{\mathsf A})$ and $\mathsf B=\pi^{-1}(\hat{\mathsf B})$.

Thus it suffices to show that $\hat{\mathsf A}\preccurlyeq\hat{\mathsf B}$ in $\hat X$.
By Theorem \ref{tutka} (1), the proof will be ended once we will have constructed a family of injections $\tilde\varphi_{\hat x}:\hat A_{\hat x}\to \hat B_{\hat x}$ indexed by $\hat x\in\hat X$ and determined by a block code.

By the definition of $\pi$ we have, that if $\hat x=\pi(x)$ then $A_x=\hat A_{\hat x}$ and $B_x=\hat B_{\hat x}$, and the inequality $|A_xg^{-1}\cap S|<|B_xg^{-1}\cap S|$ translates to $|\hat A_{\hat x}g^{-1}\cap S|<|\hat B_{\hat x}g^{-1}\cap S|$ (for each $\hat x\in \hat X$, $S\in\mathcal S$ and $g\in G$). In other words, in every block $g(\hat x)|_S$ there are more symbols $\mathsf 2$ than $\mathsf 1$ (we just consider the first entries in the pairs which constitute the symbols). Since $\mathcal S$ is finite and for each $S\in\mathcal S$ there are only finitely many blocks $\mathbf B\in{\hat\Delta}^S$, we have globally a finite number of possible blocks $\mathbf B$ appearing in the role $g(\hat x)|_S$ (with $\hat x\in \hat X$, $g\in G$ and $S\in\mathcal S$). For every block $\mathbf B$ in this finite collection we select arbitrarily an injection $\varphi_{\mathbf B}:\{s\in S:\mathbf B(s)=(\mathsf 1,\cdot)\}\to\{s\in S:\mathbf B(s)=(\mathsf 2,\cdot)\}$, where $S$ is the domain of $\mathbf B$.

Fix some $\hat x\in \hat X$ and $a\in\hat A_{\hat x}$. Let $Sc$ be the tile of $\CT_{\hat x}$ containing $a$ and let $\mathbf B = c(\hat x)|_S$. We define
$$
\tilde\varphi_{\hat x}(a) = \varphi_{\mathbf B}(ac^{-1})c.
$$
Since $\mathbf B(ac^{-1})=\hat x_a =(\mathsf 1,\cdot)$, $\varphi_{\mathbf B}(ac^{-1})$ is defined and satisfies $\mathbf B (\varphi_{\mathbf B}(ac^{-1}))=(\mathsf 2,\cdot)$, and thus
$\hat x_{\varphi_{\mathbf B}(ac^{-1})c}=(\mathsf 2,\cdot)$, i.e., $\tilde\varphi_{\hat x}(a)\in\hat B_{\hat x}$. Notice that $\tilde\varphi_{\hat x}(a)$ belongs to the same tile of $\CT_{\hat x}$ as $a$. Injectivity of so defined $\tilde\varphi_{\hat x}$ is very easy. Consider $a_1\neq a_2\in \hat A_{\hat x}$. If both elements belong to the same tile of $\CT_{\hat x}$, then their images are different by injectivity of $\varphi_{\mathbf B}$, where $\mathbf B= c(\hat x)|_S$. If they belong to different tiles, their images also belong to different tiles, hence are different. The last thing to check is the condition \eqref{clr}, which will establish that the family $\{\tilde\varphi_{\hat x}\}_{\hat x\in\hat X}$ is determined by a block code. We claim that the horizon
$E=\bigcup_{S\in\mathcal S}SS^{-1}$ is good. Indeed, suppose, for some $\hat x_1,\hat x_2\in \hat X$ and $a_1\in\hat A_{\hat x_1},a_2\in\hat A_{\hat x_2}$, that
\begin{equation} \label{add}
a_1(\hat x_1)|_E = a_2(\hat x_2)|_E.
\end{equation}
Let $S_1c$ be the (unique) tile of $\CT_{a_1(\hat x_1)}$ containing the unity $e$. Then the second entry of the pair constituting the symbol $(a_1 (\hat x_1))_c$ equals $S_1$.
Since $c\in\bigcup_{S\in\mathcal S}S^{-1}\subset E$, by \eqref{add} we obtain that the second entry of the symbol $(a_2 (\hat x_2))_c$ also equals $S_1$, so that $S_1c$ is the (unique) tile of $\CT_{a_2(\hat x_2)}$ containing $e$. Further, since $S_1c\subset E$, by \eqref{add} we have $a_1(\hat x_1)|_{S_1c}= a_2(\hat x_2)|_{S_1c}$ and hence $ca_1(\hat x_1)|_{S_1}= ca_2(\hat x_2)|_{S_1}$. That is, these two restrictions define the same block $\mathbf B\in {\hat\Delta}^{S_1}$. This implies that both $\tilde\varphi_{\hat x_1}(a_1)$ and $\tilde\varphi_{\hat x_2}(a_2)$ are defined with the help of the same injection
$\varphi_\mathbf B$, and
$$
\tilde\varphi_{\hat x_1}(a_1) = \varphi_{\mathbf B}(a_1c_1^{-1})c_1, \ \ \ \ \tilde\varphi_{\hat x_2}(a_2) = \varphi_{\mathbf B}(a_2c_2^{-1})c_2,
$$
where $c_1$ is the center of the tile of $\CT_{\hat x_1}$ containing $a_1$ and $c_2$
is the center of the tile of $\CT_{\hat x_2}$ containing $a_2$. By shift equivariance of the dynamical tiling, we easily see that $c_1=ca_1$ and $c_2=ca_2$, which yields
$$
\tilde\varphi_{\hat x_1}(a_1)a_1^{-1} = \varphi_{\mathbf B}(c^{-1})c=\tilde\varphi_{\hat x_2}(a_2)a_2^{-1}.
$$
This is exactly the condition $\eqref{clr}$ and the proof is finished.
\end{proof}

Combining Theorem \ref{main} with Theorem \ref{dt} we obtain:

\begin{cor}
If $G$ is a subexponential group then every action of $G$ on a \zd\ compact metric space has the tiling property.
\end{cor}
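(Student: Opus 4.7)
The proof plan is to chain the two headline results of the paper. First, Theorem \ref{main} asserts that every subexponential group $G$ has the comparison property, which by Definition \ref{defcom} means that every action of $G$ on a zero-dimensional compact metric space admits comparison. Second, Theorem \ref{dt} establishes that, for a free action of any countable amenable group on a zero-dimensional compact metric space, admitting comparison is equivalent to having the tiling property. So for any free action of $G$ on such a space $X$, the ``admits comparison $\Rightarrow$ tiling property'' direction of Theorem \ref{dt} delivers the conclusion immediately, with no further work.

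I expect this direct combination to furnish the corollary in the main (free) case without any new ideas. Concretely, given $K\subset G$ finite and $\eps>0$, comparison of the action is what is needed to run the construction in the first half of the proof of Theorem \ref{dt}: start from the $(K,\frac\eps 2)$-invariant disjoint $(1-\delta)$-covering dynamical quasitiling produced by Theorem \ref{quasitilings}, enlarge each tile by adjoining a small set of points pulled back through an injection supplied by comparison, and check that the resulting map $x\mapsto\CT_x$ is a factor map into a subshift of tilings with $(K,\eps)$-invariant shapes. No step of that argument requires anything beyond comparison (which we already have by Theorem \ref{main}) plus freeness (used only to invoke Theorem \ref{quasitilings}).

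The main obstacle, if one reads the corollary in its most literal form as covering \emph{every} action rather than every free action, is precisely this freeness assumption: Theorem \ref{dt}'s forward direction explicitly appeals to Theorem \ref{quasitilings} from \cite{DH}, whose output of a disjoint dynamical quasitiling with good F\o lner properties genuinely requires the action to be free. A natural workaround is to form the product action $G\curvearrowright X\times Z$ with a free minimal $G$-action $Z$ on a Cantor set (such $Z$ exists for any countable group by Gao--Jackson--Seward), apply the free case to $X\times Z$ to obtain a dynamical tiling $(x,z)\mapsto\CT_{(x,z)}$, and then attempt to descend it to $X$. The descent is the delicate part, since a dynamical tiling is required to be a continuous shift-equivariant factor of $X$ alone; one would need to verify (or arrange) that the constructed tiling of $X\times Z$ does not depend on the second coordinate, which is not automatic. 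For the applications envisaged in the paper, however, the corollary is used only via its consequences for free actions (for example to build symbolic extensions), so the free case is the essential content and is already settled by the straightforward chain above.
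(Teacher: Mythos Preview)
Your core argument is exactly the paper's: the proof is literally the sentence ``Combining Theorem \ref{main} with Theorem \ref{dt}'' and nothing more. You have correctly identified that this chain delivers the tiling property only for \emph{free} actions, since the forward implication of Theorem \ref{dt} uses Theorem \ref{quasitilings}, which requires freeness.

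Your concern about the literal phrase ``every action'' is well-founded, but it reflects an imprecision in the corollary's wording rather than a gap you are expected to fill. The paper itself remarks immediately afterward that ``the tiling property cannot be extended (without modifying the definition) to non-free actions,'' confirming that the intended scope is free actions. So your product-with-a-free-action workaround and the attendant descent problem are unnecessary here: the paper makes no claim for non-free actions and provides no argument for them, and neither should you. Simply stating the combination of the two theorems, with the freeness hypothesis understood, is the complete proof.
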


We conclude the paper with a question. Let us say that a countable amenable group $G$ has the \emph{tiling property} if any free action of $G$ on a \zd\ compact metric space has the tiling property as in Definition \ref{dqt}. In such case, by Theorem~\ref{dt} any free action on a \zd\ compact metric space admits comparison. It is easy to see that the tiling property cannot be extended (without modifying the definition) to non-free actions. However, there are \emph{a priori} no obvious reasons why the comparison property could not be extended. Thus the following question is very natural:

\begin{ques}
Is it true that if $G$ has the tiling property (which depends on free actions only) then it also has the comparison property (which depends on all actions; of course we restrict our attention to \zd\ compact metric spaces).
\end{ques}

\iffalse

\bibliographystyle{amsplain}

\bibliography{DZh}

\fi

%\iffalse

%\fi

\end{document}